\begin{document}
\newtheorem{theorem}{Theorem}[section]
\newtheorem{lemma}[theorem]{Lemma}
\newtheorem{corollary}[theorem]{Corollary}
\newtheorem{proposition}[theorem]{Proposition}
\theoremstyle{definition}
\newtheorem{definition}[theorem]{Definition}
\newtheorem{example}[theorem]{Example}
\newtheorem{remark}[theorem]{Remark}
\newtheorem{pdef}[theorem]{Proposition-Definition}
\newtheorem{condition}[theorem]{Condition}
\renewcommand{\labelenumi}{{\rm(\alph{enumi})}}
\renewcommand{\theenumi}{\alph{enumi}}
\baselineskip=14pt

\newcommand {\emptycomment}[1]{} 

\newcommand{\nc}{\newcommand}
\newcommand{\delete}[1]{}

\nc{\todo}[1]{\tred{To do:} #1}

\nc{\tred}[1]{\textcolor{red}{#1}}
\nc{\tblue}[1]{\textcolor{blue}{#1}}
\nc{\tgreen}[1]{\textcolor{green}{#1}}
\nc{\tpurple}[1]{\textcolor{purple}{#1}}
\nc{\tgray}[1]{\textcolor{gray}{#1}}
\nc{\torg}[1]{\textcolor{orange}{#1}}
\nc{\tmag}[1]{\textcolor{magenta}}
\nc{\btred}[1]{\textcolor{red}{\bf #1}}
\nc{\btblue}[1]{\textcolor{blue}{\bf #1}}
\nc{\btgreen}[1]{\textcolor{green}{\bf #1}}
\nc{\btpurple}[1]{\textcolor{purple}{\bf #1}}

	\nc{\mlabel}[1]{\label{#1}}  
	\nc{\mcite}[1]{\cite{#1}}  
	\nc{\mref}[1]{\ref{#1}}  
	\nc{\meqref}[1]{\eqref{#1}}  
	\nc{\mbibitem}[1]{\bibitem{#1}} 

\delete{
	\nc{\mlabel}[1]{\label{#1}  
		{ {\small\tgreen{\tt{{\ }(#1)}}}}}
	\nc{\mcite}[1]{\cite{#1}{\small{\tt{{\ }(#1)}}}}  
	\nc{\mref}[1]{\ref{#1}{\small{\tred{\tt{{\ }(#1)}}}}}  
	\nc{\meqref}[1]{\eqref{#1}{{\tt{{\ }(#1)}}}}  
	\nc{\mbibitem}[1]{\bibitem[\bf #1]{#1}} 
}

\nc{\cm}[1]{\textcolor{red}{Chengming:#1}}
\nc{\yy}[1]{\textcolor{blue}{Yanyong: #1}}
\nc{\zy}[1]{\textcolor{yellow}{Zhongyin: #1}}
\nc{\li}[1]{\textcolor{purple}{#1}}
\nc{\lir}[1]{\textcolor{purple}{Li:#1}}


\nc{\tforall}{\ \ \text{for all }}
\nc{\hatot}{\,\widehat{\otimes} \,}
\nc{\complete}{completed\xspace}
\nc{\wdhat}[1]{\widehat{#1}}

\nc{\ts}{\mathfrak{p}}
\nc{\mts}{c_{(i)}\ot d_{(j)}}

\nc{\NA}{{\bf NA}}
\nc{\LA}{{\bf Lie}}
\nc{\CLA}{{\bf CLA}}

\nc{\cybe}{CYBE\xspace}
\nc{\nybe}{NYBE\xspace}
\nc{\ccybe}{CCYBE\xspace}

\nc{\ndend}{pre-Novikov\xspace}
\nc{\calb}{\mathcal{B}}
\nc{\rk}{\mathrm{r}}
\newcommand{\g}{\mathfrak g}
\newcommand{\h}{\mathfrak h}
\newcommand{\pf}{\noindent{$Proof$.}\ }
\newcommand{\frkg}{\mathfrak g}
\newcommand{\frkh}{\mathfrak h}
\newcommand{\Id}{\rm{Id}}
\newcommand{\gl}{\mathfrak {gl}}
\newcommand{\ad}{\mathrm{ad}}
\newcommand{\add}{\frka\frkd}
\newcommand{\frka}{\mathfrak a}
\newcommand{\frkb}{\mathfrak b}
\newcommand{\frkc}{\mathfrak c}
\newcommand{\frkd}{\mathfrak d}
\newcommand {\comment}[1]{{\marginpar{*}\scriptsize\textbf{Comments:} #1}}


\nc{\disp}[1]{\displaystyle{#1}}
\nc{\bin}[2]{ (_{\stackrel{\scs{#1}}{\scs{#2}}})}  
\nc{\binc}[2]{ \left (\!\! \begin{array}{c} \scs{#1}\\
    \scs{#2} \end{array}\!\! \right )}  
\nc{\bincc}[2]{  \left ( {\scs{#1} \atop
    \vspace{-.5cm}\scs{#2}} \right )}  
\nc{\ot}{\otimes}
\nc{\sot}{{\scriptstyle{\ot}}}
\nc{\otm}{\overline{\ot}}
\nc{\ola}[1]{\stackrel{#1}{\la}}

\nc{\scs}[1]{\scriptstyle{#1}} \nc{\mrm}[1]{{\rm #1}}

\nc{\dirlim}{\displaystyle{\lim_{\longrightarrow}}\,}
\nc{\invlim}{\displaystyle{\lim_{\longleftarrow}}\,}

\nc{\bfk}{{\bf k}} \nc{\bfone}{{\bf 1}}
\nc{\rpr}{\circ}
\nc{\dpr}{{\tiny\diamond}}
\nc{\rprpm}{{\rpr}}

\nc{\mmbox}[1]{\mbox{\ #1\ }} \nc{\ann}{\mrm{ann}}
\nc{\Aut}{\mrm{Aut}} \nc{\can}{\mrm{can}}
\nc{\twoalg}{{two-sided algebra}\xspace}
\nc{\colim}{\mrm{colim}}
\nc{\Cont}{\mrm{Cont}} \nc{\rchar}{\mrm{char}}
\nc{\cok}{\mrm{coker}} \nc{\dtf}{{R-{\rm tf}}} \nc{\dtor}{{R-{\rm
tor}}}
\renewcommand{\det}{\mrm{det}}
\nc{\depth}{{\mrm d}}
\nc{\End}{\mrm{End}} \nc{\Ext}{\mrm{Ext}}
\nc{\Fil}{\mrm{Fil}} \nc{\Frob}{\mrm{Frob}} \nc{\Gal}{\mrm{Gal}}
\nc{\GL}{\mrm{GL}} \nc{\Hom}{\mrm{Hom}} \nc{\hsr}{\mrm{H}}
\nc{\hpol}{\mrm{HP}}  \nc{\id}{\mrm{id}} \nc{\im}{\mrm{im}}

\nc{\incl}{\mrm{incl}} \nc{\length}{\mrm{length}}
\nc{\LR}{\mrm{LR}} \nc{\mchar}{\rm char} \nc{\NC}{\mrm{NC}}
\nc{\mpart}{\mrm{part}} \nc{\pl}{\mrm{PL}}
\nc{\ql}{{\QQ_\ell}} \nc{\qp}{{\QQ_p}}
\nc{\rank}{\mrm{rank}} \nc{\rba}{\rm{RBA }} \nc{\rbas}{\rm{RBAs }}
\nc{\rbpl}{\mrm{RBPL}}
\nc{\rbw}{\rm{RBW }} \nc{\rbws}{\rm{RBWs }} \nc{\rcot}{\mrm{cot}}
\nc{\rest}{\rm{controlled}\xspace}
\nc{\rdef}{\mrm{def}} \nc{\rdiv}{{\rm div}} \nc{\rtf}{{\rm tf}}
\nc{\rtor}{{\rm tor}} \nc{\res}{\mrm{res}} \nc{\SL}{\mrm{SL}}
\nc{\Spec}{\mrm{Spec}} \nc{\tor}{\mrm{tor}} \nc{\Tr}{\mrm{Tr}}
\nc{\mtr}{\mrm{sk}}

\nc{\ab}{\mathbf{Ab}} \nc{\Alg}{\mathbf{Alg}}

\nc{\BA}{{\mathbb A}} \nc{\CC}{{\mathbb C}} \nc{\DD}{{\mathbb D}}
\nc{\EE}{{\mathbb E}} \nc{\FF}{{\mathbb F}} \nc{\GG}{{\mathbb G}}
\nc{\HH}{{\mathbb H}} \nc{\LL}{{\mathbb L}} \nc{\NN}{{\mathbb N}}
\nc{\QQ}{{\mathbb Q}} \nc{\RR}{{\mathbb R}} \nc{\BS}{{\mathbb{S}}} \nc{\TT}{{\mathbb T}}
\nc{\VV}{{\mathbb V}} \nc{\ZZ}{{\mathbb Z}}


\nc{\calao}{{\mathcal A}} \nc{\cala}{{\mathcal A}}
\nc{\calc}{{\mathcal C}} \nc{\cald}{{\mathcal D}}
\nc{\cale}{{\mathcal E}} \nc{\calf}{{\mathcal F}}
\nc{\calfr}{{{\mathcal F}^{\,r}}} \nc{\calfo}{{\mathcal F}^0}
\nc{\calfro}{{\mathcal F}^{\,r,0}} \nc{\oF}{\overline{F}}
\nc{\calg}{{\mathcal G}} \nc{\calh}{{\mathcal H}}
\nc{\cali}{{\mathcal I}} \nc{\calj}{{\mathcal J}}
\nc{\call}{{\mathcal L}} \nc{\calm}{{\mathcal M}}
\nc{\caln}{{\mathcal N}} \nc{\calo}{{\mathcal O}}
\nc{\calp}{{\mathcal P}} \nc{\calq}{{\mathcal Q}} \nc{\calr}{{\mathcal R}}
\nc{\calt}{{\mathcal T}} \nc{\caltr}{{\mathcal T}^{\,r}}
\nc{\calu}{{\mathcal U}} \nc{\calv}{{\mathcal V}}
\nc{\calw}{{\mathcal W}} \nc{\calx}{{\mathcal X}}
\nc{\CA}{\mathcal{A}}

\nc{\fraka}{{\mathfrak a}} \nc{\frakB}{{\mathfrak B}}
\nc{\frakb}{{\mathfrak b}} \nc{\frakd}{{\mathfrak d}}
\nc{\oD}{\overline{D}}
\nc{\frakF}{{\mathfrak F}} \nc{\frakg}{{\mathfrak g}}
\nc{\frakm}{{\mathfrak m}} \nc{\frakM}{{\mathfrak M}}
\nc{\frakMo}{{\mathfrak M}^0} \nc{\frakp}{{\mathfrak p}}
\nc{\frakS}{{\mathfrak S}} \nc{\frakSo}{{\mathfrak S}^0}
\nc{\fraks}{{\mathfrak s}} \nc{\os}{\overline{\fraks}}
\nc{\frakT}{{\mathfrak T}}
\nc{\oT}{\overline{T}}
\nc{\frakX}{{\mathfrak X}} \nc{\frakXo}{{\mathfrak X}^0}
\nc{\frakx}{{\mathbf x}}
\nc{\frakTx}{\frakT}      
\nc{\frakTa}{\frakT^a}        
\nc{\frakTxo}{\frakTx^0}   
\nc{\caltao}{\calt^{a,0}}   
\nc{\ox}{\overline{\frakx}} \nc{\fraky}{{\mathfrak y}}
\nc{\frakz}{{\mathfrak z}} \nc{\oX}{\overline{X}}

\font\cyr=wncyr10


\title{Quasi-Frobenius Novikov algebras and Pre-Novikov bialgebras}

\author{Yue Li}
\address{School of Mathematics, Hangzhou Normal University,
Hangzhou, 311121, China}
\email{liyuee@stu.hznu.edu.cn}

\author{Yanyong Hong}
\address{School of Mathematics, Hangzhou Normal University,
Hangzhou, 311121, China}
\email{yyhong@hznu.edu.cn}

\subjclass[2010]{17A30, 17A60, 17B38, 17D25}
\keywords{Novikov algebra, pre-Novikov algebra, pre-Novikov bialgebra, Yang-Baxter equation, $\mathcal{O}$-operator}
\footnote {
Corresponding author: Y. Hong (yyhong@hznu.edu.cn).
}
\begin{abstract}
Pre-Novikov algebras and quasi-Frobenius Novikov algebras naturally appear in the theory of Novikov bialgebras. In this paper, we show that there is a natural pre-Novikov algebra structure associated to a quasi-Frobenius Novikov algebra. Then we introduce the definition of double constructions of quasi-Frobenius Novikov algebras associated to two pre-Novikov algebras and show that it is characterized by a pre-Novikov bialgebra. We also introduce the notion of pre-Novikov Yang-Baxter equation, whose symmetric solutions can produce pre-Novikov bialgebras. Moreover, the operator forms of pre-Novikov Yang-Baxter equation are also investigated.
\end{abstract}

\maketitle
\section{Introduction}
Novikov algebras appeared in the study of Hamiltonian
operators in the formal variational calculus~\mcite{GD1, GD2} and
Poisson brackets of hydrodynamic type~\mcite{BN}. It was also shown in \mcite{X1} that Novikov algebras correspond to a class of Lie conformal algebras, which describe the singular part of operator product expansion of chiral fields in conformal field
theory \mcite{K1}. Note that Novikov algebras are also an important subclass of pre-Lie algebras (also called left-symmetric algebras), which are closely related to many fields in mathematics and physics such as  affine manifolds and affine
structures on Lie groups \mcite{Ko},  convex homogeneous cones \mcite{V}, deformation of associative algebras \mcite{Ger} and vertex algebras \mcite{BK, BLP}.

The definitions of pre-Novikov algebras and quasi-Frobenius Novikov algebras naturally appeared in the study of Novikov bialgebras \cite{HBG}. It was shown in \cite{HBG} that there is a Novikov algebra associated to a pre-Novikov algebra and pre-Novikov algebras can produce skewsymmetric solutions of Novikov Yang-Baxter equation and hence Novikov bialgebras. Moreover, by \cite{XH}, pre-Novikov algebras correspond to a class of left-symmetric conformal algebras \cite{HL} and there are close relationships between pre-Novikov algebras and Zinbiel algebras with a derivation (see \cite{HBG, KMS}). Quasi-Frobenius Novikov algebras are a class of Novikov algebras with a special bilinear form, which are closely related with the skewsymmetric solutions of Novikov Yang-Baxter equation (see \cite{HBG}). Moreover, it was shown in \cite{HBG} that quasi-Frobenius Novikov algebras also correspond to a class of quasi-Frobenius infinite-dimensional Lie algebras.

As we know, in the case of associative algebras, there is a natural construction of Frobenius algebras called a double construction of Frobenius algebras (see \cite{Bai}). It was shown in \cite{Bai} that a double construction of Frobenius algebras is characterized by an antisymmetric infinitesimal bialgebra. Motivated by this, it is natural to consider double constructions of quasi-Frobenius Novikov algebras and the relationships with the bialgebra theory of pre-Novikov algebras. Note that pre-Novikov algebras are a class of L-dendriform algebras \cite{BLN} and the theory of L-dendriform bialgebras was developed in \cite{BLN, NB}. Therefore, it is also natural to investigate the bialgebra theory of pre-Novikov algebras.

In this paper, we introduce the definitions of pre-Novikov bialgebras and double constructions of quasi-Frobenius Novikov algebras associated to two pre-Novikov algebras and show that these two definitions are equivalent. We also investigate a special class of pre-Novikov bialgebras, which resembles the coboundary Lie bialgebras \cite{CP}. The notion of pre-Novikov Yang-Baxter equation is also introduced, whose symmetric solutions give pre-Novikov bialgebras. We introduce the $\mathcal{O}$-operator on a pre-Novikov algebra as an operator form of pre-Novikov Yang-Baxter equation. It gives a symmetric solution of pre-Novikov Yang-Baxter equation in a semi-direct product pre-Novikov algebra. It should be pointed out that although pre-Novikov algebras are L-dendriform algebras, their bialgebra theories are totally different, for example, skewsymmetric solutions of LD-equation give L-dendriform bialgebras, whereas symmetric solutions of pre-Novikov Yang-Baxter equation give pre-Novikov bialgebras.

This paper is organized as follows. In Section 2, we recall some basic facts about Novikov algebras, pre-Novikov algebras and quasi-Frobenius Novikov algebras. Moreover, we show that there is a pre-Novikov algebra associated to a quasi-Frobenius Novikov algebra. In Section 3, the definition of double constructions of quasi-Frobenius Novikov algebras associated to two pre-Novikov algebras is introduced and we show that it is equivalent to a special matched pair of Novikov algebras. Moreover, we introduce the definition of pre-Novikov bialgebras and prove that it can also characterize the double construction of quasi-Frobenius Novikov algebras associated to two pre-Novikov algebras. In Section 4, we introduce the definition of pre-Novikov Yang-Baxter equation and show that symmetric solutions of pre-Novikov Yang-Baxter equation can produce pre-Novikov bialgebras. Moreover, we investigate the operator forms of pre-Novikov Yang-Baxter equation. The definition of $\mathcal{O}$-operators on pre-Novikov algebras is introduced and we show that it produces a symmetric solution of pre-Novikov Yang-Baxter equation in a semi-direct product pre-Novikov algebra.

\smallskip
\noindent
{\bf Notations.}
Throughout this paper, we fix a base field ${\bf k}$ of characteristic $0$. All vector spaces and algebras are over $\bfk$. Unless otherwise stated, they  are assumed to be finite-dimensional even though many results still hold in the infinite-dimensional cases. The identity map is denoted by $\id$. Let $A$ be a vector space with a binary operation $\circ$.
Define linear maps
$L_{\circ}, R_{\circ}:A\rightarrow {\rm End}_{\bf k}(A)$ by
\begin{eqnarray*}
L_{\circ}(a)b:=a\circ b,\;\; R_{\circ}(a)b:=b\circ a, \;\;\;a, b\in A.
\end{eqnarray*}
Let
$$\tau:A\otimes A\rightarrow A\otimes A,\quad a\otimes b\mapsto b\otimes a,\;\;\;\; a,b\in A,$$
be the flip operator.

\delete{
Throughout this paper, all algebras are finite dimensional and over a field $\mathbf{k}$ of characteristic zero. We give some notation as follows. Let $V,W$ be vector spaces.\\
\hspace*{4mm}(a) Let $(V,\circ)$ be a vector space with a binary operation $\circ$. We define the left action and right action by $L_\circ :V\rightarrow \text{End}_{\bf k}(V)$ and $R_\circ :V\rightarrow \text{End}_{\bf k}(V)$ respectively,
$$L_\circ (x)y=x\circ y=R_\circ (y)x,\quad x,y\in V.$$
\hspace*{4mm}(b) The exchanging operator $\tau:V\otimes V\rightarrow V\otimes V $ is defined by
\begin{align*}
\tau(x\otimes y)=y\otimes x,\quad  x,y\in V.
\end{align*}}

\section{Preliminaries}
In this section, we recall some known facts about Novikov algebras and pre-Novikov algebras and show that there is a natural pre-Novikov algebra structure associated to a quasi-Frobenius Novikov algebra.

Recall that a {\bf Novikov algebra} is a vector space $A$ with a binary operation $\circ :A\otimes A \rightarrow A$ satisfying
\begin{align}
(a\circ b)\circ c-a\circ (b\circ c)&=(b\circ a)\circ c-b\circ (a\circ c),\\
(a\circ b)\circ c&=(a\circ c)\circ b,\quad  a,b,c\in A.
\end{align}
Denote it by $(A,\circ)$.

\begin{definition}\cite{O}
A {\bf representation} of a Novikov algebra $(A,\circ)$ is a triple $(V,l,r)$, where $V$ is a vector space and $l,r: A\rightarrow \text{End}_{\bf k}(V)$ are linear maps satisfying
\begin{align}
l(a\circ b-b\circ a)v&=l(a)l(b)v-l(b)l(a)v,\\
l(a)r(b)v-r(b)l(a)v&=r(a\circ b)v-r(b)r(a)v,\\
l(a\circ b)v&=r(b)l(a)v,\\
r(a)r(b)v&=r(b)r(a)v, \quad a,b\in A,\;\;v\in V.
\end{align}

\end{definition}

\begin{remark}
Obviously, $(A,L_{\circ},R_{\circ})$ is a representation of $(A,\circ)$, which is called the \textbf{adjoint representation} of $(A,\circ)$.
\end{remark}

Let $(A,\circ)$ be a vector space with a binary operation $\circ$ and $V$ be a vector space. For a linear map $\rho:A\rightarrow \text{End}_{\bf k}(V)$,
define a linear map $\rho^*: A\rightarrow \text{End}_{\bf k}(V^*)$ by
\begin{align}
\langle \rho^*(a)f,v\rangle&=-\langle f,\rho(a)v\rangle,\quad  a\in A, v\in V, f\in V^*,\label{e14}
\end{align}
where $\langle \cdot,\cdot\rangle$ is the usual pairing between  $V$ and  $V^*$.

\begin{proposition}\cite[Proposition 3.3]{HBG}
Let $(V,l,r)$ be a representation of a Novikov algebra $(A,\circ )$. Then $(V^*,l^*+r^*,-r^*)$ is also a representation of $(A,\circ).$
\end{proposition}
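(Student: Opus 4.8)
The plan is to set $L := l^*+r^*$ and $R := -r^*$ as linear maps $A \to \End_{\bf k}(V^*)$ and to verify, one by one, the four defining identities of a representation for the triple $(V^*,L,R)$. The only mechanism required is the pairing \eqref{e14}. From it I read off two transfer rules valid for all $a\in A$, $g\in V^*$, $v\in V$: namely $\langle L(a)g,v\rangle=-\langle g,(l(a)+r(a))v\rangle$ and $\langle R(a)g,v\rangle=\langle g,r(a)v\rangle$, the sign in the second disappearing because $R=-r^*$. Iterating these rules, any composite of $L$'s and $R$'s applied to $f$ and paired against $v$ turns into $f$ paired against the corresponding composite of $l$'s and $r$'s applied to $v$, but with the factors in reversed order; for example $\langle R(b)R(a)f,v\rangle=\langle f,r(a)r(b)v\rangle$ and $\langle L(a)L(b)f,v\rangle=\langle f,(l(b)+r(b))(l(a)+r(a))v\rangle$.

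First I would dispose of the fourth identity $R(a)R(b)=R(b)R(a)$: pairing with $v$ turns it into $r(a)r(b)=r(b)r(a)$, which is the fourth identity for $(V,l,r)$. Next, the third identity $L(a\circ b)=R(b)L(a)$ reduces, after pairing and cancelling the common sign, to the operator equation $l(a\circ b)+r(a\circ b)=l(a)r(b)+r(a)r(b)$ on $V$; I would obtain this by substituting the second identity to rewrite $l(a)r(b)$, using the fourth identity to cancel $r(a)r(b)$ against $r(b)r(a)$, and finally applying the third identity $r(b)l(a)=l(a\circ b)$. The second identity for $(V^*,L,R)$ reduces similarly to $l(a)r(b)-r(b)l(a)=r(a\circ b)-r(a)r(b)$, which is exactly the original second identity once $r(b)r(a)$ is replaced by $r(a)r(b)$ via the fourth.

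The genuinely delicate step will be the first identity, $L(a\circ b-b\circ a)=L(a)L(b)-L(b)L(a)$. After pairing with $v$ this becomes, on $V$, the requirement that $l(b)l(a)-l(a)l(b)$ account for the $l$-part and that the mixed terms $l(b)r(a)-l(a)r(b)+r(b)l(a)-r(a)l(b)$ equal $r(b\circ a)-r(a\circ b)$. The $l$-part is immediate from the first identity of $(V,l,r)$, but the mixed part forces me to regroup the four terms as $\bigl(l(b)r(a)-r(a)l(b)\bigr)-\bigl(l(a)r(b)-r(b)l(a)\bigr)$ and apply the second identity twice, with $a$ and $b$ interchanged, the leftover $r$-products again collapsing by the fourth identity. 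Thus the main obstacle is purely organizational: carrying the two signs introduced by the double dualization through the commutators and matching the mixed $l$--$r$ terms correctly; no new algebraic input beyond the four representation axioms (and the Novikov relations already built into them) is needed.
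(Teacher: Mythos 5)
Your proposal is correct: the two transfer rules you extract from Eq.~\eqref{e14} are the right ones (note the sign cancellation in $R=-r^*$), and each of the four dual identities reduces, exactly as you describe, to a combination of the four axioms for $(V,l,r)$ --- I checked in particular that your reduction of the third dual identity to $l(a\circ b)+r(a\circ b)=l(a)r(b)+r(a)r(b)$ and your two-fold use of axiom (2) plus axiom (4) for the first dual identity both close up. The paper itself offers no proof of this statement (it is quoted from \cite[Proposition 3.3]{HBG}), so there is nothing to diverge from; your direct verification by dualizing and reversing compositions is the standard argument one would expect there.
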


\begin{remark}
The adjoint representation $(A,L_{\circ},R_{\circ})$ of a Novikov algebra $(A,\circ)$ gives the representation $(A^*,L_{\circ}^*+R_{\circ}^*,-R_{\circ}^*)$.
\delete{ Let $(A,\circ )$ be a Novikov algebra and $(A,L_{\circ},R_{\circ})$ be the adjoint representation of $(A,\circ)$. Then by above Proposition, $(A^*,L_{\circ}^*+R_{\circ}^*,-R_{\circ}^*)$ is also a representation of $(A,\circ )$.}
\end{remark}

\begin{definition}\label{pNov}\cite{HBG}
Let $A$ be a vector space. A {\bf pre-Novikov algebra} is a triple $(A,\vartriangleleft,\vartriangleright)$, where $\vartriangleleft,\vartriangleright: A\otimes A\rightarrow A$ are binary operations satisfying
\begin{align}
a\vartriangleright (b\vartriangleright c)&=(a\circ b)\vartriangleright c+b\vartriangleright(a\vartriangleright c)-(b\circ a)\vartriangleright c,\\
a\vartriangleright(b\vartriangleleft c)&=(a\vartriangleright b)\vartriangleleft c+b\vartriangleleft(a\circ c)-(b\vartriangleleft a)\vartriangleleft c,\\
(a\circ b)\vartriangleright c&=(a\vartriangleright c)\vartriangleleft b,\\
(a\vartriangleleft b)\vartriangleleft c&=(a\vartriangleleft c)\vartriangleleft b,  \quad a,b,c\in A,
\end{align}
where $a\circ b=a\vartriangleleft b+a\vartriangleright b$.
\end{definition}

\begin{proposition}\cite[Proposition 3.33]{HBG}\label{an}
Let $(A,\vartriangleleft,\vartriangleright)$ be a pre-Novikov algebra. The binary operation
\begin{align}
a\circ b:=a\vartriangleleft b+a\vartriangleright b, \quad  a,b\in A, \label{e13}
\end{align}
defines a Novikov algebra, which is called the \textbf{associated Novikov algebra} of $(A,\vartriangleleft,\vartriangleright)$. Furthermore, $(A,L_{\vartriangleright},R_{\vartriangleleft})$ is a representation of $(A,\circ)$. Conversely, let $A$ be a vector space equipped with binary operations $\vartriangleleft$ and $\vartriangleright$. If $(A, \circ)$ defined by Eq. \eqref{e13} is a Novikov algebra and $(A,L_{\vartriangleright},R_{\vartriangleleft})$ is a representation of $(A,\circ)$, then $(A,\vartriangleleft,\vartriangleright)$ is a pre-Novikov algebra.
\end{proposition}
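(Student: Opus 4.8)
The plan is to separate the statement into two parts that are proved by quite different means: the claim that $(A,L_{\vartriangleright},R_{\vartriangleleft})$ is a representation (together with its converse) on one side, and the claim that $\circ=\vartriangleleft+\vartriangleright$ is a Novikov algebra on the other. The key observation, which I would establish first, is that under the substitution $l=L_{\vartriangleright}$, $r=R_{\vartriangleleft}$ the four defining identities of a representation of a Novikov algebra become, in the same order, the four defining identities of a pre-Novikov algebra. Granting this dictionary, the representation claim and the converse are immediate, and only the two Novikov axioms require a genuine computation.

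First I would record the dictionary. Here $l(a)v=a\vartriangleright v$ and $r(a)v=v\vartriangleleft a$, so that $l(a)l(b)v=a\vartriangleright(b\vartriangleright v)$, $r(b)l(a)v=(a\vartriangleright v)\vartriangleleft b$, $r(a)r(b)v=(v\vartriangleleft b)\vartriangleleft a$, and so on. Substituting these into the four representation identities and renaming $v$ appropriately, one checks term by term that they coincide, respectively, with the first, second, third and fourth defining identities of a pre-Novikov algebra. Hence $(A,\vartriangleleft,\vartriangleright)$ is pre-Novikov if and only if $(A,L_{\vartriangleright},R_{\vartriangleleft})$ satisfies the representation identities for $(A,\circ)$; this yields at once both the ``furthermore'' assertion and the converse. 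In the converse the hypothesis that $(A,\circ)$ is Novikov is used only to make the phrase ``representation of $(A,\circ)$'' meaningful, since the derivation of the pre-Novikov identities uses nothing beyond the representation identities themselves.

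It then remains to verify that $\circ$ is a Novikov algebra. For right commutativity I would expand $(a\circ b)\circ c=(a\vartriangleleft b)\vartriangleleft c+(a\vartriangleright b)\vartriangleleft c+(a\circ b)\vartriangleright c$ and use the third defining identity to rewrite $(a\circ b)\vartriangleright c=(a\vartriangleright c)\vartriangleleft b$. Comparing with the analogous expansion of $(a\circ c)\circ b$, all terms match except $(a\vartriangleleft b)\vartriangleleft c$ against $(a\vartriangleleft c)\vartriangleleft b$, and these are equal by the fourth defining identity; hence $(a\circ b)\circ c=(a\circ c)\circ b$.

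The main computational step, and the only real obstacle, is the left-symmetric identity (the first Novikov axiom), which I would rewrite as the vanishing of
\begin{align*}
E:=(a\circ b)\circ c-a\circ(b\circ c)-(b\circ a)\circ c+b\circ(a\circ c).
\end{align*}
I expand every $\circ$ into $\vartriangleleft$ and $\vartriangleright$, apply the third defining identity to each term of the form $(x\circ y)\vartriangleright z$ and the second defining identity to the mixed terms $a\vartriangleright(b\vartriangleleft c)$ and $b\vartriangleright(a\vartriangleleft c)$, while leaving the double terms $a\vartriangleright(b\vartriangleright c)$ and $b\vartriangleright(a\vartriangleright c)$ untouched. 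Every term of the form $x\vartriangleleft(y\vartriangleleft z)$ or $x\vartriangleleft(y\vartriangleright z)$, together with the four terms $(a\vartriangleleft b)\vartriangleleft c$, $(a\vartriangleright b)\vartriangleleft c$, $(b\vartriangleleft a)\vartriangleleft c$ and $(b\vartriangleright a)\vartriangleleft c$, then cancels in pairs, and what remains is
\begin{align*}
E=\bigl((a\vartriangleright c)\vartriangleleft b-(b\vartriangleright c)\vartriangleleft a\bigr)-\bigl(a\vartriangleright(b\vartriangleright c)-b\vartriangleright(a\vartriangleright c)\bigr).
\end{align*}
This is zero because the first defining identity gives $a\vartriangleright(b\vartriangleright c)-b\vartriangleright(a\vartriangleright c)=(a\circ b-b\circ a)\vartriangleright c$, while the third defining identity rewrites the right-hand side as $(a\vartriangleright c)\vartriangleleft b-(b\vartriangleright c)\vartriangleleft a$, so the two bracketed expressions coincide. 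The whole difficulty lies in the bookkeeping of these terms and their signs; no idea beyond the four pre-Novikov identities is required.
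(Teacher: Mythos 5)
Your proof is correct. One point of comparison worth making explicit: the paper does not prove this proposition at all --- it imports it from \cite{HBG} (Proposition 3.33) --- so there is no in-paper argument to measure yours against, and what you have written serves as a complete, self-contained substitute. Your central ``dictionary'' observation is accurate and is really the structural heart of the statement: under $l=L_{\vartriangleright}$, $r=R_{\vartriangleleft}$ (so $l(a)v=a\vartriangleright v$, $r(a)v=v\vartriangleleft a$), the four identities defining a representation of a Novikov algebra transcribe verbatim, in order, into the four identities of Definition~\ref{pNov}; hence ``$(A,L_{\vartriangleright},R_{\vartriangleleft})$ satisfies the representation identities'' and ``$(A,\vartriangleleft,\vartriangleright)$ is pre-Novikov'' are literally the same condition. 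This disposes of both the ``furthermore'' claim and the converse, and your remark that in the converse the Novikov hypothesis on $\circ$ is needed only to make the word ``representation'' meaningful is exactly right. The two Novikov axioms also check out: right-commutativity reduces, after rewriting $(a\circ b)\vartriangleright c=(a\vartriangleright c)\vartriangleleft b$ via the third identity, to the fourth identity; and in the left-symmetry computation the terms cancel in pairs exactly as you claim (I verified the bookkeeping), leaving
\begin{align*}
E=\bigl((a\vartriangleright c)\vartriangleleft b-(b\vartriangleright c)\vartriangleleft a\bigr)-\bigl(a\vartriangleright(b\vartriangleright c)-b\vartriangleright(a\vartriangleright c)\bigr),
\end{align*}
which vanishes because the first identity gives $a\vartriangleright(b\vartriangleright c)-b\vartriangleright(a\vartriangleright c)=(a\circ b-b\circ a)\vartriangleright c$ and the third identity turns the right-hand side into $(a\vartriangleright c)\vartriangleleft b-(b\vartriangleright c)\vartriangleleft a$. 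No gaps.
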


\begin{definition}\cite{HBG}
Let $(V,l,r)$ be a representation of a Novikov algebra $(A,\circ )$. A linear map $T:V\rightarrow A$ is called an \textbf{$\mathcal{O}$-operator} on $(A,\circ )$ associated to $(V,l,r)$ if $T$ satisfies
\begin{align}
T(u)\circ T(v)=T(l(T(u))v)+T(r(T(v))u),\quad  u,v\in V.
\end{align}
\end{definition}

\begin{proposition}\label{pro-o1}\cite[Proposition 3.34]{HBG}
Let $(V,l,r)$ be a representation of a Novikov algebra $(A,\circ )$. If $T$ is an $\mathcal{O}$-operator on $(A,\circ )$ associated to $(V,l,r)$, then there is a pre-Novikov algebra structure on $V$ defined by
\begin{align}
u\vartriangleright v:=l(T(u))v,\quad u\vartriangleleft v:=r(T(v))u, \quad  u,v\in V.
\end{align}
\end{proposition}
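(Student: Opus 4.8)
The plan is to invoke the converse direction of Proposition~\ref{an}: to prove that $(V,\vartriangleleft,\vartriangleright)$ is a pre-Novikov algebra it suffices to check that the operation $u\circ v:=u\vartriangleleft v+u\vartriangleright v$ of Eq.~\eqref{e13} makes $V$ a Novikov algebra and that $(V,L_{\vartriangleright},R_{\vartriangleleft})$ is a representation of $(V,\circ)$. The whole argument rests on one observation that I would record first: by the definition of the operations one has $L_{\vartriangleright}(u)=l(T(u))$ and $R_{\vartriangleleft}(u)=r(T(u))$ for all $u\in V$, while the $\mathcal{O}$-operator identity reads exactly $T(u)\circ T(v)=T(l(T(u))v)+T(r(T(v))u)=T(u\vartriangleright v+u\vartriangleleft v)=T(u\circ v)$. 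In other words, $T\colon(V,\circ)\to(A,\circ)$ is an algebra homomorphism. Thus every computation on $V$ can be transported, via $T$, into a computation governed by the representation axioms of $(V,l,r)$ over $(A,\circ)$.

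For the representation claim I would verify the four defining axioms for the triple $(V,L_{\vartriangleright},R_{\vartriangleleft})=(V,\,l\circ T,\,r\circ T)$ over $(V,\circ)$. Each one reduces to the corresponding axiom for $(V,l,r)$ after setting $a=T(u)$, $b=T(v)$ and using the homomorphism property $T(u\circ v)=T(u)\circ T(v)$ to rewrite every term in which a product of two $V$-elements sits inside $l$ or $r$. For instance, the commutativity axiom $R_{\vartriangleleft}(u)R_{\vartriangleleft}(v)=R_{\vartriangleleft}(v)R_{\vartriangleleft}(u)$ is just $r(T(u))r(T(v))=r(T(v))r(T(u))$, and the axiom $L_{\vartriangleright}(u\circ v)=R_{\vartriangleleft}(v)L_{\vartriangleright}(u)$ becomes $l(T(u)\circ T(v))=r(T(v))l(T(u))$; the remaining two go identically. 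It is worth noting that this part uses only the homomorphism property and the four representation axioms on $A$, not the Novikov axioms for $\circ$.

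It then remains to show that $(V,\circ)$ satisfies the two Novikov axioms. Here I would expand $u\circ v=l(T(u))v+r(T(v))u$ twice, applying the homomorphism property each time a factor $T(u\circ v)$ appears, so that both sides of each axiom become sums of expressions of the shape $l(\cdot)\,l(\cdot)$, $l(\cdot)\,r(\cdot)$, $r(\cdot)\,l(\cdot)$, $r(\cdot)\,r(\cdot)$ applied to a single generator. Grouping the resulting terms according to which of $u,v,w$ sits innermost, one checks that each group is closed by exactly one representation axiom: the right-commutativity $(u\circ v)\circ w=(u\circ w)\circ v$ follows from the relations $l(a\circ b)=r(b)l(a)$ and $r(a)r(b)=r(b)r(a)$, while the left-symmetry of the associator $(u\circ v)\circ w-u\circ(v\circ w)$ follows from the first two axioms. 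The main obstacle is precisely this last identity: it is the step with the most terms, with three arguments and both actions interacting, and the delicate point is the bookkeeping---organizing the terms by innermost argument so that each cluster is resolved by a single axiom. There is no genuine analytic difficulty, since the representation axioms are tailored so that the clusters match up exactly; the care lies entirely in not miscounting the cancellations. Once both Novikov axioms are verified, Proposition~\ref{an} applies and the proof is complete.
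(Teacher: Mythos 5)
Your proof is correct, but there is nothing in the paper to compare it against: the paper states this proposition as an imported result (\cite[Proposition 3.34]{HBG}) and gives no proof of its own. On its merits your argument is sound. The pivotal observation---that the $\mathcal{O}$-operator identity is exactly the statement $T(u\circ v)=T(u)\circ T(v)$ for $u\circ v:=u\vartriangleright v+u\vartriangleleft v=l(T(u))v+r(T(v))u$, i.e.\ that $T$ is a homomorphism---is right, and it does make every subsequent check transport along $T$: the four representation axioms for $(V,L_{\vartriangleright},R_{\vartriangleleft})$ over $(V,\circ)$ become verbatim the axioms for $(V,l,r)$ evaluated at $a=T(u)$, $b=T(v)$; right-commutativity of $\circ$ on $V$ splits into three clusters killed by $l(a\circ b)v=r(b)l(a)v$ and $r(a)r(b)v=r(b)r(a)v$; and left-symmetry of the associator splits into three clusters (sorted by innermost argument), one killed by the first representation axiom and two by the second. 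I verified the cancellations; they close exactly as you describe. One comparison worth making: given your homomorphism observation, it is marginally more economical to verify the four identities of Definition \ref{pNov} for $(V,\vartriangleleft,\vartriangleright)$ directly---each reduces in a single line to one representation axiom of $(V,l,r)$---after which the Novikov structure on $(V,\circ)$ comes for free from the \emph{forward} direction of Proposition \ref{an}. Your route instead invokes the converse direction, and so pays for it by expanding the two Novikov axioms by hand, which is the longest part of your argument; both routes are valid.
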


\begin{definition}\cite{HBG}
Let $(A,\circ )$ be a Novikov algebra. If there is a skewsymmetric nondegenerate bilinear form $\omega (\cdot,\cdot)$ on $A$ satisfying
\begin{align}
\omega (a\circ b,c)-\omega (a\circ c+c\circ a,b)+\omega (c\circ b,a)=0 ,\quad a,b,c\in A,\label{qn}
\end{align}
then $(A,\circ ,\omega(\cdot,\cdot))$ is called a \textbf{quasi-Frobenius Novikov algebra}.
\end{definition}

\delete{\begin{remark}\label{re1}
Let $V$ be a vector space. Any invertible linear map $T$: $V^*\rightarrow V$ can induce a nondegenerate bilinear form $\omega (\cdot,\cdot)$ on $V$ through
\begin{align}
\omega (u,v)=\langle T^{-1}u,v \rangle, \quad  u,v\in V.
\end{align}

Conversely, any nondegenerate bilinear form $\omega (\cdot,\cdot)$ on $V$ can induce an invertible linear map $T$: $V^*\rightarrow V$ defined by
\begin{align}
\omega (T(u^*),v)=\langle u^*,v\rangle, \quad  u^*\in V^*,v\in V. \label{e1}
\end{align}
\end{remark}}

Next, we show that there is a natural pre-Novikov algebra structure on $A$, when $(A,\circ ,\omega(\cdot,\cdot))$ is a quasi-Frobenius Novikov algebra.

\begin{theorem}\label{t1}
Let $(A,\circ ,\omega(\cdot,\cdot))$ be a quasi-Frobenius Novikov algebra. Then there is a compatible pre-Novikov algebra structure on $A$ given by
\begin{align}
\omega (a\vartriangleright b,c)&=\omega(a\circ c+c\circ a,b),\label{t11}\\
\omega(a\vartriangleleft b,c)&=\omega(a,c\circ b),\quad a,b,c\in A,\label{t12}
\end{align}
such that $(A,\circ )$ is the associated Novikov algebra of $(A,\vartriangleleft,\vartriangleright)$. This pre-Novikov algebra is called the \textbf{associated pre-Novikov algebra} of $(A,\circ ,\omega(\cdot,\cdot))$.
\end{theorem}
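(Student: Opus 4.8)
The plan is to identify the operations \eqref{t11}--\eqref{t12} with the transport, along the isomorphism induced by $\omega$, of the pre-Novikov structure manufactured by an $\mathcal{O}$-operator, so that Proposition \ref{pro-o1} carries the structural burden and the four defining axioms of a pre-Novikov algebra need never be checked by hand. First I would note that $\vartriangleright$ and $\vartriangleleft$ are well defined: for fixed $a,b$ the maps $c\mapsto\omega(a\circ c+c\circ a,b)$ and $c\mapsto\omega(a,c\circ b)$ are elements of $A^*$, so nondegeneracy of $\omega$ yields unique elements $a\vartriangleright b$ and $a\vartriangleleft b$ obeying \eqref{t11} and \eqref{t12}.

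Next I would dispose of the compatibility $a\circ b=a\vartriangleright b+a\vartriangleleft b$, which is short and is also the final assertion of the theorem. Writing \eqref{qn} as $\omega(a\circ b,c)=\omega(a\circ c+c\circ a,b)-\omega(c\circ b,a)$ and converting $-\omega(c\circ b,a)$ into $\omega(a,c\circ b)$ by skewsymmetry, the right-hand side becomes $\omega(a\vartriangleright b,c)+\omega(a\vartriangleleft b,c)$; nondegeneracy then gives the identity.

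The core step is to let $T\colon A^*\to A$ be the isomorphism determined by $\omega(T(f),c)=\langle f,c\rangle$ and to show that $T$ is an $\mathcal{O}$-operator on $(A,\circ)$ associated to the dual representation $(A^*,L_\circ^*+R_\circ^*,-R_\circ^*)$ recorded in the Remark after the dualization proposition. Setting $a=T(f)$ and $b=T(g)$ and unwinding \eqref{e14}, one finds $\langle(L_\circ^*+R_\circ^*)(a)g,c\rangle=\omega(a\circ c+c\circ a,b)$ and $\langle -R_\circ^*(b)f,c\rangle=\omega(a,c\circ b)$, and their sum equals $\omega(a\circ b,c)$ exactly by the compatibility just proved. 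Pairing against every $c$ and invoking nondegeneracy delivers the $\mathcal{O}$-operator equation for $T$. I expect this identification of the $\mathcal{O}$-operator equation with the quasi-Frobenius identity \eqref{qn} to be the crux of the argument; everything else is bookkeeping.

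Finally I would feed $T$ into Proposition \ref{pro-o1} to obtain a pre-Novikov structure on $A^*$ given by $f\vartriangleright' g=(L_\circ^*+R_\circ^*)(T(f))g$ and $f\vartriangleleft' g=-R_\circ^*(T(g))f$, and then transport it to $A$ along the linear isomorphism $T$. Because transport along a bijection preserves every pre-Novikov identity, $(A,\vartriangleright,\vartriangleleft)$ is a pre-Novikov algebra as soon as I verify that the transported operations are precisely \eqref{t11}--\eqref{t12}; reading the two computations above through $\omega(T(x),c)=\langle x,c\rangle$ does exactly this. The transported form of the $\mathcal{O}$-operator equation reads $a\circ b=a\vartriangleright b+a\vartriangleleft b$, so the induced Novikov structure is the original $\circ$, and $(A,\circ)$ is the associated Novikov algebra of $(A,\vartriangleleft,\vartriangleright)$, as required.
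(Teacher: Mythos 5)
Your proposal is correct and follows essentially the same route as the paper: both construct the isomorphism $T\colon A^*\to A$ from $\omega$, identify the $\mathcal{O}$-operator equation for $T$ relative to $(A^*,L_\circ^*+R_\circ^*,-R_\circ^*)$ with the quasi-Frobenius identity \eqref{qn}, invoke Proposition \ref{pro-o1} to get a pre-Novikov structure on $A^*$, and transport it back to $A$ along $T$. The only difference is cosmetic: you prove the compatibility $a\circ b=a\vartriangleright b+a\vartriangleleft b$ first and use it to get the $\mathcal{O}$-operator equation, while the paper verifies the $\mathcal{O}$-operator equation in one computation against \eqref{qn} and deduces compatibility at the end.
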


\begin{proof}
Obviously, the nondegenerate bilinear form $\omega(\cdot,\cdot)$ on $A$ can induce an invertible linear map $T: A^*\rightarrow A$ given by
\begin{eqnarray}
\label{eq1}\omega(T(f),a)=\langle f,a\rangle,\quad f\in A^*,a\in A.
\end{eqnarray}
Therefore, for all $a,b,c\in A$, we have
\begin{align*}
\omega (a\vartriangleright b,c)&=\omega(a\circ c+c\circ a,b)=-\omega (b,a\circ c+c\circ a)\\
&=-\langle T^{-1}(b), a\circ c+c\circ a \rangle =-\langle T^{-1}(b), (L_{\circ}+R_{\circ})(a)c \rangle\\
&=\langle (L_{\circ}^*+R_{\circ}^*)(a)T^{-1}(b),c \rangle=\omega(T((L_{\circ}^*+R_{\circ}^*)(a)T^{-1}(b)),c),
\end{align*}
and
\begin{align*}
\omega (a\vartriangleleft b,c)&=\omega (a,c\circ b)=\langle T^{-1}(a),c\circ b \rangle\\
&=\langle T^{-1}(a),R_{\circ}(b)c\rangle=-\langle R_{\circ}^*(b)T^{-1}(a),c\rangle\\
&=\omega(T((-R_{\circ}^*)(b)T^{-1}(a)),c).
\end{align*}
By the nondegenerate property of $\omega (\cdot,\cdot)$, we obtain
\begin{eqnarray}
a\vartriangleright b=T((L_{\circ}^*+R_{\circ}^*)(a)T^{-1}(b)),\;\;a\vartriangleleft b=T((-R_{\circ}^*)(b)T^{-1}(a)),\;\;a, b\in A.
\end{eqnarray}
Set $a=T(f),\ b=T(g)$. Define
\begin{align*}
f\vartriangleright_*g &:=(L_{\circ}^*+R_{\circ}^*)(T(f))g,\\
f\vartriangleleft_* g&:=(-R_{\circ}^*)(T(g))f,\quad f, g\in A^*.
\end{align*}
Then we have
\begin{align*}
a\vartriangleright b&=T(f)\vartriangleright T(g)=T(f\vartriangleright_* g),\\
a\vartriangleleft b&=T(f)\vartriangleleft T(g)=T(f\vartriangleleft_* g),\;\;a, b\in A.
\end{align*}
If we prove that $(A^\ast, \vartriangleleft_*, \vartriangleright_*)$ is a pre-Novikov algebra, then $(A, \lhd, \rhd)$ is a pre-Novikov algebra and $T$ is an isomorphism of pre-Novikov algebras. Therefore, we only need to show that $(A^\ast, \vartriangleleft_*,$ $ \vartriangleright_*)$ is a pre-Novikov algebra.

For all $f$, $g$, $h\in A^*$, we have
\begin{eqnarray*}
&&\langle h,T(f)\circ T(g)-T((L_{\circ}^*+R_{\circ}^*)(T(f))g-R_{\circ}^*(T(g))f)\rangle\\
&&\quad=\omega(T(h),T(f)\circ T(g))-\omega(T(h),T((L_{\circ}^*+R_{\circ}^*)(T(f))g-R_{\circ}^*(T(g))f))\\
&&\quad=\omega(T(h),T(f)\circ T(g))+\omega(T((L_{\circ}^*+R_{\circ}^*)(T(f))g-R_{\circ}^*(T(g))f),T(h))\\
&&\quad=\omega(T(h),T(f)\circ T(g))+\langle(L_{\circ}^*+R_{\circ}^*)(T(f))g-R_{\circ}^*(T(g))f, T(h)\rangle\\
&&\quad=\omega(T(h),T(f)\circ T(g))-\langle g,T(f)\circ T(h)+ T(h)\circ T(f)\rangle+\langle f ,T(h)\circ T(g)\rangle\\
&&\quad=\omega(T(h),T(f)\circ T(g))-\omega(T(g),T(f)\circ T(h)+ T(h)\circ T(f))+\omega(T(f) ,T(h)\circ T(g))\\
&&\quad=0.
\end{eqnarray*}
Therefore, we obtain
\begin{align*}
T(f)\circ T(g)-T((L_{\circ}^*+R_{\circ}^*)(T(f))g-R_{\circ}^*(T(g))f)=0.
\end{align*}
Hence $T:A^*\rightarrow A$ defined by Eq. \eqref{eq1} is an invertible $\mathcal{O}$-operator on $(A,\circ )$ associated to $(A^*,L_{\circ}^*+R_{\circ}^*,-R_{\circ}^*)$.
By Proposition \ref{pro-o1}, $(A^\ast, \vartriangleleft_*, \vartriangleright_*)$ is a pre-Novikov algebra.

Moreover,
\begin{eqnarray*}
a\lhd b+a\rhd b&=&T((L_{\circ}^*+R_{\circ}^*)(a)T^{-1}(b))+T((-R_{\circ}^*)(b)T^{-1}(a))\\
&=&T(f)\circ T(g)\\
&=&a\circ b.
\end{eqnarray*}
Therefore, $(A,\circ)$ is the associated Novikov algebra of $(A, \lhd, \rhd)$.

The proof is completed.
\end{proof}

\section{Double constructions of quasi-Frobenius Novikov algebras and pre-Novikov bialgebras}
In this section, we introduce the definitions of double constructions of quasi-Frobenius Novikov algebras and pre-Novikov bialgebras, and show that the double construction of quasi-Frobenius Novikov algebras is equivalent to a pre-Novikov bialgebra, which is also characterized by some matched pair of Novikov algebras.

First, we recall matched pairs of Novikov algebras.
\begin{proposition}\label{d1}\cite{H}
Let $(A,\circ)$ and $(B,\bullet)$  be Novikov algebras. If $(B,l_A,r_A)$ is a representation of $(A,\circ)$, $(A,l_B,r_B)$ is a representation of $(B,\bullet)$ and the following conditions are satisfied:
\begin{flalign}
&l_B(x)(a\circ b)=-l_B(l_A(a)x-r_A(a)x)b+(l_B(x)a-r_B(x)a)\circ b+r_B(r_A(b)x)a+a\circ (l_B(x)b),\label{e2}\\
&r_B(x)(a\circ b-b\circ a)=r_B(l_A(b)x)a-r_B(l_A(a)x)b+a\circ (r_B(x)b)-b\circ (r_B(x)a),\label{e3}\\
&l_A(a)(x\bullet y)=-l_A(l_B(x)a-r_B(x)a)y+(l_A(a)x-r_A(a)x)\bullet y+r_A(r_B(y)a)x+x\bullet(l_A(a)y),\label{e4}\\
&r_A(a)(x\bullet y-y\bullet x)=r_A(l_B(y)a)x-r_A(l_B(x)a)y+x\bullet (r_A(a)y)-y\bullet (r_A(a)x),\label{e5}\\
&(l_B(x)a)\circ b+l_B(r_A(a)x)b=(l_B(x)b)\circ a+l_B(r_A(b)x)a,\label{e6}\\
&(r_B(x)a)\circ b+l_B(l_A(a)x)b=r_B(x)(a\circ b),\label{e7}\\
&l_A(r_B(x)a)y+(l_A(a)x)\bullet y=l_A(r_B(y)a)x+(l_A(a)y)\bullet x,\label{e8}\\
&l_A(l_B(x)a)y+(r_A(a)x)\bullet y=r_A(a)(x\bullet y), \quad   a,b\in A,x,y\in B,\label{e9}
\end{flalign}
then there is a Novikov algebra structure on the direct sum $A\oplus B$ of the underlying vector spaces of $A$ and $B$ given by
\begin{flalign}
&(a+x)\cdot (b+y):=(a\circ b+l_B(x)b+r_B(y)a)+(x\bullet y+l_A(a)y+r_A(b)x),\;\;a, b\in A,\;\;x, y\in B.\label{e10}
\end{flalign}
 $(A,B,l_A,r_A,l_B,r_B)$ satisfying the above conditions is called a \textbf{matched pair of Novikov algebras.} Conversely, any Novikov algebra that can be decomposed into a linear direct sum of two Novikov subalgebras is obtained from a matched pair of Novikov algebras.
\end{proposition}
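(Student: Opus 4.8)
The plan is to endow $D := A\oplus B$ with the bilinear operation $\cdot$ of \eqref{e10} and to verify directly the two defining identities of a Novikov algebra: left-symmetry of the associator and right-commutativity. By bilinearity it suffices to test each identity on triples whose three entries each lie wholly in $A$ or wholly in $B$, which gives $2^{3}=8$ cases per axiom. The construction \eqref{e10} is invariant under the involution swapping $A\leftrightarrow B$, $\circ\leftrightarrow\bullet$, $l_A\leftrightarrow l_B$ and $r_A\leftrightarrow r_B$, and this involution interchanges \eqref{e2} with \eqref{e4}, \eqref{e3} with \eqref{e5}, \eqref{e6} with \eqref{e8}, and \eqref{e7} with \eqref{e9}. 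Hence I would only work out the cases having two entries from $A$ and one from $B$; the cases with two entries from $B$ follow by this duality, while the all-$A$ and all-$B$ triples reduce at once to the Novikov axioms of $(A,\circ)$ and $(B,\bullet)$ that are assumed.

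For each mixed triple I would expand both sides using \eqref{e10} and then project the resulting equation onto $A$ and onto $B$ separately, treating each projection as a single constraint to be matched against a hypothesis. After using the Novikov identities of $(A,\circ)$ to simplify, the $B$-valued parts of the two-$A$ cases should repackage exactly into the four defining identities of a representation of $(A,\circ)$, which is the content of the hypothesis that $(B,l_A,r_A)$ is such a representation; the leftover $A$-valued parts are then forced to be the compatibility conditions. I expect the left-symmetry identity to be the source of \eqref{e2} and \eqref{e3} --- the skew combination $a\circ b-b\circ a$ in \eqref{e3} being the tell-tale sign of swapping the first two arguments --- and the right-commutativity identity to be the source of \eqref{e6} and \eqref{e7}, both of which are symmetric under interchange of the last two arguments. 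The dual conditions \eqref{e4}, \eqref{e5}, \eqref{e8} and \eqref{e9} then come from the two-$B$ cases by the symmetry above.

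For the converse, given a Novikov algebra $(D,\cdot)$ that splits as a linear direct sum $D=A\oplus B$ of two subalgebras, I would recover the data of a matched pair by projecting the mixed products. Writing $\pi_A,\pi_B$ for the projections onto the two summands, I set $l_A(a)x:=\pi_B(a\cdot x)$, $r_B(x)a:=\pi_A(a\cdot x)$, $l_B(x)a:=\pi_A(x\cdot a)$ and $r_A(a)x:=\pi_B(x\cdot a)$ for $a\in A$, $x\in B$, together with $a\circ b:=a\cdot b$ and $x\bullet y:=x\cdot y$; because $A$ and $B$ are subalgebras the latter two land in $A$ and $B$ respectively, and \eqref{e10} holds by construction. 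Running the forward computation in reverse --- decomposing the two Novikov identities of $(D,\cdot)$ into their $A$- and $B$-components --- then yields precisely the statements that $(B,l_A,r_A)$ and $(A,l_B,r_B)$ are representations together with \eqref{e2} through \eqref{e9}.

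The conceptual content is routine, since this is the Novikov-algebra instance of the standard matched-pair (bicrossed product) construction; the real difficulty is organizational rather than mathematical. The main obstacle is the sheer volume and fragility of the bookkeeping: every mixed case produces a long list of terms, and one must keep straight which terms are absorbed by the representation axioms and which are forced to cancel through a compatibility condition, all while holding the $A$- and $B$-projections apart. Systematically exploiting the $A\leftrightarrow B$ duality to halve the work, and sorting the expansion by the number of $B$-factors and by projection target, is the device I would rely on to carry this out without error.
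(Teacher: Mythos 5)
The paper contains no proof of this proposition at all---it is quoted verbatim from \cite{H}---so the only comparison possible is with the standard argument of that cited source, which is exactly what you propose: a direct case-by-case verification of the two Novikov identities on $A\oplus B$. Your organization is correct in every structural detail I checked: the $A\leftrightarrow B$ involution does pair \eqref{e2}/\eqref{e4}, \eqref{e3}/\eqref{e5}, \eqref{e6}/\eqref{e8}, \eqref{e7}/\eqref{e9}; the two-$A$-one-$B$ triples do yield the representation axioms of $(B,l_A,r_A)$ as their $B$-projections and \eqref{e2}, \eqref{e3} (from left-symmetry) and \eqref{e6}, \eqref{e7} (from right-commutativity) as their $A$-projections (e.g.\ right-commutativity on $(a,b,x)$ gives \eqref{e7} on $A$ and $l_A(a\circ b)x=r_A(b)l_A(a)x$ on $B$); and the converse-by-projections recovers the matched-pair data exactly as in \eqref{e10}.
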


Next, we introduce the definition of double constructions of quasi-Frobenius Novikov algebras associated to pre-Novikov algebras.
\begin{definition}
Let $(A,\vartriangleleft,\vartriangleright)$, $(A^*,\vartriangleleft_\ast,\vartriangleright_\ast)$ be pre-Novikov algebras and $(A,\circ)$, $(A^\ast,\circ_\ast)$ be their associated Novikov algebras respectively. If a quasi-Frobenius Novikov algebra $(B, \cdot, \omega(\cdot,\cdot))$ satisfies the following conditions:
\begin{enumerate}
         \item $B$ is the direct sum of $A$ and $A^*$ as vector spaces,
         \item $(A,\vartriangleleft,\vartriangleright)$ and $(A^*,\vartriangleleft_\ast,\vartriangleright_\ast)$ are pre-Novikov subalgebras of $(B,\trianglelefteq,\trianglerighteq)$, which is the associated pre-Novikov algebra of $(B, \cdot, \omega(\cdot,\cdot))$,
         \item the bilinear form $\omega(\cdot,\cdot)$ on $B=A\oplus A^*$ is given by
\begin{flalign}
\omega(a+f,b+g)=\langle f,b\rangle -\langle g,a\rangle,\quad  a,b\in A,\;\;f, g\in A^*,\label{man}
\end{flalign}

       \end{enumerate}
then  $(B, \cdot, \omega(\cdot,\cdot))$ is called a {\bf double construction of quasi-Frobenius Novikov algebras} associated to $(A,\vartriangleleft,\vartriangleright)$ and $(A^*,\vartriangleleft_\ast,\vartriangleright_\ast)$.
\end{definition}

We give a characterization of double constructions of quasi-Frobenius Novikov algebras associated to pre-Novikov algebras by matched pairs of Novikov algebras.
\begin{proposition}\label{p1}
Let $(A,\vartriangleleft,\vartriangleright)$ be a pre-Novikov algebra and $(A,\circ )$ be the associated Novikov algebra of  $(A,\vartriangleleft,\vartriangleright)$. Suppose that there exists a pre-Novikov algebra structure $(A^*,\vartriangleleft_*,\vartriangleright_*)$ on the dual vector space $A^*$ and $(A^*,\circ_*)$ is the associated Novikov algebra of $(A^*,\vartriangleleft_*,\vartriangleright_*)$. Then there is a double construction of quasi-Frobenius Novikov algebras  associated to $(A,\vartriangleleft,\vartriangleright)$ and $(A^*,\vartriangleleft_\ast,\vartriangleright_\ast)$ if and only if $(A,A^*,L_{\vartriangleright}^*+R_{\vartriangleleft}^*,-R_{\vartriangleleft}^*,L_{\vartriangleright_*}^*+R_{\vartriangleleft_*}^*,-R_{\vartriangleleft_*}^*)$ is a matched pair of Novikov algebras.
\end{proposition}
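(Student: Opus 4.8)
The plan is to reduce the equivalence to the two tools already in place: Proposition~\ref{d1}, which builds the Novikov algebra on $A\oplus A^*$ from a matched pair and conversely, and Theorem~\ref{t1}, which attaches a pre-Novikov algebra to a quasi-Frobenius Novikov algebra. First I would record a preliminary observation that disposes of part of the matched-pair data. By Proposition~\ref{an}, $(A,L_{\vartriangleright},R_{\vartriangleleft})$ is a representation of $(A,\circ)$, so dualizing via Eq.~\eqref{e14} shows that $(A^*, L_{\vartriangleright}^*+R_{\vartriangleleft}^*, -R_{\vartriangleleft}^*)$ is again a representation of $(A,\circ)$; symmetrically $(A, L_{\vartriangleright_*}^*+R_{\vartriangleleft_*}^*, -R_{\vartriangleleft_*}^*)$ is a representation of $(A^*,\circ_*)$. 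Hence the two ``representation'' requirements in the definition of a matched pair hold automatically, and the only substantive content of the matched-pair statement is the eight compatibility identities \eqref{e2}--\eqref{e9}.

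For the direction ``matched pair $\Rightarrow$ double construction'', I would take the Novikov algebra $(B,\cdot)$ on $B=A\oplus A^*$ produced by Proposition~\ref{d1} via Eq.~\eqref{e10}, and equip it with the canonical form $\omega$ of Eq.~\eqref{man}, which is visibly skewsymmetric and nondegenerate. The first task is to check the quasi-Frobenius identity \eqref{qn} by splitting each of $u,v,w$ into its $A$- and $A^*$-components and treating the eight homogeneous cases. Because $\omega$ pairs $A$ with $A^*$ only, the all-$A$ and all-$A^*$ cases vanish trivially, and in each mixed case the explicit forms $l_A=L_{\vartriangleright}^*+R_{\vartriangleleft}^*$, $r_A=-R_{\vartriangleleft}^*$ and their $A^*$-analogues, together with $\circ=\vartriangleleft+\vartriangleright$ and $\circ_*=\vartriangleleft_*+\vartriangleright_*$, make the expression collapse to zero after applying \eqref{e14}; notably this step uses only the shapes of the representations, not the matched-pair identities. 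With $(B,\cdot,\omega)$ now quasi-Frobenius, Theorem~\ref{t1} yields its associated pre-Novikov algebra $(B,\trianglelefteq,\trianglerighteq)$, and I would then feed the canonical $\omega$ into the defining relations \eqref{t11}--\eqref{t12} to compute $\trianglerighteq,\trianglelefteq$ on $A$: pairing $a\trianglerighteq b$ and $a\trianglelefteq b$ against $A$ shows they lie in $A$, and pairing against $A^*$ identifies them with $a\vartriangleright b$ and $a\vartriangleleft b$, precisely because $l_A+r_A=L_{\vartriangleright}^*$ and $r_A=-R_{\vartriangleleft}^*$; the symmetric computation handles $A^*$. Thus $A$ and $A^*$ are pre-Novikov subalgebras and all conditions of the double construction hold.

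For the converse I would start from a double construction $(B,\cdot,\omega)$. Since $A$ and $A^*$ are pre-Novikov subalgebras of the associated pre-Novikov algebra $(B,\trianglelefteq,\trianglerighteq)$ and $\cdot=\trianglelefteq+\trianglerighteq$ by Theorem~\ref{t1}, they are Novikov subalgebras of $(B,\cdot)$; as $B=A\oplus A^*$, the converse half of Proposition~\ref{d1} presents $(B,\cdot)$ as a matched pair for representations $l_A,r_A,l_B,r_B$ read off from the mixed products in Eq.~\eqref{e10}. It then remains to identify these representations. Running the computation of the previous paragraph in reverse, using \eqref{t11}--\eqref{t12} with the canonical $\omega$ and the hypothesis that $\trianglerighteq,\trianglelefteq$ restrict to $\vartriangleright,\vartriangleleft$ on $A$, forces $l_A+r_A=L_{\vartriangleright}^*$ from the $\trianglerighteq$-relation and $r_A=-R_{\vartriangleleft}^*$ from the $\trianglelefteq$-relation, whence $l_A=L_{\vartriangleright}^*+R_{\vartriangleleft}^*$; the analogous argument on $A^*$ gives $l_B=L_{\vartriangleright_*}^*+R_{\vartriangleleft_*}^*$ and $r_B=-R_{\vartriangleleft_*}^*$, which is exactly the asserted matched pair. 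I expect the main obstacle to be organizational rather than conceptual: keeping the various dualizations and the sign in Eq.~\eqref{e14} straight across the two pairings ($A$ with $A^*$, and $A^*$ with $(A^*)^*\cong A$) and marshalling the case analysis cleanly. The conceptual crux is Theorem~\ref{t1}, since the dual-representation forms are exactly what make the canonical $\omega$ quasi-Frobenius and force the pre-Novikov structures on $A$ and $A^*$ to be the prescribed ones.
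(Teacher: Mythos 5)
Your proposal is correct and follows essentially the same route as the paper: both directions hinge on Proposition \ref{d1} to pass between the matched pair and the Novikov structure on $A\oplus A^*$, and on Theorem \ref{t1} to produce the associated pre-Novikov structure and to identify $\trianglerighteq,\trianglelefteq$ (resp.\ the representations $l_A,r_A,l_{A^*},r_{A^*}$) via nondegeneracy of the canonical form \eqref{man}. Your observations that the quasi-Frobenius identity \eqref{qn} collapses using only $\circ=\vartriangleleft+\vartriangleright$ and the shapes of the dual actions, and that the representation requirements of the matched pair hold automatically by dualizing Proposition \ref{an}, are both consistent with what the paper's computation actually uses.
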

\begin{proof}
Suppose that $(A,A^*,L_{\vartriangleright}^*+R_{\vartriangleleft}^*,-R_{\vartriangleleft}^*,L_{\vartriangleright_*}^*+R_{\vartriangleleft_*}^*,-R_{\vartriangleleft_*}^*)$ is a matched pair of Novikov algebras. Then by Proposition \ref{d1}, there is a Novikov algebra structure $(A\oplus A^\ast, \cdot)$ on the vector space $A\oplus A^*$ given by Eq. \eqref{e10}. By Eq. \eqref{man}, for all $a,b,c\in A$, and $f,g,h\in A^*$, we have
\begin{eqnarray*}
&&\omega((a+f)\cdot(b+g),c+h)\\
&&\;=\omega((a\circ b+(L_{\vartriangleright_*}^*+R_{\vartriangleleft_*}^*)(f)b+(-R_{\vartriangleleft_*}^*)(g)a)+(f\circ_* g+(L_{\vartriangleright}^*+R_{\vartriangleleft}^*)(a)g+(-R_{\vartriangleleft}^*)(b)f),c+h)\\
&&\;=\langle f\circ_* g+(L_{\vartriangleright}^*+R_{\vartriangleleft}^*)(a)g+(-R_{\vartriangleleft}^*)(b)f,c\rangle-\langle h,a\circ b+(L_{\vartriangleright_*}^*+R_{\vartriangleleft_*}^*)(f)b+(-R_{\vartriangleleft_*}^*)(g)a\rangle\\
&&\;=\langle f,c\vartriangleleft b\rangle-\langle g,a\vartriangleright c+c\vartriangleleft a \rangle-\langle h,a\circ b\rangle -\langle h\vartriangleleft_* g, a\rangle\\
&&\quad\;+\langle f\vartriangleright_*h+h\vartriangleleft_*f, b\rangle+\langle f\circ_* g, c\rangle.
\end{eqnarray*}
Similarly, we obtain
\begin{eqnarray*}
\omega((a+f)\cdot(c+h),b+g)&=&\langle f,b\vartriangleleft c\rangle-\langle g,a\circ c\rangle-\langle h,a\vartriangleright b+b\vartriangleleft a \rangle\\
&&-\langle g\vartriangleleft_* h, a\rangle+\langle f\circ_* h, b\rangle+\langle f\vartriangleright_*g+g\vartriangleleft_*f,c\rangle,\\
\omega((c+h)\cdot(a+f),b+g)&=&-\langle f,c\vartriangleright b+b\vartriangleleft c \rangle-\langle g,c\circ a\rangle+\langle h,b\vartriangleleft a\rangle\\
&&+\langle h\vartriangleright_*g+g\vartriangleleft_*h,a\rangle+\langle h\circ_* f,b\rangle-\langle g\vartriangleleft_* f,c\rangle,\\
\omega((c+h)\cdot(b+g),a+f)&=&-\langle f,c\circ b\rangle-\langle g,c\vartriangleright a+a\vartriangleleft c \rangle+\langle h,a\vartriangleleft b\rangle\\
&&+\langle h\circ_* g,a\rangle+\langle h\vartriangleright_*f+f\vartriangleleft_*h, b\rangle-\langle f\vartriangleleft_* g,c\rangle.
\end{eqnarray*}
Hence we get
\begin{eqnarray*}
&&\omega ((a+f)\cdot(b+g),c+h)-\omega ((a+f)\cdot(c+h)+(c+h)\cdot( a+f),b+g)\\
&&\quad\quad+\omega ((c+h)\cdot (b+g),a+f)\\
&&\;\;=\langle f,c\vartriangleleft b-b\vartriangleleft c+( c\vartriangleright b+b\vartriangleleft c) -c\circ b\rangle\\
&&\quad\quad+\langle g,-(a\vartriangleright c+c\vartriangleleft a)+a\circ c+c\circ a-(c\vartriangleright a+a\vartriangleleft c) \rangle  \\
&&\quad\quad+\langle h,-a\circ b+(a\vartriangleright b+b\vartriangleleft a)-b\vartriangleleft a+a\vartriangleleft b\rangle\\
&&\quad\quad+\langle a,-h\vartriangleleft_* g+g\vartriangleleft_* h-(h\vartriangleright_*g+g\vartriangleleft_*h)+h\circ_* g\rangle\\
&&\quad\quad+\langle b,(f\vartriangleright_*h+h\vartriangleleft_*f)-f\circ_* h-h\circ_* f+(h\vartriangleright_*f+f\vartriangleleft_*h)\rangle\\
&&\quad\quad+\langle c,f\circ_* g-(f\vartriangleright_*g+g\vartriangleleft_*f)+g\vartriangleleft_* f-f\vartriangleleft_* g\rangle\\
&&\;\;=0.
\end{eqnarray*}
Therefore, $(A\oplus A^*,\cdot,\omega(\cdot,\cdot))$ is a quasi-Frobenius Novikov algebra. Then by Theorem \ref{t1}, there is a compatible pre-Novikov algebra structure on $A\oplus A^*$. Denote it by $(A\oplus A^*,\trianglelefteq,\trianglerighteq)$. It suffices to check that $(A,\vartriangleleft,\vartriangleright)$ and $(A^*,\vartriangleleft_*,\vartriangleright_*)$ are pre-Novikov subalgebras of $(A\oplus A^*,\trianglelefteq,\trianglerighteq)$. For all $a,b,c\in A,h\in A^*$, we have
\begin{eqnarray*}
\omega(a\trianglerighteq b,c+h)-\omega(a\vartriangleright b,c+h)&=&\omega(a\cdot(c+h),b)+\omega((c+h)\cdot a,b)-\omega(a\vartriangleright b,c+h)\\
&=&-\langle h,a\vartriangleright b+b\vartriangleleft a\rangle+\langle h,b\vartriangleleft a\rangle+\langle h,a\vartriangleright b\rangle\\
&=&0,
\end{eqnarray*}
and
\begin{eqnarray*}
\omega(a\trianglelefteq b,c+h)-\omega(a\vartriangleleft b,c+h)&=&\omega(a,(c+h)\cdot b)-\omega(a\vartriangleleft b,c+h)\\
&=&-\langle h, a\vartriangleleft b\rangle+\langle h,a\vartriangleleft b \rangle\\
&=&0.
\end{eqnarray*}
By the nondegenerate property of $\omega(\cdot,\cdot)$, we obtain $a\trianglerighteq b=a\vartriangleright b$ and $a\trianglelefteq b=a\vartriangleleft b$ for all $a,b\in A$. Hence $(A,\vartriangleleft,\vartriangleright)$ is a pre-Novikov subalgebra of $(A\oplus A^*,\trianglelefteq,\trianglerighteq)$. Similarly, we can prove that $(A^*,\vartriangleleft_*,\vartriangleright_*)$ is a pre-Novikov subalgebra of $(A\oplus A^*,\trianglelefteq,\trianglerighteq)$.
Hence $(A\oplus A^*,\cdot,\omega(\cdot,\cdot))$ is a double construction of quasi-Frobenius Novikov algebras associated to $(A,\vartriangleleft,\vartriangleright)$ and $(A^*,\vartriangleleft_\ast,\vartriangleright_\ast)$.

Conversely, suppose that $(A\oplus A^*,\cdot,\omega(\cdot,\cdot))$ is a double construction of quasi-Frobenius Novikov algebras associated to $(A,\vartriangleleft,\vartriangleright)$ and $(A^*,\vartriangleleft_\ast,\vartriangleright_\ast)$. By Proposition \ref{d1}, the Novikov algebra $(A\oplus A^*,\cdot)$ is obtained from a matched pair of Novikov algebras $(A, A^\ast,l_A,r_A,l_{A^\ast},r_{A^\ast})$, where $\cdot$ is given by Eq. \eqref{e10}.
By Theorem \ref{t1}, we have
\begin{eqnarray*}
\langle l_A(a)f,b\rangle&=&\langle a\cdot f,b\rangle=\omega(b,a\cdot f)=\omega(b,a\cdot f+f\cdot a)-\omega(b,f\cdot a)\\
&=&\omega(f,a\vartriangleright b)+\omega(f,b\vartriangleleft a)\\
&=&\langle (L_{\vartriangleright}^*+R_{\vartriangleleft}^*)(a)f,b\rangle,\\
\end{eqnarray*}
and
\begin{eqnarray*}
\langle r_A(a)f,b\rangle&=&\langle f\cdot a,b\rangle=\omega(b,f\cdot a)\\
&=&\omega(b\vartriangleleft a,f)=\langle R_{\vartriangleleft}(a)b,f\rangle\\
&=&-\langle R_{\vartriangleleft}^*(a)f,b\rangle,\quad\quad \quad a,b\in A, f\in A^*.
\end{eqnarray*}
Therefore, we obtain $l_A=L_{\vartriangleright}^*+R_{\vartriangleleft}^*$ and $r_A=-R_{\vartriangleleft}^*$. Similarly, we can prove $l_{A^\ast}=L_{\vartriangleright_*}^*+R_{\vartriangleleft_*}^*$ and $r_{A^\ast}=-R_{\vartriangleleft_*}^*$.

The proof is completed.
\end{proof}

Next, we introduce the definition of pre-Novikov coalgebras.
\begin{definition}\label{co1}
A \textbf{pre-Novikov coalgebra} is a vector space $A$ with linear maps $\alpha, \beta:A\rightarrow A\otimes A$ satisfying
\begin{eqnarray}
&&\label{cob1}(\alpha\otimes \id)\alpha(a)+(\tau\otimes \id)(\id\otimes \alpha)\beta(a)-(\id\otimes(\alpha+\beta))\alpha(a)-(\tau\otimes \id)(\beta\otimes \id)\alpha(a)=0,\\
&&\label{cob2}(\id\otimes\beta)\beta(a)+(\tau\otimes \id)((\alpha+\beta)\otimes \id)\beta(a)-((\alpha+\beta)\otimes \id)\beta(a)-(\tau\otimes \id)(\id\otimes \beta)\beta(a)=0,\\
&&\label{cob3}(\id \otimes \tau)(\beta\otimes \id)\alpha(a)-((\alpha+\beta)\otimes \id)\beta(a)=0,\\
&&\label{cob4}(\id\otimes\tau)(\alpha\otimes \id)\alpha(a)-(\alpha\otimes \id)\alpha(a)=0,\quad  a\in A.
\end{eqnarray}
Denote it by $(A,\alpha,\beta)$.
\end{definition}

Let $V$ and $W$ be vector spaces. Suppose that $\phi:V\rightarrow W$ is a linear map. Then there is an induced dual linear map $\phi^*:W^*\rightarrow V^*$ defined by
\begin{align}
\langle \phi^*(f),v\rangle=\langle f, \phi(v)\rangle,\quad  v\in V,f\in W^*.\label{a1}
\end{align}
It is easy to see that
$(A,\alpha,\beta)$ is a pre-Novikov coalgebra if and only if $(A^*,\alpha^*,\beta^*)$ is a pre-Novikov algebra.

\begin{proposition}\label{p2}
Let $(A,\vartriangleleft,\vartriangleright)$ be a pre-Novikov algebra and $(A,\circ )$ be the associated Novikov algebra of $(A,\vartriangleleft,\vartriangleright)$. Suppose that there is a pre-Novikov algebra $(A^\ast, \vartriangleleft_*, \vartriangleright_*)$ which is induced from a pre-Novikov coalgebra $(A, \alpha, \beta)$, whose associated Novikov algebra is denoted by $(A^\ast, \circ_\ast)$. Then $(A,A^*,L_{\vartriangleright}^*+R_{\vartriangleleft}^*,-R_{\vartriangleleft}^*,L_{\vartriangleright_*}^*+R_{\vartriangleleft_*}^*,-R_{\vartriangleleft_*}^*)$ is a matched pair of Novikov algebras if and only if the following equations are satisfied:
\begin{align}
(\tau\alpha+\beta)(a\circ b)&=((L_{\vartriangleright}+2R_{\vartriangleleft})(a)\otimes \id+ \id\otimes L_{\circ}(a))(\tau\alpha+\beta)(b)\label{lfd1}\\
&\quad+(\id\otimes R_{\circ}(b))(2\tau\alpha+\beta)(a)-(R_{\vartriangleleft}(b)\otimes\id)\tau\alpha (a),\nonumber\\
\tau\alpha(a\circ b-b\circ a)&=((L_{\vartriangleright}+R_{\vartriangleleft})(a)\otimes\id+\id\otimes L_{\circ}(a))\tau\alpha (b)\label{lfd2}\\
&\quad-((L_{\vartriangleright}+R_{\vartriangleleft})(b)\otimes\id+\id\otimes L_{\circ}(b))\tau\alpha (a),\nonumber\\
(\alpha+\beta)(a\vartriangleright b+b\vartriangleleft a)&=(\id\otimes (R_{\vartriangleright}+L_{\vartriangleleft})(b))(2\tau\alpha+\beta)(a)-(L_{\vartriangleleft}(b)\otimes\id)\alpha (a)\label{lfd3}\\
&\quad+((L_{\vartriangleright}+2R_{\vartriangleleft})(a)\otimes\id+\id\otimes(L_{\vartriangleright}+R_{\vartriangleleft})(a))(\alpha+\beta)(b),\nonumber\\
(\alpha+\beta-\tau\alpha-\tau\beta)(b\vartriangleleft a)&=(\id\otimes L_{\vartriangleleft}(b))(\tau\alpha+\beta)(a)-( L_{\vartriangleleft}(b)\otimes \id)(\alpha+\tau\beta)(a)\label{lfd4}\\
&\quad+(\id\otimes R_{\vartriangleleft}(a))(\alpha+\beta)(b)-(R_{\vartriangleleft}(a)\otimes\id)(\tau\alpha+\tau\beta)(b),\nonumber\\
(\id\otimes R_{\circ}(b)-R_{\vartriangleleft}&(b)\otimes\id)(\tau\alpha+\beta)(a)=(\id\otimes R_{\circ}(a)-R_{\vartriangleleft}(a)\otimes\id)(\tau\alpha+\beta)(b),\label{lfd5}\\
\tau\alpha(a\circ b)&=(\id\otimes R_{\circ}(b))\tau\alpha(a)+((L_{\vartriangleright}+R_{\vartriangleleft})(a)\otimes\id)(\tau\alpha+\beta)(b),\label{lfd6}\\
(\id\otimes(R_{\vartriangleright}+L_{\vartriangleleft})(b))\tau\alpha(a)&=((R_{\vartriangleright}+L_{\vartriangleleft})(b)\otimes \id)\alpha(a)+(\id \otimes(L_{\vartriangleright}+R_{\vartriangleleft})(a))(\tau\alpha+\tau\beta)(b)\label{lfd7}\\
&\quad-((L_{\vartriangleright}+R_{\vartriangleleft})(a)\otimes\id)(\alpha+\beta)(b),\nonumber\\
(\alpha+\beta)(b\vartriangleleft a)&=(\id\otimes(R_{\vartriangleright}+L_{\vartriangleleft})(b))(\tau\alpha+\beta)(a)+(R_{\vartriangleleft}(a)\otimes\id)(\alpha+\beta)(b),\;\;a, b\in A.\label{lfd8}
\end{align}
\end{proposition}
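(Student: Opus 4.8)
The plan is to reduce the eight matched-pair conditions \eqref{e2}--\eqref{e9} of Proposition \ref{d1}, specialized to $l_A=L_{\vartriangleright}^*+R_{\vartriangleleft}^*$, $r_A=-R_{\vartriangleleft}^*$, $l_{A^*}=L_{\vartriangleright_*}^*+R_{\vartriangleleft_*}^*$, $r_{A^*}=-R_{\vartriangleleft_*}^*$, to the coalgebra identities \eqref{lfd1}--\eqref{lfd8} by dualizing. The first step is to set up a translation dictionary. Since $(A^*,\vartriangleleft_*,\vartriangleright_*)$ is induced from $(A,\alpha,\beta)$, for all $f,g\in A^*$ and $c\in A$ one has $\langle f\vartriangleleft_* g,c\rangle=\langle f\otimes g,\alpha(c)\rangle$, $\langle f\vartriangleright_* g,c\rangle=\langle f\otimes g,\beta(c)\rangle$, hence $\langle f\circ_* g,c\rangle=\langle f\otimes g,(\alpha+\beta)(c)\rangle$. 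Combining these with the defining relation \eqref{e14} of the dual maps, the basic formulas I would record first are
\begin{align*}
\langle h,l_{A^*}(x)c\rangle=-\langle x\otimes h,(\tau\alpha+\beta)(c)\rangle,\quad \langle h,r_{A^*}(x)c\rangle=\langle x\otimes h,\tau\alpha(c)\rangle,
\end{align*}
for $x,h\in A^*$, $c\in A$, together with the companions $\langle l_A(a)x,b\rangle=-\langle x,a\vartriangleright b+b\vartriangleleft a\rangle$ and $\langle r_A(a)x,b\rangle=\langle x,b\vartriangleleft a\rangle$. The flip $\tau$ is precisely the transposition produced by dualizing the right-multiplication operators $R_{\vartriangleleft_*}^*$ and $R_{\vartriangleleft}^*$, whereas the left-multiplication duals contribute no flip.

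With this dictionary in hand, the second step is mechanical. Each matched-pair condition is an equality of elements of $A$ (for \eqref{e2}, \eqref{e3}, \eqref{e6}, \eqref{e7}) or of $A^*$ (for \eqref{e4}, \eqref{e5}, \eqref{e8}, \eqref{e9}). For the $A$-valued ones I would pair with a further covector $h\in A^*$ and read off the coefficient of $x\otimes h$; for the $A^*$-valued ones I would evaluate at a vector $b\in A$ and read off the coefficient of $x\otimes y$. Either way the result is a tensor identity in $A\otimes A$ in the remaining free variables $a,b\in A$. For instance, pairing \eqref{e2} with $h$ sends its left-hand side $l_{A^*}(x)(a\circ b)$ to $-\langle x\otimes h,(\tau\alpha+\beta)(a\circ b)\rangle$, while a right-hand term such as $a\circ(l_{A^*}(x)b)$ becomes $-\langle x\otimes h,(\id\otimes L_{\circ}(a))(\tau\alpha+\beta)(b)\rangle$; collecting all the terms reproduces \eqref{lfd1}. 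I expect the matching to be one-to-one, with \eqref{e2}--\eqref{e9} yielding \eqref{lfd1}--\eqref{lfd8} in order: the antisymmetric conditions \eqref{e3}, \eqref{e5} give the antisymmetrized forms \eqref{lfd2}, \eqref{lfd4}, and the conditions \eqref{e6}, \eqref{e8} that are symmetric under $a\leftrightarrow b$ (resp. $x\leftrightarrow y$) give the symmetric identities \eqref{lfd5}, \eqref{lfd7}. Because pairing against all covectors and evaluating at all vectors is faithful, each of these term-by-term matches is already a biconditional, so the two directions of the proposition are proved simultaneously.

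The main obstacle will be the bookkeeping of this dualization rather than any conceptual difficulty. One must track the sign introduced by each dual map from \eqref{e14}, and—more delicately—decide for every term which of the two tensor factors each operator lands in and whether a flip $\tau$ is created, since $R_{\vartriangleleft}^*$ and $R_{\vartriangleleft_*}^*$ swap the legs while the left-multiplication duals do not. The terms requiring the most care are those in which an $A$-product or $A^*$-product is composed with a representation map, such as $a\circ(l_{A^*}(x)b)$, $b\circ(r_{A^*}(x)a)$, or $(l_A(a)x)\bullet y$: dualizing these produces a coproduct precomposed or postcomposed with a multiplication operator, for example $(\id\otimes L_{\circ}(a))(\tau\alpha+\beta)(b)$ or $(R_{\vartriangleleft}(a)\otimes\id)(\tau\alpha+\tau\beta)(b)$, and keeping these compositions in the correct order and slot is where computational errors are most likely to occur.
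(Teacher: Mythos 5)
Your proposal is correct and follows essentially the same route as the paper: reduce via Proposition \ref{d1} to checking that each of Eqs. \eqref{e2}--\eqref{e9}, with $l_A=L_{\vartriangleright}^*+R_{\vartriangleleft}^*$, $r_A=-R_{\vartriangleleft}^*$, $l_{A^*}=L_{\vartriangleright_*}^*+R_{\vartriangleleft_*}^*$, $r_{A^*}=-R_{\vartriangleleft_*}^*$, is equivalent to the corresponding Eq. \eqref{lfd1}--\eqref{lfd8} by pairing against vectors/covectors and using nondegeneracy, which is exactly what the paper does (it works out only the case \eqref{e9} $\Leftrightarrow$ \eqref{lfd8} and declares the rest similar). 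Your translation dictionary, including the signs from Eq. \eqref{e14} and the flip $\tau$ arising from the right-multiplication duals, checks out against the paper's computation.
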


\begin{proof}
By Proposition \ref{d1}, it suffices to show that Eqs. \eqref{lfd1}-\eqref{lfd8} are equivalent to Eqs. \eqref{e2}-\eqref{e9} respectively when $B=A^*$, $l_A=L_{\vartriangleright}^*+R_{\vartriangleleft}^*$, $r_A=-R_{\vartriangleleft}^*$,
$l_{A^\ast}=L_{\vartriangleright_*}^*+R_{\vartriangleleft_*}^*$ and $r_{A^{\ast}}=-R_{\vartriangleleft_*}^*$.

\delete{As an example we give an explicit proof that Eq. \eqref{lfd8} holds if and only if Eq. \eqref{e9} holds.}
Let $a\in A$ and $f$, $g\in A^\ast$. By Eq. \eqref{e9}, we have
\begin{align*}
- R_{\vartriangleleft}^*(a)(f\circ_* g)=&(L_{\vartriangleright}^*+R_{\vartriangleleft}^*)((L_{\vartriangleright_*}^*+R_{\vartriangleleft_*}^*)(f)a)g
-(R_{\vartriangleleft}^*(a)f)\circ_*g\\
=& (L_{\vartriangleright}^*+R_{\vartriangleleft}^*)((L_{\vartriangleright_*}^*+R_{\vartriangleleft_*}^*)(f)a)g
-(L_{\vartriangleright_*}+L_{\vartriangleleft_*})(R_{\vartriangleleft}^*(a)f)g.
\end{align*}
Let both sides of the above equation act on an arbitrary element $b\in A$. Then we have
$$-\langle R_{\vartriangleleft}^*(a)(f\circ_* g),b\rangle=\langle (L_{\vartriangleright}^*+R_{\vartriangleleft}^*)((L_{\vartriangleright_*}^*+R_{\vartriangleleft_*}^*)(f)a)g,b\rangle
-\langle(L_{\vartriangleright_*}+L_{\vartriangleleft_*})(R_{\vartriangleleft}^*(a)f)g,b\rangle,$$
which is equivalent to the following equation
\begin{eqnarray}
\label{eq-q1}\langle f\otimes g,(\alpha+\beta)(b\vartriangleleft a)\rangle&=&\langle f\otimes g,(\id\otimes(R_{\vartriangleright}+L_{\vartriangleleft})(b))(\tau\alpha+\beta)(a)\rangle\\
&&\quad+\langle f\otimes g, (R_{\vartriangleleft}(a)\otimes\id)(\alpha+\beta)(b)\rangle.\nonumber
\end{eqnarray}
Then it is easy to see that Eq. \eqref{lfd8} holds if and only if Eq. \eqref{e9} holds. The others can be proved similarly.
\end{proof}

\begin{definition}
Let $(A, \lhd, \rhd)$ be a pre-Novikov algebra and $(A,\alpha,\beta)$ be a pre-Novikov coalgebra. If they also satisfy Eqs. \eqref{lfd1}-\eqref{lfd8}, then we call $(A,\vartriangleleft,\vartriangleright,\alpha,\beta)$ a \textbf{pre-Novikov bialgebra}.
\end{definition}

By Propositions \ref{p1} and \ref{p2}, we obtain the following conclusion.

\begin{theorem}\label{t4}
Let $(A,\vartriangleleft,\vartriangleright)$ be a pre-Novikov algebra and $(A,\circ )$ be the associated Novikov algebra of $(A,\vartriangleleft,\vartriangleright)$. Suppose that there is a pre-Novikov algebra $(A^\ast, \vartriangleleft_*, \vartriangleright_*)$ which is induced from a pre-Novikov coalgebra $(A, \alpha, \beta)$, whose associated Novikov algebra is denoted by $(A^\ast, \circ_\ast)$. Then the following conditions are equivalent.
\begin{enumerate}
\item There is a double construction of quasi-Frobenius Novikov algebras associated to $(A, \vartriangleleft,$ $\vartriangleright)$ and $(A^\ast, \vartriangleleft_\ast,\vartriangleright_\ast)$;
\item $(A,A^*,L_{\vartriangleright}^*+R_{\vartriangleleft}^*,-R_{\vartriangleleft}^*,L_{\vartriangleright_*}^*+R_{\vartriangleleft_*}^*,-R_{\vartriangleleft_*}^*)$ is a matched pair of Novikov algebras;
 \item $(A,\vartriangleleft,\vartriangleright,\alpha,\beta)$ is a pre-Novikov bialgebra.
 \end{enumerate}
\end{theorem}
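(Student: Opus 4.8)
The plan is to prove Theorem \ref{t4} by establishing the three-way equivalence through the two bridging results already in hand, namely Proposition \ref{p1} and Proposition \ref{p2}. The cleanest route is to prove the chain of implications $(1) \Leftrightarrow (2)$ and $(2) \Leftrightarrow (3)$, after which the full equivalence of $(1)$, $(2)$, $(3)$ follows immediately by transitivity. I would not attempt to relate $(1)$ and $(3)$ directly, since the matched-pair condition $(2)$ is precisely the intermediate object that makes both comparisons tractable.

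The equivalence $(1) \Leftrightarrow (2)$ is exactly the content of Proposition \ref{p1}: given the pre-Novikov algebra $(A,\vartriangleleft,\vartriangleright)$ with associated Novikov algebra $(A,\circ)$ and the dual pre-Novikov algebra $(A^*,\vartriangleleft_*,\vartriangleright_*)$ with associated Novikov algebra $(A^*,\circ_*)$, that proposition states that a double construction of quasi-Frobenius Novikov algebras associated to $(A,\vartriangleleft,\vartriangleright)$ and $(A^*,\vartriangleleft_\ast,\vartriangleright_\ast)$ exists if and only if $(A,A^*,L_{\vartriangleright}^*+R_{\vartriangleleft}^*,-R_{\vartriangleleft}^*,L_{\vartriangleright_*}^*+R_{\vartriangleleft_*}^*,-R_{\vartriangleleft_*}^*)$ is a matched pair of Novikov algebras. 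Since the hypotheses of Theorem \ref{t4} supply precisely the data required by Proposition \ref{p1}, this equivalence can simply be invoked. The only point to verify is that the dual pre-Novikov structure arising from the coalgebra $(A,\alpha,\beta)$ in the statement of Theorem \ref{t4} coincides with the abstract dual pre-Novikov algebra appearing in Proposition \ref{p1}; this is immediate from the remark, preceding Proposition \ref{p2}, that $(A,\alpha,\beta)$ is a pre-Novikov coalgebra if and only if $(A^*,\alpha^*,\beta^*)$ is a pre-Novikov algebra.

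The equivalence $(2) \Leftrightarrow (3)$ is obtained from Proposition \ref{p2} together with the definition of a pre-Novikov bialgebra. By definition, $(A,\vartriangleleft,\vartriangleright,\alpha,\beta)$ is a pre-Novikov bialgebra exactly when $(A,\vartriangleleft,\vartriangleright)$ is a pre-Novikov algebra, $(A,\alpha,\beta)$ is a pre-Novikov coalgebra, and Eqs. \eqref{lfd1}--\eqref{lfd8} hold. Under the standing hypotheses, the first two conditions are given, so being a pre-Novikov bialgebra reduces to the validity of Eqs. \eqref{lfd1}--\eqref{lfd8}. Proposition \ref{p2} asserts that these eight compatibility equations hold if and only if $(A,A^*,L_{\vartriangleright}^*+R_{\vartriangleleft}^*,-R_{\vartriangleleft}^*,L_{\vartriangleright_*}^*+R_{\vartriangleleft_*}^*,-R_{\vartriangleleft_*}^*)$ is a matched pair of Novikov algebras, which is statement $(2)$. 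Hence $(2)$ and $(3)$ are equivalent.

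There is no genuine obstacle here: the theorem is a formal assembly of the two preceding propositions, and all the hard computational work has already been carried out in proving Proposition \ref{p1} (the double construction / matched pair correspondence) and Proposition \ref{p2} (the translation of the matched-pair axioms \eqref{e2}--\eqref{e9} into the coalgebraic equations \eqref{lfd1}--\eqref{lfd8}). The main thing to be careful about is bookkeeping: one must confirm that the same representations $L_{\vartriangleright}^*+R_{\vartriangleleft}^*$, $-R_{\vartriangleleft}^*$, $L_{\vartriangleright_*}^*+R_{\vartriangleleft_*}^*$, $-R_{\vartriangleleft_*}^*$ appear in both propositions and that the induced dual structures are identified consistently, so that condition $(2)$ is literally the same statement in both equivalences and the transitivity argument is legitimate.
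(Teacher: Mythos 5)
Your proposal is correct and coincides with the paper's own argument: the paper proves Theorem \ref{t4} precisely by combining Proposition \ref{p1} (giving $(1)\Leftrightarrow(2)$) with Proposition \ref{p2} together with the definition of a pre-Novikov bialgebra (giving $(2)\Leftrightarrow(3)$), exactly as you describe. Your bookkeeping remarks about matching the dual structures and representations are the right points to check, and they hold as you indicate.
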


Finally, we present an example of pre-Novikov bialgebras and quasi-Frobenius Novikov algebras.
\begin{example}\label{ex1}
Let $(A=\mathbf{k}e_1\oplus \mathbf{k}e_2,\vartriangleleft,\vartriangleright)$ be a two-dimensional vector space with binary operations
$\vartriangleleft,\vartriangleright$ given by
\begin{flalign*}
&e_i\vartriangleright e_j=0, \;\;i,j\in\{1,2\},\\
e_1\vartriangleleft e_1=e_1,\quad &e_1\vartriangleleft e_2=e_2,\quad e_2\vartriangleleft e_1=e_2,\quad e_2\vartriangleleft e_2 =0.
\end{flalign*}
One can directly check that $(A,\vartriangleleft,\vartriangleright)$ is a pre-Novikov algebra. Define linear maps $\alpha,\beta:A\rightarrow A\otimes A$ by
\begin{flalign*}
&\alpha(e_1)=e_2\otimes e_2, \quad \ \ \alpha(e_2)=0,\\
&\beta(e_1)=-e_2\otimes e_2, \quad  \beta(e_2)=0.
\end{flalign*}
Then it is easy to check that $(A,\vartriangleleft,\vartriangleright,\alpha,\beta)$ is a pre-Novikov bialgebra.

Let $\{e_1^*,e_2^*\}$ be the dual basis of $A^\ast$. Then by Theorem \ref{t4}, there is a double construction of quasi-Frobenius Novikov
algebras $(A\oplus A^*, \cdot, \omega(\cdot,\cdot))$ which is defined by non-zero products
\begin{eqnarray*}
&&e_1\cdot e_1=e_1,\;\;e_1\cdot e_2=e_2,\;\; e_2\cdot e_1=e_2,\;\;
e_1\cdot e_1^*=-e_1^*,\\
&&e_1^*\cdot e_1=e_1^*,\;\;e_1\cdot e_2^*=e_2-e_2^*,\;e_2^*\cdot e_1=e_2^*,\;e_2\cdot e_2^*=-e_1^*,\;e_2^*\cdot e_2=e_1^*,
\end{eqnarray*}
and the nondegenerate bilinear form $\omega (\cdot,\cdot)$ given by
\begin{flalign*}
&\omega (e_1^*,e_1)=\omega (e_2^*,e_2)=-\omega (e_1,e_1^*)=-\omega (e_2,e_2^*)=1,\\
&\omega (e_1,e_2^*)=\omega (e_2,e_1^*)=\omega (e_1^*,e_2)=\omega (e_2^*,e_1)=\omega(e_i,e_j)=\omega(e_i^*,e_j^*)=0,\;\;i, j\in \{1,2\}.
\end{flalign*}
\end{example}

\section{Pre-Novikov Yang-Baxter equation}
In this section, we introduce the definition of pre-Novikov Yang-Baxter equation whose symmetric solutions can be used to construct pre-Novikov bialgebras. Moreover, the operator forms of pre-Novikov Yang-Baxter equation are investigated.

Let $(A,\vartriangleleft,\vartriangleright)$ be a pre-Novikov algebra and $(A,\circ)$ be the associated Novikov algebra. For convenience, we define two binary operations $\odot$ and $\star$ on $A$ by
$$a\odot b:=a\vartriangleright b+b\vartriangleleft a,\qquad a\star b:=a\circ b+b\circ a, \quad  a,b \in A.$$
\begin{lemma}\label{l1}
Let $(A,\vartriangleleft,\vartriangleright)$ be a pre-Novikov algebra and $r\in A\otimes A$. Suppose that $\alpha,\beta:A\rightarrow A\otimes A$ are linear maps defined by
\begin{flalign}
\alpha(a):&=(L_{\circ}(a)\otimes \id+\id \otimes(L_{\vartriangleright}+R_{\vartriangleleft})(a))\tau r,\label{e11}\\
\beta(a):&=-(L_{\vartriangleright}(a)\otimes \id+\id \otimes (L_{\circ}+R_{\circ})(a))r, \quad a\in A.\label{e12}
\end{flalign}
Then $\alpha,\beta$ satisfy Eqs. \eqref{lfd1}-\eqref{lfd8} if and only if the following conditions are satisfied:
\begin{align}
&\big(((L_{\vartriangleright}+2R_{\vartriangleleft})(a)\otimes \id+\id\otimes (L_{\vartriangleright}+R_{\vartriangleleft})(a))(L_{\circ}(b)\otimes \id+\id\otimes (L_{\vartriangleright}+R_{\vartriangleleft})(b))\label{ld1}\\
&\;\;-(L_{\vartriangleleft}(b)\otimes \id)
(L_{\circ}(a)\otimes \id+\id\otimes(L_{\vartriangleright}+R_{\vartriangleleft})(a))\nonumber\\
&\;\;-(L_{\circ}(a\vartriangleright b+b\vartriangleleft a)\otimes \id+\id\otimes(L_{\vartriangleright}+R_{\vartriangleleft})(a\vartriangleright b+b\vartriangleleft a))\big)(\tau r-r)=0,\nonumber\\
&\big((\id\otimes R_{\vartriangleleft}(a))(L_{\circ}(b)\otimes \id+\id\otimes
(L_{\vartriangleright}+R_{\vartriangleleft})(b))
+(R_{\vartriangleleft}(a)\otimes \id)((L_{\circ}+R_{\circ})(b)\otimes \id\label{ld2}\\
&\;\;+\id\otimes L_{\vartriangleright}(b))
-(L_{\vartriangleleft}(b)\otimes \id)
(\id\otimes R_{\vartriangleleft}(a)-R_{\circ}(a)\otimes \id)\nonumber\\
&\;\;-(\id\otimes(2L_{\vartriangleright}
+R_{\vartriangleleft})(b\vartriangleleft a)+(2 L_{\circ}+R_{\circ})(b\vartriangleleft a)\otimes \id)\big)(\tau r-r)=0,\nonumber\\
&\big(((R_{\vartriangleright}+L_{\vartriangleleft})(b)\otimes \id)(L_{\circ}(a)\otimes \id+\id\otimes(L_{\vartriangleright}+R_{\vartriangleleft})(a))-
(\id \otimes(L_{\vartriangleright}+R_{\vartriangleleft})(a))(\id\otimes L_{\vartriangleright}(b)\label{ld3}\\
&\;\;+(L_{\circ}+R_{\circ})(b)\otimes \id)
-((L_{\vartriangleright}+R_{\vartriangleleft})(a)\otimes \id)(L_{\circ}(b)\otimes \id+\id\otimes(L_{\vartriangleright}+R_{\vartriangleleft})(b))\big)(\tau r-r)=0,\nonumber\\
&\big((R_{\vartriangleleft}(a)\otimes \id)(L_{\circ}(b)\otimes \id+\id\otimes(L_{\vartriangleright}+R_{\vartriangleleft})(b))\label{ld4}\\
&\;\;-(L_{\circ}(b\vartriangleleft a)\otimes \id+\id\otimes(L_{\vartriangleright}+R_{\vartriangleleft})(b\vartriangleleft a))\big)(\tau r-r)=0,\;\;\;a,b\in A.\nonumber
\end{align}
\end{lemma}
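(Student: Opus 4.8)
The plan is to substitute the coboundary expressions \eqref{e11} and \eqref{e12} into each of Eqs.~\eqref{lfd1}--\eqref{lfd8}, to express every side as an operator on $A\otimes A$ applied to $r$ and to $\tau r$, and then to reduce each equation using the defining identities of the pre-Novikov algebra $(A,\vartriangleleft,\vartriangleright)$ together with the representation property of $(A,L_{\vartriangleright},R_{\vartriangleleft})$ recorded in Proposition~\ref{an}.

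The first step is to precompute the combinations of $\alpha$ and $\beta$ that actually occur. The decisive point is that applying $\tau$ interchanges the two tensor legs, so that, using $\tau\circ(X\otimes Y)=(Y\otimes X)\circ\tau$ and $\tau^{2}=\id$, the expression $\tau\alpha$ becomes an operator on $r$ alone:
\[
\tau\alpha(a)=\big((L_{\vartriangleright}+R_{\vartriangleleft})(a)\otimes\id+\id\otimes L_{\circ}(a)\big)r,
\]
\[
(\tau\alpha+\beta)(a)=\big(R_{\vartriangleleft}(a)\otimes\id-\id\otimes R_{\circ}(a)\big)r,
\]
whereas combinations in which $\alpha$ enters directly (not as $\tau\alpha$), such as $\alpha+\beta$, genuinely involve both $r$ and $\tau r$. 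This dichotomy splits the eight equations into two groups: in \eqref{lfd1}, \eqref{lfd2}, \eqref{lfd5} and \eqref{lfd6} only the $\tau\alpha$-type combinations appear, so after substitution each becomes an identity among operators applied to $r$ alone; in \eqref{lfd3}, \eqref{lfd4}, \eqref{lfd7} and \eqref{lfd8} the term $\alpha$ enters directly, so both $r$ and $\tau r$ survive.

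For the first group I would show that the operator applied to $r$ vanishes identically, so that these four equations impose no condition on $r$. For instance, in \eqref{lfd1} the terms whose second tensor leg is $r_{2}$ collapse, after one application of $a\vartriangleright(b\vartriangleleft c)=(a\vartriangleright b)\vartriangleleft c+b\vartriangleleft(a\circ c)-(b\vartriangleleft a)\vartriangleleft c$, to $2\big((r_{1}\vartriangleleft a)\vartriangleleft b-(r_{1}\vartriangleleft b)\vartriangleleft a\big)\otimes r_{2}$ (writing $r=r_{1}\otimes r_{2}$ with summation suppressed), which vanishes by $(a\vartriangleleft b)\vartriangleleft c=(a\vartriangleleft c)\vartriangleleft b$, while the remaining terms cancel by the Novikov identities; similarly \eqref{lfd5} reduces to the commutators $[R_{\vartriangleleft}(a),R_{\vartriangleleft}(b)]\otimes\id$ and $\id\otimes[R_{\circ}(a),R_{\circ}(b)]$ applied to $r$, both of which vanish because right multiplications commute in a Novikov algebra and $\vartriangleleft$-right multiplications commute by the fourth pre-Novikov axiom. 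For the second group I would instead group the terms according to whether they act on $r$ or on $\tau r$ and show, again via the pre-Novikov axioms and the representation identities, that the operator coefficient of $\tau r$ coincides with that of $r$; each equation then takes the form $(\cdots)(\tau r-r)=0$ and matches exactly one of \eqref{ld1}--\eqref{ld4}. Concretely I expect \eqref{lfd3}, \eqref{lfd4}, \eqref{lfd7}, \eqref{lfd8} to reduce to \eqref{ld1}, \eqref{ld2}, \eqref{ld3}, \eqref{ld4} respectively, the products $a\vartriangleright b+b\vartriangleleft a$ and $b\vartriangleleft a$ appearing on both sides confirming the match; for example \eqref{lfd8} reduces to \eqref{ld4} once $(a\circ b)\vartriangleright c=(a\vartriangleright c)\vartriangleleft b$ and $(a\vartriangleleft b)\vartriangleleft c=(a\vartriangleleft c)\vartriangleleft b$ are used to identify the coefficients of $\tau r$ and of $r$.

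The main obstacle is the bookkeeping rather than any conceptual difficulty: each of \eqref{lfd1}--\eqref{lfd8} expands into more than a dozen operator compositions on $A\otimes A$, and isolating either the vanishing of the $r$-coefficient (first group) or the common factor $\tau r-r$ (second group) requires repeated and careful use of the four pre-Novikov axioms and the four Novikov representation identities, expanding $L_{\circ}=L_{\vartriangleright}+L_{\vartriangleleft}$ and $R_{\circ}=R_{\vartriangleright}+R_{\vartriangleleft}$ whenever an axiom is invoked. Since the manipulation is entirely mechanical once the combinations above are in hand, I would carry out \eqref{lfd8} in full as the representative case and indicate that the remaining seven equations are treated by the same procedure.
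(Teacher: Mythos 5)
Your proposal is correct and takes essentially the same route as the paper's own proof: the same reduction of $\tau\alpha$ and $\tau\alpha+\beta$ to operators acting on $r$ alone, the same observation that Eqs.~\eqref{lfd1}, \eqref{lfd2}, \eqref{lfd5}, \eqref{lfd6} hold automatically, the same pairing of \eqref{lfd3}, \eqref{lfd4}, \eqref{lfd7}, \eqref{lfd8} with \eqref{ld1}--\eqref{ld4}, and even the same choice of \eqref{lfd8}~$\Leftrightarrow$~\eqref{ld4} as the representative case worked in full via the pre-Novikov axioms. The only difference is cosmetic: you justify the ``automatic'' group with explicit collapse computations where the paper simply asserts it is easy to check.
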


\begin{proof}
It is easy to check that $\alpha,\beta$ satisfy Eqs. \eqref{lfd1}, \eqref{lfd2}, \eqref{lfd5} and \eqref{lfd6} automatically. \delete{In fact, we only need to check the following correspondences:
\begin{center}
 Eq. \eqref{lfd3} $\Leftrightarrow $ Eq. \eqref{ld1}, \qquad Eq. \eqref{lfd4} $\Leftrightarrow$ Eq. \eqref{ld2},\\
 Eq. \eqref{lfd7} $\Leftrightarrow $ Eq. \eqref{ld3}, \qquad Eq. \eqref{lfd8} $\Leftrightarrow$ Eq. \eqref{ld4}.
\end{center}}

By Eq. \eqref{lfd8}, we have
\begin{eqnarray*}
&&(\id\otimes(R_{\vartriangleright}+L_{\vartriangleleft})(b))(\tau\alpha+\beta)(a)+(R_{\vartriangleleft}(a)\otimes \id)(\alpha+\beta)(b)-(\alpha+\beta)(b\vartriangleleft a)\\
&&\;\;=(\id\otimes(R_{\vartriangleright}+L_{\vartriangleleft})(b))(R_{\vartriangleleft}(a)\otimes \id-\id\otimes R_{\circ}(a))r\\
&&\;\;\quad+(R_{\vartriangleleft}(a)\otimes \id)(L_{\circ}(b)\otimes \id+\id\otimes (L_{\vartriangleright}+R_{\vartriangleleft})(b))\tau r\\
&&\;\;\quad-(R_{\vartriangleleft}(a)\otimes \id)(L_{\vartriangleright}(b)\otimes \id+\id\otimes(L_{\circ}+R_{\circ})(b))r\\
&&\;\;\quad-(L_{\circ}(b  \vartriangleleft a)\otimes \id+\id\otimes(L_{\vartriangleright}+R_{\vartriangleleft})(b  \vartriangleleft a) )\tau r\\
&&\;\;\quad+(L_{\vartriangleright}(b \vartriangleleft a)\otimes \id+\id\otimes(L_{\circ}+R_{\circ})(b  \vartriangleleft a))r\\
&&\;\;=((R_{\vartriangleleft}(a)\otimes \id)(L_{\circ}(b)\otimes \id+\id\otimes(L_{\vartriangleright}+R_{\vartriangleleft})(b))\\
&&\;\;\quad-L_{\circ}(b\vartriangleleft a)\otimes \id-\id\otimes(L_{\vartriangleright}+R_{\vartriangleleft})(b\vartriangleleft a))\tau r\\
&&\;\;\quad+(\id\otimes(-(R_{\vartriangleright}+L_{\vartriangleleft})(b)R_{\circ}(a)+(L_{\circ}+R_{\circ})(b \vartriangleleft  a))\\
&&\;\;\quad+(-R_{\vartriangleleft}(a)L_{\vartriangleright}(b)+L_{\vartriangleright}(b  \vartriangleleft a))\otimes \id-R_{\vartriangleleft}(a)\otimes( L_{\vartriangleright}+R_{\vartriangleleft})(b) )r.
\end{eqnarray*}
By Definition \ref{pNov}, for all $a,b,c\in A$, we have
\begin{eqnarray*}
&&-(c\circ a)\odot b+c\star (b\vartriangleleft a)=(b\vartriangleleft a)\odot c,
\end{eqnarray*}
and
\begin{eqnarray*}
&&-(b \vartriangleright c)\vartriangleleft a+(b\vartriangleleft a) \vartriangleright c=-(b\vartriangleright a)\vartriangleright c.
\end{eqnarray*}
Hence we obtain
\begin{eqnarray*}
&&-(R_{\vartriangleright}+L_{\vartriangleleft})(b)R_{\circ}(a)+(L_{\circ}+R_{\circ})(b \vartriangleleft  a)=(L_{\vartriangleright}+R_{\vartriangleleft})(b\vartriangleleft a),
\end{eqnarray*}
and
\begin{eqnarray*}
&&-R_{\vartriangleleft}(a)L_{\vartriangleright}(b)+L_{\vartriangleright}(b  \vartriangleleft a)=-L_{\vartriangleright}(b\vartriangleleft a).
\end{eqnarray*}
Therefore, one gets
\begin{eqnarray*}
&&(\id\otimes(R_{\vartriangleright}+L_{\vartriangleleft})(b))(\tau\alpha+\beta)(a)+(R_{\vartriangleleft}(a)\otimes \id)(\alpha+\beta)(b)-(\alpha+\beta)(b\vartriangleleft a)\\
&&\;\;=((R_{\vartriangleleft}(a)\otimes \id)(L_{\circ}(b)\otimes \id+\id\otimes(L_{\vartriangleright}+R_{\vartriangleleft})(b))
-L_{\circ}(b\vartriangleleft a)\otimes \id \\
&&\;\;\quad-\id\otimes(L_{\vartriangleright}+R_{\vartriangleleft})(b\vartriangleleft a))\tau r
+(\id\otimes (L_{\vartriangleright}+R_{\vartriangleleft})(b\vartriangleleft a)\\
&&\;\;\quad-R_{\vartriangleleft}(a)\otimes (L_{\vartriangleright}+R_{\vartriangleleft})(b)-L_{\vartriangleright}(b\vartriangleright a)\otimes \id )r\\
&&\;\;=((R_{\vartriangleleft}(a)\otimes \id)(L_{\circ}(b)\otimes \id+\id\otimes(L_{\vartriangleright}+R_{\vartriangleleft})(b))\\
&&\;\;\quad-(L_{\circ}(b\vartriangleleft a)\otimes \id+\id\otimes(L_{\vartriangleright}+R_{\vartriangleleft})(b\vartriangleleft a)))(\tau r-r)\\
&&\;\;\quad+(R_{\vartriangleleft}(a)L_{\circ}(b)\otimes \id+R_{\vartriangleleft}(a)\otimes (L_{\vartriangleright}+R_{\vartriangleleft})(b)
-L_{\circ}(b\vartriangleleft a)\otimes \id \\
&&\;\;\quad-\id\otimes(L_{\vartriangleright}+R_{\vartriangleleft})(b\vartriangleleft a)
+\id\otimes (L_{\vartriangleright}+R_{\vartriangleleft})(b\vartriangleleft a)\\
&&\;\;\quad-R_{\vartriangleleft}(a)\otimes (L_{\vartriangleright}+R_{\vartriangleleft})(b)-L_{\vartriangleright}(b\vartriangleright a)\otimes \id )r\\
&&\;\;=(R_{\vartriangleleft}(a)\otimes \id)(L_{\circ}(b)\otimes \id+\id\otimes(L_{\vartriangleright}+R_{\vartriangleleft})(b))\\
&&\;\;\quad-(L_{\circ}(b\vartriangleleft a)\otimes \id+\id\otimes(L_{\vartriangleright}+R_{\vartriangleleft})(b\vartriangleleft a))(\tau r-r)\\
&&\;\;\quad+(R_{\vartriangleleft}(a)
L_{\circ}(b)\otimes \id-L_{\circ}(b\vartriangleleft a)\otimes \id-L_{\vartriangleright}(b\vartriangleright a)\otimes \id)r.
\end{eqnarray*}
By Definition \ref{pNov}, for all $a,b,c\in A$, we have
\begin{eqnarray*}
&&(b\circ a)\vartriangleleft c-(b\vartriangleleft a)\circ c-(b\vartriangleright a)\vartriangleright c\\
&&\;\;=(b\vartriangleright c)\vartriangleleft a+(b\vartriangleleft c)\vartriangleleft a-(b\vartriangleleft a)\vartriangleleft c-(b\vartriangleleft a)\vartriangleright c-(b\vartriangleright a)\vartriangleright c\\
&&\;\;=((b\vartriangleright c)\vartriangleleft a-(b\vartriangleleft a)\vartriangleright c-(b\vartriangleright a)\vartriangleright c)+((b\vartriangleleft c)\vartriangleleft a-(b\vartriangleleft a)\vartriangleleft c)\\
&&\;\;=0.
\end{eqnarray*}
Hence we obtain
$$R_{\vartriangleleft}(a)
L_{\circ}(b)\otimes \id-L_{\circ}(b\vartriangleleft a)\otimes \id -L_{\vartriangleright}(b\vartriangleright a)\otimes \id=0.$$
Therefore  Eq. \eqref{lfd8} is equivalent to Eq. \eqref{ld4}.

One similarly verifies that  Eq. \eqref{lfd3} is equivalent to Eq. \eqref{ld1},  Eq. \eqref{lfd4} is equivalent to Eq. \eqref{ld2}, and Eq. \eqref{lfd7} is equivalent to Eq. \eqref{ld3}. Then the proof is completed.
\end{proof}

Let $V$ be a vector space with a binary operation $\ast$. Let $r=\sum\limits_{i}x_i\otimes y_i \in V\otimes V$ and $r^{'}=\sum\limits_{i}x_i^{'}\otimes y_i^{'} \in V\otimes V$. Set
\begin{eqnarray*}
r_{12}\ast r_{13}^{'}:=\sum_{i,j}x_i\ast x_j^{'}\otimes y_i\otimes y_j^{'},\;r_{12}\ast r_{23}^{'}:=\sum_{i,j}x_i\otimes y_i\ast x_j^{'} \otimes y_j^{'},\;r_{13}\ast r_{12}^{'}:=\sum_{i,j}x_i\ast x_j^{'}\otimes y_j^{'}\otimes y_i,\\
r_{13}\ast r_{21}^{'}:=\sum_{i,j}x_i\ast y_j^{'}\otimes x_j^{'}\otimes y_i,\;r_{13}\ast r_{23}^{'}:=\sum_{i,j}x_i\otimes x_j^{'}\otimes y_i\ast y_j^{'},\; r_{21}\ast r_{13}^{'}:=\sum_{i,j}y_i\ast x_j^{'}\otimes x_i\otimes y_j^{'},\\
r_{21}\ast r_{23}:=\sum_{i,j}y_i\otimes x_i\ast x_j^{'}\otimes y_j^{'},\; r_{21}\ast r_{31}^{'}:=\sum_{i,j}y_i\ast y_j^{'}\otimes x_i\otimes x_j^{'},\; r_{21}\ast r_{32}^{'}:=\sum_{i,j}y_i\otimes x_i\ast y_j^{'}\otimes x_j^{'},
\end{eqnarray*}
\begin{eqnarray*}
r_{31}\ast r_{21}^{'}:=\sum_{i,j}y_i\ast y_j^{'}\otimes x_j^{'}\otimes x_i,\;r_{31}\ast r_{23}^{'}:=\sum_{i,j}y_i\otimes x_j^{'}\otimes x_i\ast y_j^{'},\;
r_{31}\ast r_{32}^{'}:=\sum_{i,j}y_i\otimes y_j^{'}\otimes x_i\ast x_j^{'},\\
r_{23}\ast r_{12}^{'}:=\sum_{i,j}x_j^{'}\otimes x_i\ast y_j^{'}\otimes y_i,\; r_{23}\ast r_{21}^{'}:=\sum_{i,j}y_j^{'}\otimes x_i\ast x_j^{'}\otimes y_i,\;
r_{23}\ast r_{13}^{'}:=\sum_{i,j}x_j^{'}\otimes x_i\otimes y_i\ast y_j^{'},\\
r_{23}\ast r_{31}^{'}:=\sum_{i,j}y_j^{'}\otimes x_i\otimes y_i\ast x_j^{'},\;\; r_{32}\ast r_{21}:=\sum_{i,j}y_j^{'}\otimes y_i\ast x_j^{'}\otimes x_i.
\end{eqnarray*}


\begin{lemma}\label{l3}
Let $(A,\vartriangleleft,\vartriangleright)$ be a pre-Novikov algebra and $r=\sum\limits_{i}x_i \otimes y_i \in A\otimes A$. Let $\alpha,\beta:A\rightarrow A\otimes A$ be linear maps defined by Eqs. \eqref{e11} and \eqref{e12} respectively. Then $(A, \alpha, \beta)$ is a pre-Novikov coalgebra if and only if the following conditions are satisfied for all $a\in A$:
\begin{eqnarray}
&&(L_{\circ}(a)\otimes \id \otimes \id)R_{11}+(\id \otimes L_{\vartriangleright}(a)\otimes \id)R_{12}+(\id\otimes \id \otimes L_{\odot}(a))R_{13}\label{r1}\\
&&\quad -\sum_{j}\big((\id\otimes L_{\vartriangleright}(a\odot x_{j})\otimes \id)(y_j\otimes(\tau r-r))\big)\nonumber\\
&&\quad-\sum_{i}\big((\id \otimes \id\otimes L_{\odot}(a\odot x_{i}))(y_i\otimes(\tau r-r))\big)=0,\nonumber\\
&&(L_{\vartriangleright}(a)\otimes \id\otimes \id-\id\otimes L_{\vartriangleright}(a)\otimes \id)R_{21}+(\id\otimes \id\otimes L_{\star}(a))R_{22}\label{r2}\\
&&\quad+\sum_{j}\big(((2L_{\vartriangleright}+R_{\vartriangleleft})(a\vartriangleright x_j)\otimes \id\otimes \id)((\tau r-r)\otimes y_j)\nonumber\\
&&\quad+(\id \otimes(2L_{\vartriangleright}+R_{\vartriangleleft})(a\vartriangleright x_j)\otimes \id)((\tau r-r)\otimes y_j)\big)=0,\nonumber\\
&&-(\id\otimes L_{\odot}(a)\otimes \id)R_{21}+(\id \otimes \id\otimes L_{\star}(a))R_{31}\label{r3}\\
&&\quad+\sum_{j}\big((L_{\vartriangleright}(a\vartriangleright x_j)\otimes \id\otimes \id)((\tau r-r)\otimes y_j)+(\id\otimes L_{\odot}(a\vartriangleright x_j)\otimes \id)((\tau r-r)\otimes y_j)\big)=0,\nonumber\\
&&-(\id\otimes L_{\odot}(a)\otimes \id)R_{12}+(\id\otimes \id\otimes L_{\odot}(a))R_{41}=0,\label{r4}
\end{eqnarray}
where
\begin{flalign*}
R_{11}&=r_{21}\circ r_{31}-r_{21}\circ r_{32}-r_{31}\odot r_{32}+r_{21} \vartriangleright  r_{23}+r_{31}\star r_{23},\\
R_{12}&=-r_{21}\circ r_{31}-r_{23}\odot r_{31}+r_{21}\vartriangleleft r_{32},\\
R_{13}&=r_{31}\circ r_{21}+r_{32}\odot r_{21}+r_{23}\vartriangleright r_{31}-r_{31}\vartriangleleft r_{23}+r_{32}\vartriangleright r_{21}+r_{31}\star r_{21},\\
R_{21}&=r_{21}\vartriangleright r_{13}+r_{12}\vartriangleright r_{23}+r_{13}\star r_{23},\\
R_{22}&=r_{13}\circ r_{21}+r_{23}\odot r_{21}-r_{13}\vartriangleright r_{12}-r_{23}\star r_{12}-r_{23}\circ r_{12}\\
&\quad -r_{13}\odot r_{12}+r_{23}\vartriangleright r_{21}+r_{13}\star r_{21}+r_{23}\circ r_{13}-r_{13}\circ r_{23},\\
R_{31}&=-r_{13} \circ r_{23}+r_{13} \circ  r_{21}+r_{23} \odot  r_{21}-r_{13} \vartriangleright  r_{12}-r_{23} \star  r_{12},\\
R_{41}&=-r_{31}\circ r_{21}-r_{32}\odot r_{21}+r_{31}\vartriangleleft r_{23}.
\end{flalign*}
\end{lemma}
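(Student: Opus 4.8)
The plan is to substitute the explicit formulas \eqref{e11} and \eqref{e12} for $\alpha$ and $\beta$ directly into the four defining identities \eqref{cob1}--\eqref{cob4} of a pre-Novikov coalgebra and to show, after expansion, that \eqref{cob1} is equivalent to \eqref{r1}, \eqref{cob2} to \eqref{r2}, \eqref{cob3} to \eqref{r3}, and \eqref{cob4} to \eqref{r4}. A useful first simplification is to note that $L_{\vartriangleright}+R_{\vartriangleleft}=L_{\odot}$ and $L_{\circ}+R_{\circ}=L_{\star}$, so that \eqref{e11} and \eqref{e12} read $\alpha(a)=(L_{\circ}(a)\otimes\id+\id\otimes L_{\odot}(a))\tau r$ and $\beta(a)=-(L_{\vartriangleright}(a)\otimes\id+\id\otimes L_{\star}(a))r$; this already explains why exactly the operators $L_{\circ}(a)$, $L_{\vartriangleright}(a)$, $L_{\odot}(a)$, $L_{\star}(a)$ appear on the outside of \eqref{r1}--\eqref{r4}. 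Since $\alpha$ is built from $\tau r$ and $\beta$ from $r$, each composite occurring in \eqref{cob1}--\eqref{cob4} (such as $(\alpha\otimes\id)\alpha$ or $(\id\otimes\beta)\beta$) expands into a sum of degree-two expressions in $r$, i.e.\ terms $r_{pq}\ast r_{st}$ with $\ast\in\{\circ,\vartriangleright,\vartriangleleft,\odot,\star\}$ in the various tensor slots; these are precisely the combinations packaged into the shorthands $R_{11},\dots,R_{41}$, so the first task is the bookkeeping that assembles the raw expansions into these blocks.

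Concretely I would compute $\alpha(a)=\sum_i (a\circ y_i)\otimes x_i+\sum_i y_i\otimes(a\vartriangleright x_i+x_i\vartriangleleft a)$ and the analogous expansion of $\beta(a)$, then apply each outer operator ($\alpha\otimes\id$, $\id\otimes\alpha$, $\tau\otimes\id$, $\beta\otimes\id$, etc.) slot by slot. The essential difficulty is that in a term such as $\alpha(a\circ y_i)\otimes x_i$ the element $a$ sits \emph{inside} a product with an $r$-component, whereas in \eqref{r1}--\eqref{r4} the $a$-dependence has been pulled out to the front as a single operator acting on a pure $r\otimes r$ block. Moving $a$ to the front is exactly where the pre-Novikov relations of Definition \ref{pNov} enter: just as in the proof of Lemma \ref{l1}, one repeatedly rewrites products like $(a\circ y_i)\circ y_j$, $(a\vartriangleright y_i)\vartriangleleft y_j$ and their relatives using the four identities to factor a single left-multiplication operator in $a$ out front. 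The terms that do not factor cleanly assemble into the residual contributions of \eqref{r1}--\eqref{r3}, in which one copy of $r$ enters only through the antisymmetrized combination $\tau r-r$.

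I would carry this out axiom by axiom, beginning with \eqref{cob4}, which involves $\alpha$ alone and is therefore shortest: it reduces to \eqref{r4} and already exhibits the whole mechanism, with no $\tau r-r$ correction appearing. The same procedure applied to \eqref{cob1}, \eqref{cob2} and \eqref{cob3} produces \eqref{r1}, \eqref{r2} and \eqref{r3}; these three do generate the $\tau r-r$ corrections, because the pre-Novikov identities factor the $a$-operator out only modulo the symmetrization visible as $\tau r-r$. Because each manipulation is an equality (substitution followed by reversible rewriting through the pre-Novikov identities), the resulting conditions are equivalent to the coalgebra axioms, which gives both directions of the stated ``if and only if''.

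The main obstacle is combinatorial rather than conceptual: tracking correctly the many tensor-slot placements and flips $\tau$, and selecting at each stage the precise pre-Novikov identity that collapses a two-factor operator product into a single $a$-operator. The computation stays manageable only if one first groups like $r_{pq}\ast r_{st}$ terms into the blocks $R_{pq}$ and \emph{then} simplifies; once the blocks are isolated, verifying each of \eqref{r1}--\eqref{r4} is a finite, if lengthy, check of exactly the kind already illustrated for the correspondence between \eqref{lfd8} and \eqref{e9} in Lemma \ref{l1}.
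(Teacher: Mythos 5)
Your proposal is correct and follows essentially the same route as the paper: the paper's proof likewise substitutes Eqs.~\eqref{e11}--\eqref{e12} into Eqs.~\eqref{cob1}--\eqref{cob4}, expands each axiom into degree-two expressions in $r$ grouped into the blocks $R_{ij}$ (the paper splits the expansion of \eqref{cob1} into $P_1(a)+P_2(a)+P_3(a)$), and uses the pre-Novikov identities of Definition~\ref{pNov} to pull the $a$-operators to the front, with the non-factoring remainders producing exactly the $\tau r - r$ correction terms, and then treats the remaining three axioms ``similarly.'' Your observation that $L_{\vartriangleright}+R_{\vartriangleleft}=L_{\odot}$ and $L_{\circ}+R_{\circ}=L_{\star}$, and that reversibility of the rewriting yields both directions of the equivalence, matches the paper's (implicit) reasoning, so the only difference is the order in which the four axioms are checked.
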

\begin{proof}
Let $a\in A$. By Eqs. \eqref{e11} and \eqref{e12}, we have
\begin{flalign*}
A(a)&=(\alpha\otimes \id)\alpha(a)+(\tau\otimes \id)(\id\otimes \alpha)\beta(a)-(\id\otimes(\alpha+\beta))\alpha(a)-(\tau\otimes \id)(\beta\otimes \id)\alpha(a)\\
&=\sum\limits_{i,j}\big((a\circ y_i)\circ y_j\otimes x_j\otimes x_i+y_j\otimes(a\circ y_i)\odot x_j\otimes x_i+y_i\circ y_j\otimes x_j \otimes a\odot x_i\\
&\quad+y_j\otimes y_i\odot x_j\otimes a\odot x_i-y_i\circ y_j\otimes a\vartriangleright x_i\otimes x_j-y_j\otimes a\vartriangleright x_i\otimes y_i\odot x_j\\
&\quad-(a\star y_i)\circ y_j\otimes x_i\otimes x_j-y_j\otimes x_i\otimes(a\star y_i)\odot x_j-a\circ y_i\otimes x_i\circ y_j\otimes x_j\\
&\quad-a\circ y_i\otimes y_j\otimes x_i\odot x_j+a\circ y_i\otimes x_i\vartriangleright x_j\otimes y_j+a\circ y_i\otimes x_j\otimes x_i\star y_j\\
&\quad-y_i\otimes(a\odot x_i)\circ y_j\otimes x_j-y_i\otimes y_j\otimes(a\odot x_i)\odot x_j+y_i\otimes(a\odot x_i)\vartriangleright x_j\otimes y_j\\
&\quad+y_i\otimes x_j\otimes (a\odot x_i)\star y_j+y_j\otimes(a\circ y_i)\vartriangleright x_j\otimes x_i+(a\circ y_i)\star y_j\otimes x_j\otimes x_i\\
&\quad+y_j\otimes y_i\vartriangleright x_j\otimes a\odot x_i+y_i\star y_j\otimes x_j\otimes a\odot x_i\big).
\end{flalign*}
Then we obtain $A(a)=P_1(a)+P_2(a)+P_3(a)$, where\\
\begin{flalign*}
P_1(a)&=\sum\limits_{i,j}\big((a\circ y_i)\circ y_j\otimes x_j\otimes x_i
-(a\star y_i)\circ y_j\otimes x_i\otimes x_j\\
&\quad-a\circ y_i\otimes x_i\circ y_j\otimes x_j
-a\circ y_i\otimes y_j\otimes x_i\odot x_j\\
&\quad+a\circ y_i\otimes x_i\vartriangleright x_j\otimes y_j
+a\circ y_i\otimes x_j\otimes  x_i\star y_j
+(a\circ y_i)\star y_j\otimes x_j\otimes x_i\big),\\
P_2(a)&=\sum\limits_{i,j}\big(y_j\otimes(a\circ y_i)\odot x_j\otimes x_i-y_i\circ y_j\otimes a\vartriangleright x_i\otimes x_j-y_j\otimes a\vartriangleright x_i\otimes y_i\odot x_j\\
&\quad-y_i\otimes(a\odot x_i)\circ y_j\otimes x_j+y_i\otimes (a\odot x_i)\vartriangleright x_j\otimes y_j+y_j\otimes(a\circ y_i)\vartriangleright x_j\otimes x_i\big),\\
P_3(a)&=\sum\limits_{i,j}\big(y_i\circ y_j\otimes x_j \otimes a\odot x_i+y_j\otimes y_i\odot x_j\otimes a\odot x_i\\
&\quad-y_j\otimes x_i\otimes(a\star y_i)\odot x_j-y_i\otimes y_j\otimes(a\odot x_i)\odot x_j\\
&\quad+y_i\otimes x_j\otimes (a\odot x_i)\star y_j+y_j\otimes y_i\vartriangleright x_j\otimes a\odot x_i+y_i\star y_j\otimes x_j\otimes a\odot x_i\big).
\end{flalign*}
For $P_1(a)$, since
\begin{flalign*}
&\sum\limits_{i,j}\big((a\circ y_i)\circ y_j\otimes x_j\otimes x_i-(a\star y_i)\circ y_j\otimes x_i\otimes x_j+(a\circ y_i)\star y_j\otimes x_j\otimes x_i\big)\\
&\;\;=\sum\limits_{i,j}a\circ(y_j\circ y_i)\otimes x_j\otimes x_i,
\end{flalign*}
we have
\begin{flalign*}
P_1(a)&=\sum\limits_{i,j}\big((L_{\circ}\otimes \id\otimes \id)(y_j\circ y_i\otimes x_j\otimes x_i- y_i\otimes x_i\circ y_j\otimes x_j-y_i\otimes y_j\otimes x_i\odot x_j\\
&\quad +y_i\otimes x_i\vartriangleright x_j\otimes y_j+y_i\otimes x_j\otimes x_i\star y_j)\big)\\
&=(L_{\circ}\otimes \id\otimes \id)(r_{21}\circ r_{31}-r_{21}\circ r_{32}-r_{31}\odot r_{32}+r_{21} \vartriangleright  r_{23}+r_{31}\star r_{23}).
\end{flalign*}
For $P_2(a)$, since
\begin{eqnarray*}
&&\sum\limits_{i,j}\big(y_j\otimes(a\circ y_i)\odot x_j\otimes x_i-y_i\otimes(a\odot x_i)\circ y_j\otimes x_j
+y_i\otimes (a\odot x_i)\vartriangleright x_j\otimes y_j\\
&&\;\;\quad+y_j\otimes(a\circ y_i)\vartriangleright x_j\otimes x_i\big)\\
&&\;\;=\sum\limits_{i,j}y_j \otimes a\vartriangleright(x_j\vartriangleleft y_i)\otimes x_i-\sum\limits_{j}(\id\otimes L_{\vartriangleright}(a\odot x_{j})\otimes \id)(y_j\otimes(\tau r-r)),
\end{eqnarray*}
we have
\begin{flalign*}
P_2(a)&=\sum\limits_{i,j}\big((\id\otimes L_{\vartriangleright}(a)\otimes \id)(-y_i\circ y_j\otimes x_i \otimes x_j-y_j\otimes x_i\otimes y_i\odot x_j+y_j\odot x_j\vartriangleleft y_i\otimes x_i)\big)\\
&\quad-\sum\limits_{j}(\id\otimes L_{\vartriangleright}(a\odot x_{j})\otimes \id)(y_j\otimes(\tau r-r))\\
&=(\id\otimes L_{\vartriangleright}(a)\otimes \id)(-r_{21}\circ r_{31}-r_{23}\odot r_{31}+r_{21}\vartriangleleft r_{32})\\
&\quad-\sum\limits_{j}(\id\otimes L_{\vartriangleright}(a\odot x_{j})\otimes \id)(y_j\otimes(\tau r-r)).
\end{flalign*}
For $P_3(a)$, since
\begin{eqnarray*}
&&\sum\limits_{i,j}\big(-y_j\otimes x_i\otimes(a\star y_i)\odot x_j-y_i\otimes y_j\otimes(a\odot x_i)\odot x_j+y_i\otimes x_j\otimes (a\odot x_i)\star y_j\big)\\
&&\;\;=\sum\limits_{i,j}y_i\otimes x_j\otimes a\odot (y_j\vartriangleright x_i-x_i\vartriangleleft y_j)-\sum\limits_{i}(\id\otimes \id\otimes L_{\odot}(a\odot x_{i}))(y_i\otimes(\tau r-r)),
\end{eqnarray*}
we have
\begin{flalign*}
P_3(a)&=\sum\limits_{i,j}\big((\id\otimes \id\otimes L_{\odot}(a))(y_i\circ y_j\otimes x_j \otimes  x_i+y_j\otimes y_i\odot x_j\otimes  x_i\\
&\quad+y_i\otimes x_j\otimes y_j\vartriangleright x_i-y_i\otimes x_j\otimes x_i\vartriangleleft y_j+y_j\otimes y_i\vartriangleright x_j\otimes  x_i\\
&\quad+y_i\star y_j\otimes x_j\otimes  x_i)\big)-\sum\limits_{i}(\id\otimes \id\otimes L_{\odot}(a\odot x_{i}))((y_i\otimes(\tau r-r)),\\
&=(\id\otimes \id\otimes L_{\odot}(a))(r_{31}\circ r_{21}+r_{32}\odot r_{21}+r_{23}\vartriangleright r_{31}-r_{31}\vartriangleleft r_{23}+r_{32}\vartriangleright r_{21}+r_{31}\star r_{21})\\
&\quad-\sum\limits_{i}(\id\otimes \id\otimes L_{\odot}(a\odot x_{i}))(y_i\otimes(\tau r-r)).
\end{flalign*}
Therefore, Eq. (\ref{cob1}) is equivalent to Eq. \eqref{r1}.

Similarly, one can prove that Eq. (\ref{cob2}) is equivalent to Eq. \eqref{r2}, Eq. (\ref{cob3}) is equivalent to Eq. \eqref{r3}, and Eq. (\ref{cob4}) is equivalent to Eq. \eqref{r4}. Then the proof is completed.
\end{proof}

Next, we introduce the definition of pre-Novikov Yang-Baxter equation.
\begin{definition}
Let $(A,\vartriangleleft,\vartriangleright)$ be a pre-Novikov algebra and $r\in A\otimes A$. The following equation
\begin{flalign}
r_{12}\circ r_{13}+r_{23}\odot r_{13}-r_{12}\vartriangleleft r_{23}=0\label{yb}
\end{flalign}
is called the \textbf{pre-Novikov Yang-Baxter equation} in $(A,\vartriangleleft,\vartriangleright)$.
\end{definition}

\delete{\begin{lemma}\label{l4}
Let $(A,\vartriangleleft,\vartriangleright)$ be a pre-Novikov algebra and $r\in A\otimes A$ be a symmetric solution of pre-Novikov Yang-Baxter equation. Then we have $R_{ij}=0$, where $R_{ij}=0$ are defined by Lemma \ref{l3}, $i=1,2,3,4;j=1,2,3.$
\end{lemma}

\begin{proof}
Clearly, we know that $R_{41}=0$ since $r$ is symmetric. Suppose that $\sigma$ is an element in the permutation group $S_3$ acting on \{1,2,3\}. Then there is a linear map induced by $\sigma$ from $A\otimes A\otimes A $ to $A\otimes A\otimes A$. We still denote this linear map by $\sigma$.

Therefore, it is straightforward to deduce that $R_{11},R_{12},R_{32},R_{42}$ are equivalent to zero. For example, set $\sigma=(23)\in S_3$, then we have
\begin{flalign*}
\sigma(R_{11})&=\sigma(r_{21}\circ r_{31}-r_{21}\circ r_{32}-r_{31}\odot r_{32}+r_{21} \vartriangleright  r_{23}+r_{31}\star r_{23})\\
&=\sigma(r_{21}\circ r_{31}+r_{23}\odot r_{31}-r_{21}\vartriangleleft r_{32})\\
&=r_{\sigma(21)}\circ r_{\sigma(31)}+r_{\sigma(23)}\odot r_{\sigma(31)}-r_{\sigma(21)}\vartriangleleft r_{\sigma(32)}\\
&=r_{31}\circ r_{21}+r_{32}\odot r_{21}-r_{31}\vartriangleleft r_{23}\\
&=-R_{42}.
\end{flalign*}

For $R_{21}$, set $\sigma=(12)\in S_3$. Since $r$ is symmetric, we have
\begin{flalign*}
R_{21}&=r_{12}\vartriangleright r_{13}+r_{12}\vartriangleright r_{23}+r_{13}\star r_{23}\\
&=(r_{21}\circ r_{31}+r_{23}\odot r_{31}-r_{21}\vartriangleleft r_{32})+(r_{12}\circ r_{23}+r_{13}\odot r_{32}-r_{12}\vartriangleleft r_{13})\\
&=R_{41}+\sigma(R_{41})=0.
\end{flalign*}

Therefore, it is straightforward to deduce that $R_{13},R_{21},R_{22},R_{23},R_{31}$ are equivalent to zero.
\end{proof}}

\begin{theorem}\label{t3}
Let $(A,\vartriangleleft,\vartriangleright)$ be a pre-Novikov algebra and $r\in A\otimes A$ be a symmetric solution of pre-Novikov Yang-Baxter equation in $(A,\vartriangleleft,\vartriangleright)$. Let $\alpha,\beta:A\rightarrow A\otimes A$ be linear maps defined by Eqs. \eqref{e11} and \eqref{e12} respectively. Then $(A,\vartriangleleft,\vartriangleright,\alpha,\beta)$ is a pre-Novikov bialgebra.
\end{theorem}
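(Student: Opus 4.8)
The plan is to establish the two defining ingredients of a pre-Novikov bialgebra separately: that the compatibility conditions Eqs.~\eqref{lfd1}--\eqref{lfd8} hold, and that $(A,\alpha,\beta)$ is a pre-Novikov coalgebra. The feature that drives the whole argument is that a symmetric $r$ satisfies $\tau r=r$, so every factor of the form $(\tau r - r)$ appearing in Lemmas~\ref{l1} and~\ref{l3} vanishes. The compatibility part is then immediate: by Lemma~\ref{l1}, Eqs.~\eqref{lfd1}--\eqref{lfd8} are equivalent to Eqs.~\eqref{ld1}--\eqref{ld4}, and each of the latter is an operator applied to $(\tau r - r)$; hence all four hold trivially, and so do Eqs.~\eqref{lfd1}--\eqref{lfd8}.

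For the coalgebra part I would invoke Lemma~\ref{l3}, which reduces the coassociativity-type identities Eqs.~\eqref{cob1}--\eqref{cob4} to Eqs.~\eqref{r1}--\eqref{r4}. The explicit $(\tau r - r)$ summands in Eqs.~\eqref{r1}--\eqref{r3} again drop out, leaving combinations of the expressions $R_{11},R_{12},R_{13},R_{21},R_{22},R_{31},R_{41}$ (and Eq.~\eqref{r4} already only involves $R_{12}$ and $R_{41}$), so it suffices to prove that each of these $R_{ij}$ vanishes. Here the symmetry of $r$ enters again: since $\tau r=r$ yields $r_{ab}=r_{ba}$ inside every threefold contraction, and since the symmetric group $S_3$ acts on $A\otimes A\otimes A$ by permuting tensor factors, each $R_{ij}$ can be rewritten as a signed combination of $S_3$-images of the single expression
\[
Y:=r_{12}\circ r_{13}+r_{23}\odot r_{13}-r_{12}\vartriangleleft r_{23},
\]
which is exactly the left-hand side of the pre-Novikov Yang-Baxter equation~\eqref{yb}. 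For instance one verifies $R_{41}=-(23)\cdot Y$ and $R_{21}=Y+(12)\cdot Y$, and the remaining expressions are treated by analogous identities. Since $r$ solves Eq.~\eqref{yb} we have $Y=0$, hence $\sigma\cdot Y=0$ for every $\sigma\in S_3$, so every $R_{ij}=0$. Therefore Eqs.~\eqref{r1}--\eqref{r4} hold, $(A,\alpha,\beta)$ is a pre-Novikov coalgebra, and combined with the first part $(A,\vartriangleleft,\vartriangleright,\alpha,\beta)$ is a pre-Novikov bialgebra.

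I expect the main obstacle to be precisely this last reduction: expressing each of the seven $R_{ij}$ as an explicit $S_3$-combination of permuted copies of $Y$. The computation is mechanical but error-prone, demanding careful bookkeeping of which tensor slot carries which leg of $r$ and of the placement of the operations $\circ,\vartriangleleft,\vartriangleright,\odot,\star$; it is the identity $r_{ab}=r_{ba}$ for symmetric $r$ that collapses all the mixed contractions onto permutations of the one equation~\eqref{yb}.
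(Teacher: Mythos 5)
Your proposal is correct and follows essentially the same route as the paper: symmetry of $r$ kills every $(\tau r-r)$ factor so that Lemma~\ref{l1} gives Eqs.~\eqref{lfd1}--\eqref{lfd8} for free, and the remaining task of showing each $R_{ij}=0$ is handled exactly as in the paper, by identifying $R_{11}=-R_{12}$ with the Yang--Baxter expression $Y$ and producing the others as signed $S_3$-images (e.g.\ $R_{41}=-(23)\cdot Y$ and $R_{21}=Y+(12)\cdot Y$, which are precisely the paper's computations). The only caveat, which applies equally to the paper's own "similarly" for $R_{13}$, $R_{22}$, $R_{31}$, is that these remaining reductions are asserted rather than written out.
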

\begin{proof}
Clearly, we have $R_{11}=-R_{12}=0$ since $r$ is symmetric. Suppose that $\sigma$ is an element in the permutation group $S_3$ acting on \{1,2,3\}. Then there is a linear map induced by $\sigma$ from $A\otimes A\otimes A $ to $A\otimes A\otimes A$. We still denote this linear map by $\sigma$.

Set $\sigma=(23)\in S_3$. Then we have
\begin{flalign*}
0&=\sigma(R_{12})\\
&=\sigma(-r_{21}\circ r_{31}-r_{23}\odot r_{31}+r_{21}\vartriangleleft r_{32})\\
&=-r_{\sigma(21)}\circ r_{\sigma(31)}-r_{\sigma(23)}\odot r_{\sigma(31)}+r_{\sigma(21)}\vartriangleleft r_{\sigma(32)}\\
&=-r_{31}\circ r_{21}-r_{32}\odot r_{21}+r_{31}\vartriangleleft r_{23}\\
&=R_{41}.
\end{flalign*}
For $R_{21}$, set $\sigma=(12)\in S_3$. Since $r$ is symmetric, we have
\begin{flalign*}
R_{21}&=r_{12}\vartriangleright r_{13}+r_{12}\vartriangleright r_{23}+r_{13}\star r_{23}\\
&=(r_{12}\circ r_{13}+r_{23}\odot r_{13}-r_{12}\vartriangleleft r_{23})+(r_{12}\circ r_{23}+r_{13}\odot r_{23}-r_{12}\vartriangleleft r_{13})\\
&=(r_{21}\circ r_{31}+r_{23}\odot r_{31}-r_{21}\vartriangleleft r_{32})+(r_{12}\circ r_{32}+r_{13}\odot r_{32}-r_{12}\vartriangleleft r_{31})\\
&=-R_{12}-\sigma(R_{12})=0.
\end{flalign*}

Similarly, one can verify that $R_{13}$, $R_{22}$ and $R_{31}$ are equal to zero.

Then this theorem follows directly by Lemmas \ref{l1} and \ref{l3}.
\end{proof}

Next, we investigate the operator forms of pre-Novikov Yang-Baxter equation. First, we introduce the definition of representations of pre-Novikov algebras.
\begin{definition}
Let $(A,\vartriangleleft,\vartriangleright)$ be a pre-Novikov algebra and $V$ be a vector space. Let  $l_{\vartriangleright},r_{\vartriangleright},l_{\vartriangleleft},r_{\vartriangleleft}:A\rightarrow \text{End}_{\bf k}(V)$ be linear maps. $(V,l_{\vartriangleright},r_{\vartriangleright},l_{\vartriangleleft},r_{\vartriangleleft})$ is called a \textbf{representation} of $(A,\vartriangleleft,\vartriangleright)$ if satisfies
\begin{flalign}
&l_{\vartriangleright}(a)l_{\vartriangleright}(b)v-l_{\vartriangleright}(b)l_{\vartriangleright}(a)v=l_{\vartriangleright}(a\circ b-b \circ a)v,  \label{rp1}\\
&l_{\vartriangleright}(a)l_{\vartriangleleft}(b)v-l_{\vartriangleleft}(b)l_{\vartriangleright}(a)v=l_{\vartriangleleft}(a\vartriangleright b-b\vartriangleleft a)v+l_{\vartriangleleft}(b)l_{\vartriangleleft}(a)v, \label{rp2}\\
&r_{\vartriangleright}(a\vartriangleright b)v=r_{\vartriangleright}(b)(r_{\vartriangleright}+r_{\vartriangleleft})(a)v+
l_{\vartriangleright}(a)r_{\vartriangleright}(b)v-r_{\vartriangleright}(b)(l_{\vartriangleleft}+l_{\vartriangleright})(a)v,\label{rp3}  \\
&r_{\vartriangleright}(a\vartriangleleft b)v=r_{\vartriangleleft}(b)r_{\vartriangleright}(a)v+l_{\vartriangleleft}(a)(r_{\vartriangleright}
+r_{\vartriangleleft})(b)v-r_{\vartriangleleft}(b)l_{\vartriangleleft}(a)v,\label{rp4}\\
&l_{\vartriangleright}(a)r_{\vartriangleleft}(b)v-r_{\vartriangleleft}(b)l_{\vartriangleright}(a)v=r_{\vartriangleleft}(a\circ b)v-r_{\vartriangleleft}(b)r_{\vartriangleleft}(a)v,\label{rp5}\\
&r_{\vartriangleright}(a)(r_{\vartriangleright}+r_{\vartriangleleft})(b)v=r_{\vartriangleleft}(b)r_{\vartriangleright}(a)v,\label{rp6}\\
&l_{\vartriangleleft}(a\vartriangleright b)v=r_{\vartriangleright}(b)(l_{\vartriangleright}+l_{\vartriangleleft})(a)v,\label{rp7}\\
&l_{\vartriangleright}(a\circ b)v=r_{\vartriangleleft}(b)l_{\vartriangleright}(a)v,\label{rp8}\\
&r_{\vartriangleleft}(a)r_{\vartriangleleft}(b)v=r_{\vartriangleleft}(b)r_{\vartriangleleft}(a)v,\label{rp9}\\
&l_{\vartriangleleft}(a\vartriangleleft b)v=r_{\vartriangleleft}(b)l_{\vartriangleleft}(a)v,\label{rp10}\qquad a,b\in A,v\in V,
\end{flalign}
where $a\circ b=a\vartriangleright b+a \vartriangleleft b$.
\end{definition}

Note that $(A,L_{\vartriangleright},R_{\vartriangleright},L_{\vartriangleleft},R_{\vartriangleleft})$ is a representation of $(A,\vartriangleleft,\vartriangleright)$.

\begin{proposition}
Let $(A,\vartriangleleft,\vartriangleright)$  be a pre-Novikov algebra. Let $V$ be a vector space and  $l_{\vartriangleright},r_{\vartriangleright},l_{\vartriangleleft},r_{\vartriangleleft}:A\rightarrow \text{End}_{\bf k}(V)$ be linear maps. Define two binary operation $\vartriangleleft$ and $ \vartriangleright$ on the direct sum $A\oplus V$ of the underlying vector spaces of $A$ and $V$ by
 \begin{eqnarray*}
&&(a+u)\vartriangleleft(b+v):=a\vartriangleleft b+l_{\vartriangleleft}(a)v+r_{\vartriangleleft}(b)u,\\ &&(a+u)\vartriangleright(b+v):=a \vartriangleright b+l_{\vartriangleright}(a)v+r_{\vartriangleright}(b)u,\;\;a, b\in A, u, v\in V.
\end{eqnarray*}
 Then $(V,l_{\vartriangleright},r_{\vartriangleright},l_{\vartriangleleft},r_{\vartriangleleft})$ is a representation of  $(A,\vartriangleleft,\vartriangleright)$ if and only if $(A\oplus V, \vartriangleleft,\vartriangleright)$ is a pre-Novikov algebra, which is called the {\bf semi-direct product} of $A$ and $V$ and denoted by $A\ltimes _{l_{\vartriangleright},r_{\vartriangleright},l_{\vartriangleleft},r_{\vartriangleleft}}V$.
\end{proposition}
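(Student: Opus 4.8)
The plan is to verify the four defining identities of a pre-Novikov algebra (Definition~\ref{pNov}) directly for the operations $\vartriangleleft,\vartriangleright$ on $A\oplus V$ and to observe that their only nontrivial instances are precisely the representation axioms \eqref{rp1}--\eqref{rp10}.

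First I would record the structural observation that renders the computation finite. Since $l_\vartriangleright,r_\vartriangleright,l_\vartriangleleft,r_\vartriangleleft$ are linear, they send $0$ to $0$, so any product of two elements of $V$ vanishes; that is, $V\vartriangleright V=V\vartriangleleft V=0$. Moreover, by the defining formulas the $A$-component of a product depends only on the $A$-components of its factors. Because each identity in Definition~\ref{pNov} is trilinear in $(a,b,c)$, it then suffices to test it on homogeneous triples in which each slot is occupied by an element of either $A$ or $V$.

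Next I would dispose of the trivial cases. When all three slots lie in $A$, both sides reduce to the corresponding identity in $(A,\vartriangleleft,\vartriangleright)$, which holds by hypothesis, and the $V$-component is $0$. When two or three slots lie in $V$, every term, upon carrying out its inner product, multiplies two elements of $V$ at some stage and hence vanishes, so both sides are $0$. Thus all the content is concentrated in the case of exactly one slot in $V$ and the other two in $A$, where the $A$-component automatically vanishes and only the $V$-component survives. Expanding each identity with the single $V$-entry placed successively in slots $1$, $2$, $3$, and using that the $V$-entry is acted on by a left action $l_\bullet$ when it is a left factor and by a right action $r_\bullet$ when it is a right factor, one reads off operator identities on $V$. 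The bookkeeping yields the following correspondence: the first identity produces \eqref{rp1} (from slot $3$) and \eqref{rp3} (from slots $1,2$); the second produces \eqref{rp2}, \eqref{rp4}, \eqref{rp5}; the third produces \eqref{rp6}, \eqref{rp7}, \eqref{rp8}; and the fourth produces \eqref{rp9} and \eqref{rp10}. Every one of the ten representation axioms occurs, some from two coinciding slot placements, and conversely these ten are exactly the conditions that arise. Hence $(A\oplus V,\vartriangleleft,\vartriangleright)$ is a pre-Novikov algebra if and only if $(V,l_\vartriangleright,r_\vartriangleright,l_\vartriangleleft,r_\vartriangleleft)$ is a representation.

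I expect the main obstacle to be organizational rather than conceptual: keeping track of the twelve slot placements, correctly determining in each term whether the $V$-entry is hit by an $l_\bullet$ or an $r_\bullet$, and confirming that the redundant placements genuinely reproduce the same axiom instead of imposing a new constraint. An alternative, perhaps shorter, route would be to invoke Proposition~\ref{an}: one checks that $\circ=\vartriangleleft+\vartriangleright$ equips $A\oplus V$ with the semidirect-product Novikov structure associated to the Novikov representation $(V,l_\vartriangleright+l_\vartriangleleft,r_\vartriangleright+r_\vartriangleleft)$ and that $(A\oplus V,L_\vartriangleright,R_\vartriangleleft)$ is a representation of it. This merely repackages the same ten verifications, however, so I would present the direct computation.
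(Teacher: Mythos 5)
Your proof is correct and takes the same route the paper intends: the paper's entire proof is the single line ``It is straightforward,'' and your argument---products of two elements of $V$ vanish, the all-$A$ case holds by hypothesis, and each placement of a single $V$-entry in the four identities of Definition~\ref{pNov} yields one of the representation axioms---is exactly that omitted computation carried out in full. Your slot-to-axiom bookkeeping also checks out (identity one gives \eqref{rp1} from slot $3$ and \eqref{rp3} from slots $1$ and $2$; identity two gives \eqref{rp2}, \eqref{rp4}, \eqref{rp5}; identity three gives \eqref{rp8}, \eqref{rp6}, \eqref{rp7} from slots $3$, $1$, $2$ respectively; identity four gives \eqref{rp9} and \eqref{rp10}, the latter twice), so all ten axioms \eqref{rp1}--\eqref{rp10} arise and no further conditions appear, establishing the equivalence.
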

\begin{proof}
It is straightforward.
\end{proof}
\begin{proposition}
Let $(A,\vartriangleleft,\vartriangleright)$ be a pre-Novikov algebra. If $(V,l_{\vartriangleright},r_{\vartriangleright},l_{\vartriangleleft},r_{\vartriangleleft})$ is a representation of $(A,\vartriangleleft,\vartriangleright)$, then $(V^*,l_{\vartriangleright}^*+l_{\vartriangleleft}^*+r_{\vartriangleright}^*+r_{\vartriangleleft}^*,r_{\vartriangleright}^*,
-(r_{\vartriangleright}^*+l_{\vartriangleleft}^*),-(r_{\vartriangleright}^*+r_{\vartriangleleft}^*))$ is also a representation of $(A,\vartriangleleft,\vartriangleright)$.
\end{proposition}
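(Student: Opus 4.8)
The plan is to verify the ten representation identities \eqref{rp1}--\eqref{rp10} for the quadruple $(V^*,\hat l_{\vartriangleright},\hat r_{\vartriangleright},\hat l_{\vartriangleleft},\hat r_{\vartriangleleft})$, where $\hat l_{\vartriangleright}=l_{\vartriangleright}^*+l_{\vartriangleleft}^*+r_{\vartriangleright}^*+r_{\vartriangleleft}^*$, $\hat r_{\vartriangleright}=r_{\vartriangleright}^*$, $\hat l_{\vartriangleleft}=-(r_{\vartriangleright}^*+l_{\vartriangleleft}^*)$ and $\hat r_{\vartriangleleft}=-(r_{\vartriangleright}^*+r_{\vartriangleleft}^*)$. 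The whole computation rests on two transcription rules that follow from \eqref{e14}: pairing a single dual operator with $v\in V$ flips a sign, $\langle\rho^*(a)f,v\rangle=-\langle f,\rho(a)v\rangle$, while pairing a product of two dual operators reverses their order and keeps the sign, $\langle\rho^*(a)\sigma^*(b)f,v\rangle=\langle f,\sigma(b)\rho(a)v\rangle$. Thus each identity, tested against an arbitrary $v\in V$, becomes an identity among the original operators $l_{\vartriangleright},r_{\vartriangleright},l_{\vartriangleleft},r_{\vartriangleleft}$ applied to $v$, which I then match against \eqref{rp1}--\eqref{rp10}.

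First I would dispose of the four identities \eqref{rp1}, \eqref{rp5}, \eqref{rp8} and \eqref{rp9} structurally. These four are precisely the assertion that $(V^*,\hat l_{\vartriangleright},\hat r_{\vartriangleleft})$ is a representation of the associated Novikov algebra $(A,\circ)$. Writing $L:=l_{\vartriangleright}+l_{\vartriangleleft}$ and $R:=r_{\vartriangleright}+r_{\vartriangleleft}$, one has $\hat l_{\vartriangleright}=(L+R)^*=L^*+R^*$ and $\hat r_{\vartriangleleft}=-R^*$. Combining the semi-direct product proposition above with Proposition \ref{an}, the associated Novikov representation $(V,L,R)$ is a representation of $(A,\circ)$; hence the dual-representation construction for Novikov algebras recalled above, which sends a representation $(V,l,r)$ to $(V^*,l^*+r^*,-r^*)$, applies verbatim with $(l,r)=(L,R)$ and yields that $(V^*,L^*+R^*,-R^*)=(V^*,\hat l_{\vartriangleright},\hat r_{\vartriangleleft})$ is a Novikov representation, i.e. exactly \eqref{rp1}, \eqref{rp5}, \eqref{rp8} and \eqref{rp9}.

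The remaining six identities \eqref{rp2}, \eqref{rp3}, \eqref{rp4}, \eqref{rp6}, \eqref{rp7} and \eqref{rp10} I would check by the direct transcription above; each collapses to an identity implied by the original axioms. For instance, \eqref{rp6} for the dual transcribes to $r_{\vartriangleleft}(b)r_{\vartriangleright}(a)=r_{\vartriangleright}(a)(r_{\vartriangleright}+r_{\vartriangleleft})(b)$, which is precisely \eqref{rp6}; and \eqref{rp10} for the dual transcribes to $r_{\vartriangleright}(a\vartriangleleft b)+l_{\vartriangleleft}(a\vartriangleleft b)=(r_{\vartriangleright}(a)+l_{\vartriangleleft}(a))(r_{\vartriangleright}+r_{\vartriangleleft})(b)$, which follows by adding \eqref{rp4} to \eqref{rp10} and then substituting \eqref{rp6}. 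The identities \eqref{rp2}, \eqref{rp3} and \eqref{rp7} are handled by the same mechanism, each reducing to a short combination of the original ten relations.

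The main obstacle is the bookkeeping in \eqref{rp2}, \eqref{rp3} and \eqref{rp4}: since $\hat l_{\vartriangleright}$ is a sum of four dual operators and $\hat l_{\vartriangleleft},\hat r_{\vartriangleleft}$ are each sums of two, the operator products occurring in these identities expand into many terms after transcription, and one must regroup them correctly, tracking both the order reversal and the signs, before the cancellations dictated by \eqref{rp1}--\eqref{rp10} become visible. No idea beyond the two transcription rules is required; the difficulty lies entirely in organizing these expansions so that the reduction to the original axioms is transparent.
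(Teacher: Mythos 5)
Your proposal is correct, and it departs from the paper's proof in a substantive way. The paper proves the proposition by transcribing the dual version of \eqref{rp1} through the pairing \eqref{e14} and expanding by brute force, isolating along the way the auxiliary operator identities \eqref{dual-resp-1}--\eqref{dual-resp-5}, and then dismisses the dual versions of \eqref{rp2}--\eqref{rp10} with ``similarly''. You instead notice that the dual versions of \eqref{rp1}, \eqref{rp5}, \eqref{rp8}, \eqref{rp9} assert precisely that $(V^*,L^*+R^*,-R^*)$, with $L=l_{\vartriangleright}+l_{\vartriangleleft}$ and $R=r_{\vartriangleright}+r_{\vartriangleleft}$, is a representation of the associated Novikov algebra $(A,\circ)$, so all four follow at once from the Novikov dual-representation result recalled in the paper (\cite[Proposition 3.3]{HBG}) once one knows that $(V,L,R)$ is a representation of $(A,\circ)$; this removes exactly the computation the paper labors over, since the dual \eqref{rp1} is the worst case ($\hat l_{\vartriangleright}$ being a four-term sum). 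Your justification that $(V,L,R)$ is a Novikov representation---pre-Novikov semi-direct product plus Proposition \ref{an}---is sound but has one unstated link: you need that a Novikov algebra structure on $A\oplus V$ of semi-direct-product form is equivalent to $(V,L,R)$ being a representation; this is standard and is available inside the paper as the converse part of Proposition \ref{d1} (take the second algebra to be $V$ with zero product acting trivially on $A$), and a full write-up should cite it. For the remaining six identities your mechanism coincides with what the paper leaves implicit, and your two worked examples are right: the dual \eqref{rp6} collapses to \eqref{rp6} itself, and your derivation of the dual \eqref{rp10} from \eqref{rp4}, \eqref{rp10} and \eqref{rp6} reproduces verbatim the paper's identity \eqref{dual-resp-4}. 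The unchecked cases do reduce the same way (for instance the dual \eqref{rp3} collapses to \eqref{rp3} plus \eqref{rp6}), so what remains is bookkeeping of the kind neither you nor the paper writes out; in exchange, your route is shortest exactly where the paper's is longest.
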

\begin{proof}
For all $a$, $b\in A$, $f\in V^*$ and $v\in V$, we have
\begin{eqnarray*}
&&\langle( l_{\vartriangleright}^*+l_{\vartriangleleft}^*+r_{\vartriangleright}^*+r_{\vartriangleleft}^* )(a)( l_{\vartriangleright}^*+l_{\vartriangleleft}^*+r_{\vartriangleright}^*+r_{\vartriangleleft}^*)(b)f-  ( l_{\vartriangleright}^*+l_{\vartriangleleft}^*+r_{\vartriangleright}^*+r_{\vartriangleleft}^*)(b) ( l_{\vartriangleright}^*+l_{\vartriangleleft}^*+r_{\vartriangleright}^*+r_{\vartriangleleft}^* )(a)f\\
&&\quad \;\;-( l_{\vartriangleright}^*+l_{\vartriangleleft}^*+r_{\vartriangleright}^*+r_{\vartriangleleft}^*  )(a\circ b-b \circ a)f,v\rangle\\
&&\;\;=\langle f,(l_{\vartriangleright}+l_{\vartriangleleft}+r_{\vartriangleright}+r_{\vartriangleleft})(b)((l_{\vartriangleright}
+l_{\vartriangleleft}+r_{\vartriangleright}+r_{\vartriangleleft})(a)v)\\
&&\quad\;\;-(l_{\vartriangleright}+l_{\vartriangleleft}+r_{\vartriangleright}+r_{\vartriangleleft})(a)((l_{\vartriangleright}
+l_{\vartriangleleft}+r_{\vartriangleright}+r_{\vartriangleleft})(b)v)
+(l_{\vartriangleright}+l_{\vartriangleleft}+r_{\vartriangleright}+r_{\vartriangleleft})(a\circ b-b \circ a)v \rangle.
\end{eqnarray*}

By Eqs. \eqref{rp1} and \eqref{rp8}, we obtain
\begin{eqnarray*}
&&l_{\vartriangleright}(a)l_{\vartriangleright}(b)v-l_{\vartriangleright}(b)l_{\vartriangleright}(a)v=l_{\vartriangleright}(a\circ b-b \circ a)v=r_{\vartriangleleft}(b)l_{\vartriangleright}(a)v-r_{\vartriangleleft}(a)l_{\vartriangleright}(b)v.
\end{eqnarray*}
Then we get
\begin{eqnarray}
\label{dual-resp-1}(l_{\vartriangleright}+r_{\vartriangleleft})(a)l_{\vartriangleright}(b)v=(l_{\vartriangleright}+r_{\vartriangleleft})(b)l_{\vartriangleright}(a)v.
\end{eqnarray}

By Eqs. \eqref{rp2}, \eqref{rp7} and \eqref{rp10}, one obtains
\begin{eqnarray*}
&&l_{\vartriangleright}(a)l_{\vartriangleleft}(b)v-l_{\vartriangleleft}(b)l_{\vartriangleright}(a)v\\
&&\;\;=l_{\vartriangleleft}(a\vartriangleright b-b\vartriangleleft a)v+l_{\vartriangleleft}(b)l_{\vartriangleleft}(a)v\\
&&\;\;=r_{\vartriangleright}(b)(l_{\vartriangleright}+l_{\vartriangleleft})(a)v-r_{\vartriangleleft}(a)l_{\vartriangleleft}(b)v
+l_{\vartriangleleft}(b)l_{\vartriangleleft}(a)v.
\end{eqnarray*}
Then we have  \begin{eqnarray}
\label{dual-resp-2}(r_{\vartriangleright}+l_{\vartriangleleft})(b)(l_{\vartriangleright}+l_{\vartriangleleft})(a)v
=(l_{\vartriangleright}+r_{\vartriangleleft})(a)l_{\vartriangleleft}(b)v.
 \end{eqnarray}
By Eqs. \eqref{dual-resp-1} and \eqref{dual-resp-2}, one gets
\begin{flalign*}
(l_{\vartriangleright}+r_{\vartriangleleft}+r_{\vartriangleright}+l_{\vartriangleleft})(b)(l_{\vartriangleright}+l_{\vartriangleleft})(a)v
=(l_{\vartriangleright}+r_{\vartriangleleft}+r_{\vartriangleright}+l_{\vartriangleleft})(a)(l_{\vartriangleright}+l_{\vartriangleleft})(b)v.
\end{flalign*}

By Eqs. \eqref{rp3}, \eqref{rp6} and \eqref{rp7}, we have
\begin{flalign*}
r_{\vartriangleright}(a\vartriangleright b)v&=r_{\vartriangleright}(b)(r_{\vartriangleright}+r_{\vartriangleleft})(a)v+
l_{\vartriangleright}(a)r_{\vartriangleright}(b)v-r_{\vartriangleright}(b)(l_{\vartriangleright}+l_{\vartriangleleft})(a)v\\
&=(l_{\vartriangleright}+r_{\vartriangleleft})(a)r_{\vartriangleright}(b)v-l_{\vartriangleleft}(a\vartriangleright b)v.
\end{flalign*}
Then we obtain \begin{eqnarray}
\label{dual-resp-3}(r_{\vartriangleright}+l_{\vartriangleleft})(a\vartriangleright b)v= (l_{\vartriangleright}+r_{\vartriangleleft})(a)r_{\vartriangleright}(b)v.
\end{eqnarray}

By Eqs. \eqref{rp4}, \eqref{rp6} and \eqref{rp10}, one obtains
\begin{flalign*}
r_{\vartriangleright}(a\vartriangleleft b)v&=r_{\vartriangleleft}(b)r_{\vartriangleright}(a)v+l_{\vartriangleleft}(a)(r_{\vartriangleright}+r_{\vartriangleleft})(b)v
-r_{\vartriangleleft}(b)l_{\vartriangleleft}(a)v\\
&=r_{\vartriangleright}(a)(r_{\vartriangleright}+r_{\vartriangleleft})(b)v+l_{\vartriangleleft}(a)(r_{\vartriangleright}
+r_{\vartriangleleft})(b)v-l_{\vartriangleleft}(a\vartriangleleft b)v.
\end{flalign*}
Then we get
 \begin{eqnarray}
\label{dual-resp-4} (r_{\vartriangleright}+l_{\vartriangleleft})(a\vartriangleleft b)v = (r_{\vartriangleright}+l_{\vartriangleleft})(a)(r_{\vartriangleright}+r_{\vartriangleleft})(b)v.
 \end{eqnarray}

By Eqs. \eqref{rp5}, \eqref{rp8} and \eqref{rp9}, we have
\begin{eqnarray}
\label{dual-resp-5} (l_{\vartriangleright}+r_{\vartriangleleft})(a \circ b)v=(l_{\vartriangleright}+r_{\vartriangleleft})(a)r_{\vartriangleleft}(b)v,
\end{eqnarray}
Then by Eqs. \eqref{dual-resp-3}, \eqref{dual-resp-4} and \eqref{dual-resp-5}, one gets
\begin{flalign*}
(r_{\vartriangleright}+l_{\vartriangleleft}+l_{\vartriangleright}+r_{\vartriangleleft})(a \circ b)v=(r_{\vartriangleright}+l_{\vartriangleleft}+l_{\vartriangleright}+r_{\vartriangleleft})(a)(r_{\vartriangleright}
+r_{\vartriangleleft})(b)v.
\end{flalign*}

Therefore one obtains
\begin{flalign*}
&(l_{\vartriangleright}+l_{\vartriangleleft}+r_{\vartriangleright}+r_{\vartriangleleft})(b)(l_{\vartriangleright}
+l_{\vartriangleleft}+r_{\vartriangleright}+r_{\vartriangleleft})(a)v\\
&\;\;\quad-(l_{\vartriangleright}+l_{\vartriangleleft}+r_{\vartriangleright}+r_{\vartriangleleft})(a)(l_{\vartriangleright}
+l_{\vartriangleleft}+r_{\vartriangleright}+r_{\vartriangleleft})(b)v\\
&\;\;\quad+(l_{\vartriangleright}+l_{\vartriangleleft}+r_{\vartriangleright}+r_{\vartriangleleft})(a\circ b-b \circ a)v\\
&\;\;=(l_{\vartriangleright}+r_{\vartriangleright}+l_{\vartriangleleft}+r_{\vartriangleleft})(b)(l_{\vartriangleright}+l_{\vartriangleleft})(a)v
-(l_{\vartriangleright}+r_{\vartriangleright}+l_{\vartriangleleft}+r_{\vartriangleleft})(a)(l_{\vartriangleright}+l_{\vartriangleleft})(b)v\\
&\;\;\quad+(l_{\vartriangleright}+r_{\vartriangleright}+l_{\vartriangleleft}+r_{\vartriangleleft})(b)(r_{\vartriangleright}+r_{\vartriangleleft})(a)v
-(l_{\vartriangleright}+r_{\vartriangleright}+l_{\vartriangleleft}+r_{\vartriangleleft})(b\circ a)v\\
&\;\;\quad+(l_{\vartriangleright}+r_{\vartriangleright}+l_{\vartriangleleft}+r_{\vartriangleleft})(a\circ b)v-(l_{\vartriangleright}+r_{\vartriangleright}+l_{\vartriangleleft}+r_{\vartriangleleft})(a)(r_{\vartriangleright}+r_{\vartriangleleft})(b)v\\
&\;\;=0.
\end{flalign*}
Hence Eq. \eqref{rp1} holds.

Similarly, one can check that Eqs. \eqref{rp2}-\eqref{rp10} hold. Then the proof is completed.
\end{proof}

We introduce the definition of $\mathcal{O}$-operators on a pre-Novikov algebra associated to a representation as follows.
\begin{definition}
Let $(A,\vartriangleleft,\vartriangleright)$ be a pre-Novikov algebra and $(V,l_{\vartriangleright},r_{\vartriangleright},l_{\vartriangleleft},r_{\vartriangleleft})$ be a representation of $(A,\vartriangleleft,\vartriangleright)$.  A linear map $T:V\rightarrow A$ is called an \textbf{$\mathcal{O}$-operator} on $(A,\vartriangleleft,\vartriangleright)$ associated to $(V,l_{\vartriangleright},r_{\vartriangleright},l_{\vartriangleleft},r_{\vartriangleleft})$ if $T$ satisfies
\begin{eqnarray}
&&T(u)\vartriangleright T(v)=T(l_{\vartriangleright}(T(u))v)+T(r_{\vartriangleright}(T(v))u),\\ &&T(u)\vartriangleleft T(v)=T(l_{\vartriangleleft}(T(u))v)+T(r_{\vartriangleleft}(T(v))u),\;\; u, v\in V.
\end{eqnarray}
\end{definition}

For a finite-dimensional vector space $A$, the natural isomorphism
\begin{eqnarray*}
A\otimes A\cong \text{Hom}_{\bf k}(A^\ast, A)
\end{eqnarray*}
identifies an $r\in A\otimes A$ with a linear map $T_r:A^*\rightarrow A$, which is defined by
\begin{flalign}
\langle f\otimes g,r\rangle=\langle f,T_r(g)\rangle, \quad  f, g\in A^*.\label{f1}
\end{flalign}

\begin{proposition} \label{p3}
Let $(A,\vartriangleleft,\vartriangleright)$ be a pre-Novikov algebra, $r\in A\otimes A$ be symmetric and $(A,\circ )$ be the associated Novikov algebra of  $(A,\vartriangleleft,\vartriangleright)$. Then $r$ is a solution of pre-Novikov Yang-Baxter equation in $(A,\vartriangleleft,\vartriangleright)$ if and only if $T_r$ is an $\mathcal{O}$-operator on $(A,\circ )$ associated to $(A^*,L_{\vartriangleright}^*+R_{\vartriangleleft}^*,-R_{\vartriangleleft}^*)$.
\end{proposition}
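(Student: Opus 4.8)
The plan is to translate the $\mathcal{O}$-operator identity, which lives in $A$, into the vanishing of the element $r_{12}\circ r_{13}+r_{23}\odot r_{13}-r_{12}\vartriangleleft r_{23}$ of $A\otimes A\otimes A$, by pairing everything against covectors. Write $r=\sum_i x_i\otimes y_i$. Two elementary facts will be used throughout: first, since $r$ is symmetric, $\sum_i x_i\otimes y_i=\sum_i y_i\otimes x_i$, so inside any sum one may interchange the roles of $x_i$ and $y_i$; second, symmetry makes $T_r$ self-adjoint, i.e. $\langle f,T_r(g)\rangle=\langle g,T_r(f)\rangle$ for all $f,g\in A^*$, which follows at once from Eq. \eqref{f1} together with $r=\tau r$. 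Expanding Eq. \eqref{f1} also gives the explicit form $T_r(g)=\sum_i\langle g,y_i\rangle x_i$, which is what makes the index bookkeeping below mechanical.

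Fix $f,g\in A^*$ and pair the would-be $\mathcal{O}$-operator expression
\[
T_r(f)\circ T_r(g)-T_r\big((L_{\vartriangleright}^*+R_{\vartriangleleft}^*)(T_r(f))g\big)-T_r\big(-R_{\vartriangleleft}^*(T_r(g))f\big)
\]
against an arbitrary $h\in A^*$, and compute the three resulting scalars separately. For the first, expanding $T_r(f)$ and $T_r(g)$ directly yields $\langle h,T_r(f)\circ T_r(g)\rangle=\langle h\otimes f\otimes g,\,r_{12}\circ r_{13}\rangle$ with no reshuffling needed. For the second and third, I would first move $T_r$ across the pairing via self-adjointness, then unfold the dual actions $L_{\vartriangleright}^*,R_{\vartriangleleft}^*$ using their defining sign in Eq. \eqref{e14}; this turns the second term into $-\langle g,\,T_r(f)\odot T_r(h)\rangle$ and the third into $\langle f,\,T_r(h)\vartriangleleft T_r(g)\rangle$. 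Expanding these and reshuffling indices by the symmetry of $r$ then identifies them with $-\langle h\otimes f\otimes g,\,r_{23}\odot r_{13}\rangle$ and $\langle h\otimes f\otimes g,\,r_{12}\vartriangleleft r_{23}\rangle$ respectively.

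Assembling the three contributions with the signs dictated by the $\mathcal{O}$-operator identity yields exactly
\[
\langle h\otimes f\otimes g,\; r_{12}\circ r_{13}+r_{23}\odot r_{13}-r_{12}\vartriangleleft r_{23}\rangle .
\]
Since $f,g,h$ range over all of $A^*$ and the pairing between $A^{\otimes 3}$ and $(A^*)^{\otimes 3}$ is nondegenerate, the $\mathcal{O}$-operator identity holds for all $f,g$ if and only if the pre-Novikov Yang–Baxter element vanishes. Both directions of the equivalence come from this single identity: nondegeneracy of the pairing on $A$ handles one direction (the expression in $A$ is determined by its pairings with all $h$), and nondegeneracy on $A^{\otimes 3}$ the other.

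The only real friction I expect is the index bookkeeping in the second and third terms: after unfolding the dual actions, the raw pairings do not match the tensor components of $r_{23}\odot r_{13}$ and $r_{12}\vartriangleleft r_{23}$ on the nose, and it is precisely the symmetry $\sum_i x_i\otimes y_i=\sum_i y_i\otimes x_i$ that is needed to place each $x_i$ and $y_i$ into the correct leg. Keeping the three tensor slots consistently assigned to $h,f,g$ in that order throughout, and tracking the sign coming from $\rho^*$ in Eq. \eqref{e14}, is the part that must be done with care; everything else is routine expansion.
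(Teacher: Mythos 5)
Your proposal is correct — the three identifications all hold with the stated signs ($\langle h, T_r(f)\circ T_r(g)\rangle = \langle h\otimes f\otimes g, r_{12}\circ r_{13}\rangle$, the second term equal to $-\langle g, T_r(f)\odot T_r(h)\rangle = -\langle h\otimes f\otimes g, r_{23}\odot r_{13}\rangle$ after using symmetry of $r$ to swap legs, and the third equal to $\langle f, T_r(h)\vartriangleleft T_r(g)\rangle = \langle h\otimes f\otimes g, r_{12}\vartriangleleft r_{23}\rangle$) — and it is essentially the paper's argument: the paper fixes a basis, expands $r$, $\vartriangleright$, $\vartriangleleft$ in structure constants $a_{ij}$, $b_{ij}^k$, $c_{ij}^k$, and matches the coefficient of $e_k$ in the $\mathcal{O}$-operator identity applied to $(e_i^*,e_j^*)$ against the coefficient of $e_k\otimes e_i\otimes e_j$ in $r_{12}\circ r_{13}+r_{23}\odot r_{13}-r_{12}\vartriangleleft r_{23}$, which is precisely your pairing $\langle h\otimes f\otimes g,\cdot\rangle$ evaluated on dual basis vectors. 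Your coordinate-free use of the self-adjointness of $T_r$ and of Eq.~\eqref{e14} is a cleaner packaging of the same term-by-term comparison, with the symmetry of $r$ playing the identical role that $a_{ij}=a_{ji}$ plays in the paper.
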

\begin{proof}
Let $\{e_1,e_2,\cdot\cdot\cdot,e_n\}$ be a basis of $A$ and $\{e_1^*,e_2^*,\cdot\cdot\cdot,e_n^*\}$ be the dual basis of $A^\ast$. Assume that $e_i\vartriangleright e_j=\sum\limits_{k=1}^nb_{ij}^ke_k$, $e_i\vartriangleleft e_j=\sum\limits_{k=1}^nc_{ij}^ke_k$ and $r=\sum\limits_{i=1}^n\sum\limits_{j=1}^na_{ij}e_i\otimes e_j$, where $b_{ij}^k$, $c_{ij}^k$, $a_{ij}\in {\bf k}$ and $a_{ij}=a_{ji}$. By Eq. \eqref{f1}, we have $T_r(e_i^*)=\sum\limits_{k=1}^na_{ik}e_k$. Hence we obtain
$$L_{\vartriangleright}^*(T_r(e_i^*))e_j^*=-\sum\limits_{t=1}^n\sum\limits_{l=1}^na_{it}b_{tl}^je_l^*,\quad L_{\vartriangleleft}^*(T_r(e_i^*))e_j^*=-\sum\limits_{t=1}^n\sum\limits_{l=1}^na_{it}c_{tl}^je_l^*,$$
$$R_{\vartriangleright}^*(T_r(e_i^*))e_j^*=-\sum\limits_{t=1}^n\sum\limits_{l=1}^na_{it}b_{lt}^je_l^*,\quad R_{\vartriangleleft}^*(T_r(e_i^*))e_j^*=-\sum\limits_{t=1}^n\sum\limits_{l=1}^na_{it}c_{lt}^je_l^*.$$
Then the coefficient of $e_k$ in
\begin{align*}
T_r(e_i^*)\circ T_r(e_j^*)-T_r((L_{\vartriangleright}^*+R_{\vartriangleleft}^*)(T_r(e_i^*))e_j^*-R_{\vartriangleleft}^*(T_r(e_j^*))e_i^*)
\end{align*}
is
\begin{align*}
\sum\limits_{t=1}^n\sum\limits_{l=1}^n\big(a_{it}a_{jl}(b_{tl}^k+c_{tl}^k)+a_{it}a_{lk}(b_{tl}^j+c_{lt}^j)-a_{jt}a_{lk}c_{lt}^i\big).
\end{align*}
Hence $T_r$ is an $\mathcal{O}$-operator on $(A,\circ )$ associated to $(A^*,L_{\vartriangleright}^*+R_{\vartriangleleft}^*,-R_{\vartriangleleft}^*)$ if and only if
\begin{eqnarray}
\label{o-op1}\sum\limits_{t=1}^n\sum\limits_{l=1}^n\big(a_{it}a_{jl}(b_{tl}^k+c_{tl}^k)+a_{it}a_{lk}(b_{tl}^j+c_{lt}^j)-a_{jt}a_{lk}c_{lt}^i\big)=0
\end{eqnarray}
holds for all $i$, $j\in \{1, \cdots, n\}$. The left-hand side of Eq. \eqref{o-op1} is just the coefficient of $e_k\otimes e_i\otimes e_j$ in
$r_{12}\circ r_{13}+r_{23}\odot r_{13}-r_{12}\vartriangleleft r_{23}$. Therefore this conclusion holds.
\end{proof}

\begin{proposition}\label{p4}
Let $(A,\vartriangleleft,\vartriangleright)$ be a pre-Novikov algebra, $r\in A\otimes A$ be symmetric and $(A,\circ )$ be the associated Novikov algebra of  $(A,\vartriangleleft,\vartriangleright)$. Then $T_r$ is an $\mathcal{O}$-operator on $(A,\vartriangleleft,\vartriangleright)$ associated to $(A^*,L_{\vartriangleright}^*+L_{\vartriangleleft}^*+R_{\vartriangleright}^*+R_{\vartriangleleft}^*,R_{\vartriangleright}^*,
-(R_{\vartriangleright}^*+L_{\vartriangleleft}^*),-(R_{\vartriangleright}^*+R_{\vartriangleleft}^*))$ if and only if the following equations hold:
\begin{align}
&r_{12}\vartriangleright r_{13}+r_{23}\star r_{13}+r_{12}\vartriangleright r_{23}=0, \label{e16} \\
&r_{12} \vartriangleleft r_{13}-r_{13} \odot r_{23}-r_{12}\circ r_{23}=0. \label{e17}
\end{align}
\end{proposition}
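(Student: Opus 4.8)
The plan is to argue exactly as in the proof of Proposition~\ref{p3}, by fixing a basis and comparing coefficients, the only new feature being that an $\mathcal{O}$-operator on a pre-Novikov algebra has \emph{two} defining identities (one for $\vartriangleright$ and one for $\vartriangleleft$), which must be matched against the two equations \eqref{e16} and \eqref{e17} separately. Concretely, fix a basis $\{e_1,\dots,e_n\}$ of $A$ with dual basis $\{e_1^*,\dots,e_n^*\}$, and write $e_i\vartriangleright e_j=\sum_k b_{ij}^k e_k$, $e_i\vartriangleleft e_j=\sum_k c_{ij}^k e_k$ and $r=\sum_{i,j}a_{ij}\,e_i\otimes e_j$ with $a_{ij}=a_{ji}$, so that $T_r(e_i^*)=\sum_k a_{ik}e_k$ by Eq.~\eqref{f1}. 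The four dual-operator formulas for $L_{\vartriangleright}^*(T_r(e_i^*))e_j^*$, $L_{\vartriangleleft}^*(T_r(e_i^*))e_j^*$, $R_{\vartriangleright}^*(T_r(e_i^*))e_j^*$ and $R_{\vartriangleleft}^*(T_r(e_i^*))e_j^*$ are already recorded in the proof of Proposition~\ref{p3}, and I would reuse those directly.

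With this data in hand, I would expand the first defining identity of the $\mathcal{O}$-operator,
\[
T_r(e_i^*)\vartriangleright T_r(e_j^*)=T_r\big((L_{\vartriangleright}^*+L_{\vartriangleleft}^*+R_{\vartriangleright}^*+R_{\vartriangleleft}^*)(T_r(e_i^*))e_j^*\big)+T_r\big(R_{\vartriangleright}^*(T_r(e_j^*))e_i^*\big),
\]
and read off the coefficient of $e_k$, which a short computation gives as
\[
\sum_{t,l}a_{it}a_{jl}b_{tl}^k+\sum_{t,l}a_{it}a_{lk}\big(b_{tl}^j+c_{tl}^j+b_{lt}^j+c_{lt}^j\big)+\sum_{t,l}a_{jt}a_{lk}b_{lt}^i .
\]
Using the mixed products $r_{ij}\ast r_{kl}$ introduced before Lemma~\ref{l3} together with the symmetry $a_{ij}=a_{ji}$, I would then identify these three sums as precisely the coefficient of $e_k\otimes e_i\otimes e_j$ in $r_{12}\vartriangleright r_{13}$, $r_{23}\star r_{13}$ and $r_{12}\vartriangleright r_{23}$, respectively. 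Hence the first identity, for all $i,j$, is equivalent to Eq.~\eqref{e16}.

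Similarly, expanding the second defining identity
\[
T_r(e_i^*)\vartriangleleft T_r(e_j^*)=T_r\big(-(R_{\vartriangleright}^*+L_{\vartriangleleft}^*)(T_r(e_i^*))e_j^*\big)+T_r\big(-(R_{\vartriangleright}^*+R_{\vartriangleleft}^*)(T_r(e_j^*))e_i^*\big),
\]
the coefficient of $e_k$ becomes
\[
\sum_{s,t}a_{is}a_{jt}c_{st}^k-\sum_{t,l}a_{it}a_{lk}\big(b_{lt}^j+c_{tl}^j\big)-\sum_{t,l}a_{jt}a_{lk}\big(b_{lt}^i+c_{lt}^i\big),
\]
which I would match, again via the symmetry of $r$, with the coefficient of $e_k\otimes e_i\otimes e_j$ in $r_{12}\vartriangleleft r_{13}$, $-\,r_{13}\odot r_{23}$ and $-\,r_{12}\circ r_{23}$. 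Thus the second identity is equivalent to Eq.~\eqref{e17}, and combining the two equivalences yields the Proposition. The main obstacle is purely one of bookkeeping: each of the mixed products $r_{23}\star r_{13}$, $r_{13}\odot r_{23}$ and $r_{12}\circ r_{23}$ distributes the two tensor legs of $r$ across different slots, so one must track these positions carefully and invoke $a_{ij}=a_{ji}$ at exactly the right places in order to realign the summation indices $t,l$ with the structure constants appearing in the $\mathcal{O}$-operator coefficients.
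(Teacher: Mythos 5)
Your proposal is correct and follows exactly the route the paper intends: the paper's own proof of this proposition is literally the one-line remark that it ``follows by a similar proof as that in Proposition \ref{p3},'' i.e.\ the same basis/structure-constant comparison, and you have carried that out faithfully, splitting the argument over the two defining identities of an $\mathcal{O}$-operator on a pre-Novikov algebra and matching them to Eqs.~\eqref{e16} and \eqref{e17}. I checked your coefficient computations (including the dual-operator formulas reused from Proposition \ref{p3} and the identification of each sum with the coefficient of $e_k\otimes e_i\otimes e_j$ in the corresponding mixed product, using $a_{ij}=a_{ji}$), and they are all accurate, so your write-up in fact supplies the details the paper omits.
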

\begin{proof}
It follows by a similar proof as that in Proposition \ref{p3}.
\end{proof}

\begin{theorem}\label{co2}
Let $(A,\vartriangleleft,\vartriangleright)$ be a pre-Novikov algebra, $r\in A\otimes A$ be symmetric and $(A,\circ )$ be the associated Novikov algebra of  $(A,\vartriangleleft,\vartriangleright)$. Then the following conditions are equivalent:
\begin{enumerate}
\item $r$ is a solution of pre-Novikov Yang-Baxter equation in $(A,\vartriangleleft,\vartriangleright)$;
\item $T_r$ is an $\mathcal{O}$-operator on $(A,\circ )$ associated to $(A^*,L_{\vartriangleright}^*+R_{\vartriangleleft}^*,-R_{\vartriangleleft}^*)$;
\item $T_r$ is an $\mathcal{O}$-operator on $(A,\vartriangleleft,\vartriangleright)$ associated to $(A^*,L_{\vartriangleright}^*+L_{\vartriangleleft}^*+R_{\vartriangleright}^*+R_{\vartriangleleft}^*,R_{\vartriangleright}^*,
-(R_{\vartriangleright}^*+L_{\vartriangleleft}^*),-(R_{\vartriangleright}^*+R_{\vartriangleleft}^*))$.
 \end{enumerate}
\end{theorem}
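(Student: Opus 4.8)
The plan is to obtain the whole equivalence from the two operator-form characterizations already in hand, Propositions \ref{p3} and \ref{p4}, supplemented by a short symmetry argument. The equivalence (1) $\Leftrightarrow$ (2) is precisely Proposition \ref{p3}, so no work is needed there. The remaining task is (1) $\Leftrightarrow$ (3). By Proposition \ref{p4}, condition (3) is equivalent to the simultaneous validity of Eqs. \eqref{e16} and \eqref{e17}; hence it suffices to prove that, for a symmetric $r$, the pre-Novikov Yang-Baxter equation \eqref{yb} holds if and only if both \eqref{e16} and \eqref{e17} hold.

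It is worth noting at the outset that one of the two implications connecting (3) and (2) is transparent: for the representation appearing in (3) one has $l_{\vartriangleright}+l_{\vartriangleleft}=L_{\vartriangleright}^*+R_{\vartriangleleft}^*$ and $r_{\vartriangleright}+r_{\vartriangleleft}=-R_{\vartriangleleft}^*$, so adding the two defining identities of an $\mathcal{O}$-operator on $(A,\vartriangleleft,\vartriangleright)$ immediately yields the defining identity of an $\mathcal{O}$-operator on $(A,\circ)$. Thus (3) $\Rightarrow$ (2) is automatic; the substance lies in the converse, i.e. in \emph{splitting} the single equation \eqref{yb} into the two equations \eqref{e16} and \eqref{e17}, and it is here that the symmetry of $r$ is essential.

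To carry out the splitting I would use the $S_3$-action on $A\otimes A\otimes A$ exactly as in the proof of Theorem \ref{t3}. Write $E$, $P$, $Q$ for the left-hand sides of \eqref{yb}, \eqref{e16}, \eqref{e17}, and let $\sigma=(1\,2)$ act on the three tensor legs. Using the symmetry $r=\tau r$ to identify $r_{ij}$ with $r_{ji}$ and to relabel the summation indices, together with the relations $a\circ b=a\vartriangleright b+a\vartriangleleft b$, $a\odot b=a\vartriangleright b+b\vartriangleleft a$ and the commutativity $a\star b=b\star a$, one computes term by term that $Q=-\sigma(E)$ and $P=E+\sigma(E)$. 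Granting these two identities the conclusion is immediate: if \eqref{yb} holds then $E=0$, so $\sigma(E)=0$ and therefore $P=0$ and $Q=0$; conversely \eqref{e17} alone forces $\sigma(E)=0$, whence $E=0$. Combining this with Proposition \ref{p4} gives (1) $\Leftrightarrow$ (3), which together with (1) $\Leftrightarrow$ (2) completes the proof.

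The only delicate point is the bookkeeping in establishing $Q=-\sigma(E)$ and $P=E+\sigma(E)$: in each product $r_{ab}\ast r_{cd}$ one must keep track of which tensor slot carries the multiplication and apply the symmetry of $r$ in the correct slot. This is entirely parallel to the manipulations already performed for $R_{12}$ and $R_{21}$ in the proof of Theorem \ref{t3}, so no new idea is required beyond careful index tracking.
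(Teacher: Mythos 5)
Your proposal is correct and takes essentially the same route as the paper: both reduce the theorem to Propositions \ref{p3} and \ref{p4} and then show that, for symmetric $r$, Eqs. \eqref{e16} and \eqref{e17} are equivalent to the pre-Novikov Yang-Baxter equation \eqref{yb} via the $S_3$-action and the symmetry of $r$, exactly as in the proof of Theorem \ref{t3}. Your explicit identities $Q=-\sigma(E)$ and $P=E+\sigma(E)$ (with $\sigma=(1\,2)$) are precisely the computations the paper invokes by citing Theorem \ref{t3}, where $P$ coincides with $R_{21}$ of Lemma \ref{l3}; your additional observation that (3) implies (2) by summing the two $\mathcal{O}$-operator identities is a nice shortcut but does not change the overall argument.
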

\begin{proof}
By a similar proof as that in Theorem \ref{t3}, Eq. \eqref{e17} holds if and only if Eq. \eqref{yb} holds.
Since $r$ is symmetric, Eq. \eqref{e16} holds if and only if $R_{21}$ holds, where $R_{21}$ is given in Lemma \ref{l3}. Then by Theorem \ref{t3}, Eq. \eqref{e16} holds if \eqref{yb} holds. Therefore, by Propositions \ref{p3} and \ref{p4}, we obtain the conclusion.
\end{proof}

\begin{theorem}\label{t2}
Let $(A,\vartriangleleft,\vartriangleright)$ be a pre-Novikov algebra and $(A,\circ )$ be the associated Novikov algebra of  $(A,\vartriangleleft,\vartriangleright)$. Let $(V,l_{\vartriangleright},r_{\vartriangleright},l_{\vartriangleleft},r_{\vartriangleleft})$ be a representation of $(A,\vartriangleleft,\vartriangleright)$. Let $T:V\rightarrow A$ be a linear map, which is identified with $r_T\in A\otimes V^\ast\subseteq (A\ltimes_{l_{\vartriangleright}^*+l_{\vartriangleleft}^*+r_{\vartriangleright}^*+r_{\vartriangleleft}^*,r_{\vartriangleright}^*,
-(r_{\vartriangleright}^*+l_{\vartriangleleft}^*),-(r_{\vartriangleright}^*+r_{\vartriangleleft}^*)}V^*)\otimes (A\ltimes_{l_{\vartriangleright}^*+l_{\vartriangleleft}^*+r_{\vartriangleright}^*+r_{\vartriangleleft}^*,r_{\vartriangleright}^*,
-(r_{\vartriangleright}^*+l_{\vartriangleleft}^*),-(r_{\vartriangleright}^*+r_{\vartriangleleft}^*)}V^*) $ through $\text{Hom}_{\bf k}(V, A)\cong A\otimes V^\ast$. Then $r=r_T+\tau (r_T)$ is a symmetric solution of pre-Novikov Yang-Baxter equation in $A\ltimes_{l_{\vartriangleright}^*+l_{\vartriangleleft}^*+r_{\vartriangleright}^*+r_{\vartriangleleft}^*,r_{\vartriangleright}^*,
-(r_{\vartriangleright}^*+l_{\vartriangleleft}^*),-(r_{\vartriangleright}^*+r_{\vartriangleleft}^*)}V^*$ if and only if $T$ is an $\mathcal{O}$-operator on $(A,\vartriangleleft,\vartriangleright)$ associated to $(V,l_{\vartriangleright},r_{\vartriangleright},l_{\vartriangleleft},r_{\vartriangleleft})$.
\end{theorem}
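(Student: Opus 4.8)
The plan is to unwind the pre-Novikov Yang-Baxter equation \eqref{yb} for $r=r_T+\tau(r_T)$ directly inside the semi-direct product pre-Novikov algebra $\widehat A:=A\ltimes V^\ast$ (equipped with the dual representation displayed in the statement), and to read off the $\mathcal O$-operator conditions by grading $\widehat A$ into its two summands $A$ and $V^\ast$. First I would fix a basis $\{e_i\}$ of $V$ with dual basis $\{e_i^\ast\}\subseteq V^\ast$, so that under $\text{Hom}_{\bf k}(V,A)\cong A\otimes V^\ast$ we have $r_T=\sum_i T(e_i)\otimes e_i^\ast$ and hence
\[
r=r_T+\tau(r_T)=\sum_i\big(T(e_i)\otimes e_i^\ast+e_i^\ast\otimes T(e_i)\big).
\]
This expression is manifestly symmetric, so the word ``symmetric'' in the statement is automatic and only the equivalence ``$r$ solves \eqref{yb}'' $\Longleftrightarrow$ ``$T$ is an $\mathcal O$-operator'' remains.

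The key structural observation is that $V^\ast$ is an abelian ideal of $\widehat A$: every product $\vartriangleright,\vartriangleleft$ (hence $\circ,\odot,\star$) of two elements of $V^\ast$ vanishes, while a product of one element of $A$ with one of $V^\ast$ lands in $V^\ast$ and is computed by the prescribed dual representation maps $\hat l_{\vartriangleright},\hat r_{\vartriangleright},\hat l_{\vartriangleleft},\hat r_{\vartriangleleft}$ on $V^\ast$. Substituting $r$ into the three terms $r_{12}\circ r_{13}$, $r_{23}\odot r_{13}$ and $r_{12}\vartriangleleft r_{23}$ of \eqref{yb} produces, for each term, a double sum over the two ``types'' $(A,V^\ast)$ and $(V^\ast,A)$ of the generic summand of $r$. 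Because each copy of $r$ carries exactly one $A$-leg and one $V^\ast$-leg, and one pair of legs is merged by the operation, every non-vanishing contribution carries precisely one $A$-leg and two $V^\ast$-legs. Thus the whole left-hand side of \eqref{yb} splits, according to the position of the $A$-leg, into exactly three graded components lying in $A\otimes V^\ast\otimes V^\ast$, $V^\ast\otimes A\otimes V^\ast$ and $V^\ast\otimes V^\ast\otimes A$; all other graded pieces (in particular everything with two $A$-legs or with a $V^\ast\cdot V^\ast$ product) vanish automatically.

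To evaluate each component I would contract its two $V^\ast$-legs against arbitrary $u,v\in V$ via the pairing, obtaining an element of $A$; the adjoint sign rule \eqref{e14} together with the explicit form of the dual representation then collapses the combinations of starred maps, the decisive cancellations being of the type $\hat l_{\vartriangleright}+\hat r_{\vartriangleleft}=l_{\vartriangleright}^\ast+l_{\vartriangleleft}^\ast$, $\hat r_{\vartriangleright}+\hat r_{\vartriangleleft}=-r_{\vartriangleleft}^\ast$ and $\hat l_{\vartriangleleft}=-(r_{\vartriangleright}^\ast+l_{\vartriangleleft}^\ast)$. After these simplifications the $V^\ast\otimes A\otimes V^\ast$ component becomes, up to sign, exactly
\[
T(u)\vartriangleleft T(v)-T(l_{\vartriangleleft}(T(u))v)-T(r_{\vartriangleleft}(T(v))u),
\]
the defect of the $\vartriangleleft$-part of the $\mathcal O$-operator equation, while each of the $A\otimes V^\ast\otimes V^\ast$ and $V^\ast\otimes V^\ast\otimes A$ components becomes the sum of the $\vartriangleright$- and $\vartriangleleft$-defects. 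Hence the vanishing of all three components is equivalent to the two $\mathcal O$-operator identities holding separately (the $\vartriangleleft$-identity from the middle component, the $\vartriangleright$-identity as the difference of the ``sum'' component and the $\vartriangleleft$-identity), and conversely those identities force every component to vanish; with the automatic symmetry this proves the theorem. The only real difficulty is the bookkeeping in the last two steps: tracking which leg lies in $A$ versus $V^\ast$, applying the correct semi-direct product action and its adjoint sign in each mixed product, and checking the star-map cancellations — routine but error-prone. (Alternatively one could invoke Proposition \ref{p3} and Theorem \ref{co2} applied to $\widehat A$, but the grading computation above is more transparent.)
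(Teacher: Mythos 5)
Your proposal is correct and follows essentially the same route as the paper's proof: both expand $r=r_T+\tau(r_T)$ over a basis of $V$ inside the semi-direct product, use the sign rule \eqref{e14} to convert the dual-representation actions (via the cancellations $\hat l_{\vartriangleright}+\hat r_{\vartriangleleft}=l_{\vartriangleright}^*+l_{\vartriangleleft}^*$, $\hat r_{\vartriangleright}+\hat r_{\vartriangleleft}=-r_{\vartriangleleft}^*$, etc.), and split the Yang--Baxter expression by the position of the $A$-leg into three components, which are exactly the paper's Eqs.~\eqref{o1}--\eqref{o3} and whose simultaneous vanishing is equivalent to the two $\mathcal{O}$-operator identities. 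The only difference is presentational: you organize the bookkeeping via the grading/abelian-ideal structure of $V^*$ and contract against arbitrary $u,v\in V$, whereas the paper carries explicit basis indices throughout.
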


\begin{proof}
Let $\{v_1,v_2,\cdot\cdot\cdot,v_n\}$ be a basis of $V$ and $\{ v^*_1, v^*_2,\cdot\cdot\cdot,v^*_n\}$ be the dual basis of $V^\ast$. Then we have
\begin{flalign*}
r_T&=\sum\limits_{i=1}^nT(v_i)\otimes v^*_i\in T(V)\otimes V^* \subset(A\oplus V^*)\otimes(A\oplus V^*),\\
r&=r_T+\tau (r_T)=\sum\limits_{i=1}^n(T(v_i)\otimes v^*_i+v^*_i\otimes T(v_i)).
\end{flalign*}
Therefore, we have
\begin{flalign*}
r_{12}\circ r_{13}&=\sum\limits_{i,j=1}^n\big(T(v_i)\circ T(v_j)\otimes v^*_i\otimes v^*_j+(l^*_{\vartriangleright}+r^*_{\vartriangleleft})(T(v_i))v^*_j\otimes v^*_i\otimes T(v_j)\\
&\quad-r_{\vartriangleleft}^*(T(v_j))v^*_i \otimes T( v_i) \otimes  v^*_j\big).
\end{flalign*}
By Eq. \eqref{e14}, we have
\begin{flalign*}
l^*_{\vartriangleright}(T(v_i))v^*_j=-\sum\limits_{k=1}^nv^*_j(l_{\vartriangleright}(T(v_i))v_k)v^*_k,\quad r^*_{\vartriangleright}(T(v_i))v^*_j=-\sum\limits_{k=1}^nv^*_j(r_{\vartriangleright}(T(v_i))v_k)v^*_k,\\
l^*_{\vartriangleleft}(T(v_i))v^*_j=-\sum\limits_{k=1}^nv^*_j(l_{\vartriangleleft}(T(v_i))v_k)v^*_k,\quad
r^*_{\vartriangleleft}(T(v_i))v^*_j=-\sum\limits_{k=1}^nv^*_j(r_{\vartriangleleft}(T(v_i))v_k)v^*_k.
\end{flalign*}
Hence we obtain
\begin{flalign*}
\sum\limits_{i=1}^n\sum\limits_{j=1}^n(l^*_{\vartriangleright}+r^*_{\vartriangleleft})(T(v_i))v^*_j\otimes v^*_i\otimes T(v_j)&=-\sum\limits_{i=1}^n\sum\limits_{j=1}^nv^*_j\otimes v^*_i\otimes T((l_{\vartriangleright}+r_{\vartriangleleft})(T(v_i))v_j),\\
\sum\limits_{i=1}^n\sum\limits_{j=1}^nr_{\vartriangleleft}^*(T(v_j))v^*_i \otimes T( v_i) \otimes  v^*_j&=-\sum\limits_{i=1}^n\sum\limits_{j=1}^nv^*_i \otimes T(r_{\vartriangleleft}(T(v_j)) v_i) \otimes  v^*_j.
\end{flalign*}
Then one gets
\begin{flalign*}
r_{12}\circ r_{13}&=\sum\limits_{i=1}^n\sum\limits_{j=1}^n\big(T(v_i)\circ T(v_j)\otimes v^*_i\otimes v^*_j-v^*_j\otimes v^*_i\otimes T((l_{\vartriangleright}+r_{\vartriangleleft})(T(v_i))v_j)\\
&\quad+v^*_i \otimes T(r_{\vartriangleleft}(T(v_j)) v_i) \otimes  v^*_j\big).
\end{flalign*}

Similarly, we obtain
\begin{flalign*}
r_{23} \vartriangleright r_{13}&=\sum\limits_{i=1}^n\sum\limits_{j=1}^n\big( v^*_j\otimes T(v_i) \otimes r_{\vartriangleright}^*(T(v_j))v^*_i+T(v_j)\otimes v^*_i\otimes (l_{\vartriangleright}^*+l_{\vartriangleleft}^*+r_{\vartriangleright}^*+r_{\vartriangleleft}^*)(T(v_i))v^*_j\\
&\quad+v^*_j \otimes v^*_i \otimes T(v_i)\vartriangleright T(v_j)\big)\\
&=\sum\limits_{i=1}^n\sum\limits_{j=1}^n\big( -v^*_j\otimes T( r_{\vartriangleright}(T(v_j))v_i) \otimes v^*_i-T((l_{\vartriangleright}+l_{\vartriangleleft}+r_{\vartriangleright}+r_{\vartriangleleft})(T(v_i))v_j)\otimes v^*_i\otimes v^*_j\\
&\quad+v^*_j \otimes v^*_i \otimes T(v_i)\vartriangleright T(v_j)\big),\\
r_{13}\vartriangleleft r_{23}&=\sum\limits_{i=1}^n\sum\limits_{j=1}^n\big(T(v_i)\otimes v^*_j \otimes (-r_{\vartriangleright}^*-r_{\vartriangleleft}^*)(T(v_j))v^*_i+ v^*_i \otimes T(v_j)\otimes(-r_{\vartriangleright}^*-l_{\vartriangleleft}^*)(T(v_i)) v^*_j\\
& \quad+v^*_i\otimes v^*_j\otimes T(v_i)\vartriangleleft T(v_j)\big)\\
&=\sum\limits_{i=1}^n\sum\limits_{j=1}^n\big(T((r_{\vartriangleright}+r_{\vartriangleleft})(T(v_j))v_i)\otimes v^*_j \otimes v^*_i+ v^*_i \otimes T((r_{\vartriangleright}+l_{\vartriangleleft})(T(v_i))v_j)\otimes v^*_j\\
& \quad+v^*_i\otimes v^*_j\otimes T(v_i)\vartriangleleft T(v_j)\big),\\
-r_{12}\vartriangleleft r_{23}&=-\sum\limits_{i=1}^n\sum\limits_{j=1}^n\big( T(v_i)\otimes (-r_{\vartriangleright}^*-r_{\vartriangleleft}^*)(T(v_j))v^*_i \otimes v^*_j+v^*_i \otimes  T(v_i)\vartriangleleft T(v_j)\otimes v^*_j \\
&\quad+v^*_i\otimes(-r_{\vartriangleright}^*-l_{\vartriangleleft}^*)(T(v_i)) v^*_j\otimes T(v_j) \big)\\
&=\sum\limits_{i=1}^n\sum\limits_{j=1}^n\big( T((-r_{\vartriangleright}-r_{\vartriangleleft})(T(v_j))v_i)\otimes v^*_i \otimes v^*_j-v^*_i \otimes  T(v_i)\vartriangleleft T(v_j)\otimes v^*_j \\
&\quad+v^*_i\otimes v^*_j\otimes T((-r_{\vartriangleright}-l_{\vartriangleleft})(T(v_i)) v_j) \big).
\end{flalign*}
Therefore, $r=r_T+\tau (r_T)$ is a symmetric solution of pre-Novikov Yang-Baxter equation in $A\ltimes_{l_{\vartriangleright}^*+l_{\vartriangleleft}^*+r_{\vartriangleright}^*+r_{\vartriangleleft}^*,r_{\vartriangleright}^*,
-(r_{\vartriangleright}^*+l_{\vartriangleleft}^*),-(r_{\vartriangleright}^*+r_{\vartriangleleft}^*)}V^*$ if and only if
\begin{eqnarray}
&&\label{o1}T(v_i)\circ T(v_j)=T((l_{\vartriangleright}+l_{\vartriangleleft})(T(v_i))v_j)+T((r_{\vartriangleright}+r_{\vartriangleleft})(T(v_j))v_i),\\
&&\label{o2}T(v_i)\vartriangleleft T(v_j)=T(l_{\vartriangleleft}(T(v_i))v_j)+T(r_{\vartriangleleft}(T(v_j))v_i),\\
&&\label{o3}T(v_i)\vartriangleright T(v_j)+T(v_j)\vartriangleleft T(v_i)=T((l_{\vartriangleright}+r_{\vartriangleleft})(T(v_i))v_j)+T((r_{\vartriangleright}+l_{\vartriangleleft})(T(v_j))v_i),
\end{eqnarray}
hold for all $i$, $j\in \{1, \cdots, n\}$. It is easy to see that Eqs. \eqref{o1}-\eqref{o3} hold for all $i$, $j\in \{1, \cdots, n\}$ if and only if $T$ is an $\mathcal{O}$-operator on $(A,\vartriangleleft,\vartriangleright)$ associated to $(V,l_{\vartriangleright},r_{\vartriangleright},l_{\vartriangleleft},r_{\vartriangleleft})$.
Then the proof is completed.\end{proof}

Finally, we present an example to construct pre-Novikov bialgebras by $\mathcal{O}$-operators.

\begin{example}\label{ex2}
Let $(A=\mathbf{k}e_1\oplus \mathbf{k}e_2,\vartriangleleft,\vartriangleright)$ be the pre-Novikov algebra given in Example \ref{ex1}.
Let $T: A\rightarrow A$ be a linear map given by
$$T(e_1)=e_2,\quad T(e_2)=0.$$
Then it is easy to check that $T$ is an $\mathcal{O}$-operator on $(A,\vartriangleleft,\vartriangleright)$ associated to $(A,L_{\vartriangleright},R_{\vartriangleright},L_{\vartriangleleft},R_{\vartriangleleft})$.

Denote the semi-direct product of $(A, \vartriangleleft,\vartriangleright)$ and its  representation $(A^*,L_{\vartriangleright}^*+L_{\vartriangleleft}^*+R_{\vartriangleright}^*+R_{\vartriangleleft}^*,R_{\vartriangleright}^*,$ $
-(R_{\vartriangleright}^*+L_{\vartriangleleft}^*),-(R_{\vartriangleright}^*+R_{\vartriangleleft}^*))$
by $(B,\vartriangleleft,\vartriangleright)$. In fact, $B$ is the vector space $\mathbf{k}e_1\oplus\mathbf{k}e_2\oplus\mathbf{k}e_1^*\oplus\mathbf{k}e_2^*$ endowed with two binary operations $\vartriangleright,\vartriangleleft$ defined by non-zero products:
\begin{flalign*}
&e_1\vartriangleleft e_1=e_1,\quad e_1\vartriangleleft e_2= e_2\vartriangleleft e_1=e_2,\\
&e_1\vartriangleright e_1^*=e_2\vartriangleright e_2^*=-2e_1^*,\qquad e_1\vartriangleright e_2^*=-2e_2^*,\\
&e_1\vartriangleleft e_1^*=e_2\vartriangleleft e_2^*=e_1^*\vartriangleleft e_1=e_2^*\vartriangleleft e_2=e_1^*,\quad e_1\vartriangleleft e_2^*=e_2^*\vartriangleleft e_1=e_2^*.
\end{flalign*}

By Theorem \ref{t2}, $r= \sum\limits_{i=1}^2 \sum\limits_{j=1}^2(T(e_i)\otimes e_i^*+ e_i^*\otimes T(e_i))=e_2\otimes e_1^*+e_1^*\otimes e_2$ is a symmetric solution of pre-Novikov Yang-Baxter equation in the pre-Novikov algebra $(B,\vartriangleleft,\vartriangleright)$. Let $(B, \circ)$ be the associated Novikov algebra of $(B,\vartriangleleft,\vartriangleright)$.
Then by Theorem \ref{t3}, $(B,\vartriangleleft,\vartriangleright,\alpha,\beta)$ is a pre-Novikov bialgebra where $\alpha,\beta :B \rightarrow B\otimes B$ are linear maps given by
\begin{flalign*}
\alpha(a):&=(L_{\circ}(a)\otimes \id+\id \otimes(L_{\vartriangleright}+R_{\vartriangleleft})(a))\tau r\\
&=a\circ e_2\otimes e_1^*+ e_2\otimes(a\vartriangleright e_1^*+e_1^*\vartriangleleft a) +a\circ e_1^* \otimes e_2+ e_1^*\otimes(a\vartriangleright e_2+e_2\vartriangleleft a),\\
\beta(a):&=-(L_{\vartriangleright}(a)\otimes \id+\id \otimes (L_{\circ}+R_{\circ})(a))r\\
&=-a\vartriangleright e_2\otimes e_1^*- e_2\otimes(a\circ e_1^*+e_1^*\circ a) -a\vartriangleright e_1^* \otimes e_2- e_1^*\otimes(a\circ e_2+e_2\circ a),\;\;a\in B.
\end{flalign*}
Explicitly,
$\alpha(e_2^*)= 2e_1^*\otimes e_1^*,\beta(e_1)=-e_2\otimes e_1^*$, and the others are zero.
\end{example}

\noindent{\bf Acknowledgments.} This research is supported by
NSFC (12171129).

\smallskip

\noindent
{\bf Declaration of interests. } The authors have no conflicts of interest to disclose.

\smallskip

\noindent
{\bf Data availability. } Data sharing is not applicable to this article as no new data were created or analyzed in this study.

\vspace{-.5cm}

\end{document}